\def\a={{\buildrel a \over =}}
\def\na={{\buildrel a \over \neq}}
\def\PP{{\mathbb P}}
\def\Tcal{{\mathcal T}}
\def\C{{\mathcal C}}
\def\S{{\mathcal S}}
\def\tH{\tilde{H}}
\def\tT{\tilde{\mathcal T}}
\def\bM{\bar{M}}
\def\Lg{\Lambda}
\def\tb{T_b}
\newtheorem{theorem}{Theorem}[section]
\newtheorem{lemma}[theorem]{Lemma}
\newtheorem{proposition}[theorem]{Proposition}
\newtheorem{corollary}[theorem]{Corollary}
\newtheorem{definition-lemma}[theorem]{Definition-Lemma}
\theoremstyle{definition}
\newtheorem{definition}[theorem]{\bf Definition}
\newtheorem{example}[theorem]{\bf Example}
\theoremstyle{remark}
\newtheorem{remark}[theorem]{\bf Remark}
\def\vandaag{\number\day\space\ifcase\month\or
 januari\or februari\or  maart\or  april\or mei\or juni\or  juli\or
 augustus\or  september\or  oktober\or november\or  december\or\fi,
\number\year}
\def\today{\ifcase\month\or
 Jan\or Febr\or  Mar\or  Apr\or May\or Jun\or  Jul\or
 Aug\or  Sep\or  Oct\or Nov\or  Dec\or\fi
 \space\number\day, \number\year}
\begin{document}

\title[Rational Correspondences between Moduli Spaces of Curves]
{Rational Correspondences between Moduli Spaces \\
of Curves defined by Hurwitz Spaces}
\author{Gerard van der Geer $\,$}
\address{Korteweg-de Vries Instituut, Universiteit van
Amsterdam, Postbus 94248, 1090 GE Amsterdam,  The Netherlands}
\email{G.B.M.vanderGeer@uva.nl}
\author{$\,$ Alexis Kouvidakis}
\address{Department of Mathematics, University of Crete,
GR-71409 Heraklion, Greece}
\email{kouvid@math.uoc.gr}
\subjclass{14C25,14H40}
\begin{abstract} By associating to a curve $C$ and a $g^1_{d}$ the so-called
trace curve and reduced trace curve
we define two rational maps $\phi$ and $\hat{\phi}$ 
from the Hurwitz space of admissible covers
of genus $g=2k$ and degree $d=k+1$ to moduli spaces $\bM_{g'}$ and 
$\bM_{\hat{g}}$. We study the induced map of the 
divisor class group of $\bM_{g'}$
and $\bM_{\hat{g}}$ to the divisor class group of $\bM_{g}$.
\end{abstract}
\maketitle
\begin{section}{Introduction}\label{sec:intro}
Hurwitz spaces of admissible covers give rise to maps and
correspondences between moduli spaces of curves. In this paper
we study two examples of this. The Hurwitz space $\bar{H}_{g,d}$
in question is the space of admissible covers of even genus $g=2k$ 
and degree $d=k+1$. The general curve of genus $g=2k$ possesses
finitely many linear systems of projective dimension $1$ and degree
$d=k+1$. The Hurwitz space $\bar{H}_{g,d}$ is thus a generically finite
cover of the moduli space of stable curves $\bar{M}_g$.

In \cite{GF} Farkas constructed for odd $g=2k+1$ a rational map 
$\bar{M}_{g} \dashrightarrow \bar{M}_{g'}$ with 
$g'= 1+ {2k+2 \choose k}\frac {k}{k+1}$ by associating to a generic curve $C$
the curve $W^1_{k+2}$ in ${\rm Pic}^{k+2}(C)$ and calculated the induced action on the divisor class group. 
As an application he showed the
upper bound $\sigma(g) < 6 + 16/(g-1)$ for the slope $\sigma(g)$ of the movable
cone of $\bar{M}_g$ for odd genera $g$. 

In this paper we deal with the even genus case $g=2k$ 
and use a completely different construction to define a rational map.
To a general curve $C$ of genus $g$ together with a $g^1_d$, say $\gamma$, 
with $d=k+1$ we associate the so-called {\sl trace curve} $T=T_{C,\gamma}$ 
defined by
$$
T_{C,\gamma} = \{ (x,y) \in C \times C : \gamma \geq p+q \},
$$
the locus of ordered pairs $(p,q)$ contained in the fibers of $\gamma$.
By extending this definition to a suitable open part of the Hurwitz space
we obtain a rational map $\phi: \bar{H}_{g,d} \dashrightarrow\bar{M}_{g'}$
with $g'=5k^2-4k+1$ and it fits into a diagram
\begin{displaymath}
\begin{xy}
\xymatrix{
\bar{H}_{2k,k+1}   \ar@{-->}[r]^{\phi} \ar[d]^{p} & \bar{M}_{g'}\\
\bar{M}_{2k} \\
}
\end{xy}
\end{displaymath}
Note that the ratio $g'/g$ for the genera of the trace curve and the original
curve is much lower than the ratio
in the construction of Farkas.

The main body of this paper is devoted to calculating the induced
action $p_* \phi^*$ on divisor class group of $\bM_{g'}$. 
The trace curve carries a natural involution and by dividing the
trace curve by it we obtain the reduced trace curve. This yields
a similar rational map $\hat{\phi} : \bar{H}_{2k,k+1} 
\dashrightarrow \bM_{\hat{g}}$ with $\hat{g}=(5k-2)(k-1)/2$ and we calculate the
induced map on the divisor class group.

The reduced trace curve has gonality $\leq k(k+1)/2$ and
carries a correspondence that gives rise to an endomorphism $e$ 
of its Jacobian satisfying $(e-1)(e+k-2)=0$.
It is an interesting question to determine further properties of trace curves.

As in the Farkas paper the map $p_* \phi^*$  sends the ample cone 
of $\bar{M}_{g'}$ to the movable cone of $\bar{M}_g$ and we obtain 
in this way a bound on the movable slope of the form $\sigma(g) < 6 + 20/g$
for $g$ even.
But, as we shall show, by viewing the Hurwitz space $\bar{H}_{2k,k+1}$ 
as a correspondence between $\bM_{g}$ and $\bM_{0,6k}$, with $\bM_{0,6k}$ the moduli space of 
stable $6k$-pointed rational curves, one can obtain the slightly better bound $\sigma(g) < 6+18/(g+2)$. 

Besides the rational maps $\phi$ and $\hat{\phi}$ defined 
by the trace curve and its quotient we
also have a rational map $\chi$ of $\bar{H}_{2k,k+1}$ 
to a moduli space of semi-abelian varieties defined by the Prym variety of
the trace curve over the reduced trace curve and a variant 
given by a quotient of the Jacobian of the reduced trace curve. 
These maps deserve further study.

Maps between moduli spaces, like the Torelli map and the Prym map,
can be important tools for a better understanding of moduli spaces.
Since the rational maps and correspondences constructed here 
involve the geometry of the algebraic curve in a natural way it 
is not unreasonable to expect the same for these correspondences.

\end{section}
\begin{section}{The Trace Curve of a $g^1_d$}
Let $C$ be a smooth projective curve of genus $g$ and let $\gamma$ be a
$g^1_d$, that is, a linear system of degree $d$ and projective dimension $1$. 
To the pair $(C,\gamma)$ one can associate an algebraic curve, 
called the {\sl trace curve} and defined by
$$
T_{\gamma} =T_{C,\gamma} :=\{ (p,q) \in C\times C\, :  \gamma \geq p+q \}.
$$
Here the notation $\gamma \geq p+q$ means that there is an effective divisor
in $\gamma$ containing the divisor $p+q$. In the following we shall assume 
that the linear system $\gamma$ is without base points. The trace curve can 
have singularities. More precisely we have the following result,
see Lemma 5.1 in \cite{GK}.

\begin{lemma}
For a base point free $\gamma $ the trace curve $T_{\gamma}$ is a smooth except
for possible singularities at points where both $p$ and $q$ are ramification points of
$\gamma$.  A ramification point $p$ of order $m$ of $\gamma$ gives rise
to an ordinary singular point $(p,p)$ of order $m-1$. 
A point $(p,q) \in T_{\gamma}$
with $p\neq q$ and $p$ and $q$ both simple ramification points is 
a simple node of $T_{\gamma}$.
\end{lemma}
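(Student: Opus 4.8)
The plan is to reduce everything to a local computation on $C\times C$ by means of the degree $d$ morphism $f\colon C\to\PP^1$ determined by the base point free system $\gamma$. The first step is the set-theoretic description of $T_\gamma$. Since $\gamma$ is base point free it equals $f^{\ast}|\Ocal_{\PP^1}(1)|$, so its members are precisely the fibres $f^{\ast}(t)=\sum_{p\in f^{-1}(t)}e_{p}\,p$ for $t\in\PP^1$, where $e_p$ is the ramification index of $f$ at $p$. Hence $\gamma\ge p+q$ holds exactly when $f(p)=f(q)$, together with the extra requirement $e_p\ge 2$ in the case $p=q$, i.e. that $p$ be a ramification point. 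Therefore
$$
T_\gamma=\overline{\{(p,q)\in C\times C : p\neq q,\ f(p)=f(q)\}}=\overline{(C\times_{\PP^1}C)\setminus\Delta},
$$
a reduced curve whose only points lying on the diagonal are the pairs $(p,p)$ with $p$ a ramification point of $\gamma$.

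Next I would pass to local normal forms. Fixing $(p_0,q_0)\in T_\gamma$, let $t$ be a local coordinate on $\PP^1$ centred at $f(p_0)=f(q_0)$, and (working over $\CC$) choose local coordinates $s$ at $p_0$ and $u$ at $q_0$ so that $t\circ f=s^{m}$ near $p_0$ and $t\circ f=u^{n}$ near $q_0$, where $m$ and $n$ are the ramification indices of $f$ at $p_0$ and $q_0$. In the bidisc with coordinates $(s,u)$ on $C\times C$ the fibre product $C\times_{\PP^1}C$ is then cut out by the single equation $s^{m}=u^{n}$.

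The third step is to read off the three assertions from this normal form. If $p_0\neq q_0$, the diagonal does not meet the bidisc, so locally $T_\gamma=\{\,s^{m}-u^{n}=0\,\}$; the partial derivatives $ms^{m-1}$ and $-nu^{n-1}$ of the defining equation both vanish at the origin if and only if $m\ge 2$ and $n\ge 2$, so $T_\gamma$ is smooth at $(p_0,q_0)$ unless $p_0$ and $q_0$ are both ramification points, and for $m=n=2$ the equation factors as $(s-u)(s+u)=0$, two smooth branches with distinct tangent lines, i.e. a simple node. If $p_0=q_0=p$, then $m=n\ge 2$ and $s^{m}-u^{m}=\prod_{\zeta^{m}=1}(s-\zeta u)$, so $C\times_{\PP^1}C$ is a union of $m$ distinct lines through the origin, one of them ($\zeta=1$) being the diagonal; since the diagonal branch is not in the closure of the off-diagonal locus, $T_\gamma$ is locally the union of the remaining $m-1$ distinct lines $s=\zeta u$ with $\zeta\neq 1$, hence an ordinary singular point of multiplicity $m-1$ (a smooth point when $m=2$).

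The computation itself is routine once the set-up is in place; the only point requiring genuine care is the behaviour along the diagonal, which also underlies the first step. I would check directly that for $p$ unramified $f$ is injective on a neighbourhood of $p$, so that no off-diagonal points of $C\times_{\PP^1}C$ accumulate at $(p,p)$ and hence $(p,p)\notin T_\gamma$, whereas for $p$ ramified of order $m\ge 2$ each of the $m-1$ lines $s=\zeta u$ with $\zeta\neq1$ consists of off-diagonal points tending to $(p,p)$; this is precisely what makes the removal of the diagonal branch in the third step legitimate.
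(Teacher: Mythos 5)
Your proof is correct: identifying $T_\gamma$ with the divisor $C\times_{\PP^1}C-\Delta$ (equivalently the closure of the off-diagonal locus of the fibre product) and reading off the singularities from the local normal form $s^m=u^n$, with the careful removal of the diagonal branch at $(p,p)$, is exactly the standard argument. The paper itself gives no proof here but cites Lemma 5.1 of \cite{GK}, and your computation is the expected one.
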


It follows from the above description of the trace curve that
if $(p,q)$ is a smooth point of $T_{\gamma}$,
then it is a ramification point of the first (resp.\ second) 
projection of $T_{\gamma}$ on $C$ if and only if $q$ 
(resp.\ $p$) is a ramification point of $\gamma$.

We recall the following lemma from our \cite{GK}, Lemma 5.2.

\begin{lemma}
Let $\gamma$ be a base point free $g^1_d$ with all branch points simple
except one with arbitrary ramification. Then $T_{\gamma}$ is irreducible.
\end{lemma}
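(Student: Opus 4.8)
The plan is to show irreducibility of $T_\gamma$ by a monodromy argument applied to the natural covering of $C$ obtained from the second projection. Consider the map $\pi_2 : T_\gamma \to C$ sending $(p,q)$ to $q$; away from the ramification locus of $\gamma$ this is a degree $d-1$ covering, since for a general $q \in C$ the unique divisor of $\gamma$ through $q$ has the form $q + p_1 + \cdots + p_{d-1}$ with the $p_i$ distinct, and the fibre of $\pi_2$ over $q$ is $\{(p_1,q), \dots, (p_{d-1},q)\}$. So $T_\gamma$ is irreducible if and only if the monodromy group of this degree $d-1$ cover, acting on the $d-1$ points $\{p_1,\dots,p_{d-1}\}$ in a general fibre of $\gamma$, is transitive. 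I would set this up by comparing with the monodromy of $\gamma$ itself, viewed as a degree $d$ cover $f : C \to \PP^1$, whose monodromy group $G \subseteq S_d$ is well understood under the branching hypotheses.

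The key steps, in order, are as follows. First, fix a general point $t_0 \in \PP^1$ and identify the fibre $f^{-1}(t_0) = \{q_1, \dots, q_d\}$; then the fibre of $\pi_2 : T_\gamma \to C$ over $q_i$ is naturally the set $f^{-1}(t_0) \setminus \{q_i\}$, so the cover $T_\gamma \to C$ is precisely the cover $C^{(2)} \to C$ parametrizing "the other sheets," i.e. it is classified by the action of $\pi_1(C \setminus (\text{branch locus pulled back}))$ on ordered pairs of distinct points of the fibre of $f$, restricted to the second coordinate. Second, recall (from Lemma 5.2 of \cite{GK}, which we may invoke, or re-derive) that under the hypothesis that $\gamma$ has all branch points simple except one of arbitrary ramification, the monodromy group $G$ of $f$ is the full symmetric group $S_d$: the simple branch points contribute transpositions, these generate a transitive subgroup, and a transitive subgroup of $S_d$ generated by transpositions is all of $S_d$. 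Third — the crux — translate transitivity of $\pi_1(T_\gamma)$ into a statement about the stabilizer $G_{q_i} = \mathrm{Stab}_{S_d}(i) \cong S_{d-1}$ acting on the remaining $d-1$ letters: the cover $T_\gamma \to C$ over the point $q_i$ has monodromy group equal to the image of $\pi_1(C, q_i)$ in $\mathrm{Sym}(f^{-1}(t_0)\setminus\{q_i\})$, and since $\pi_1(C,q_i)$ surjects onto $G_{q_i} = S_{d-1}$ acting naturally on $d-1$ letters (this uses that $C$ is connected, i.e. $G$ is transitive, so the point-stabilizer is realized by loops in $C$), this monodromy is all of $S_{d-1}$, which is certainly transitive. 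Hence $T_\gamma$ is connected; combined with the previous lemma describing its singularities (nodes, so it is in particular reduced and pure-dimensional of dimension $1$) we get that $T_\gamma$ is irreducible.

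The main obstacle I anticipate is the second and third steps together: carefully justifying that the monodromy of the cover $T_\gamma \to C$ is exactly the action of the point-stabilizer $S_{d-1}$ on $d-1$ letters, rather than some smaller group. This requires being precise about base points and about the relationship between $\pi_1$ of $C$ (punctured appropriately) and $\pi_1$ of $\PP^1 \setminus \{\text{branch points}\}$: one must check that the natural map realizes $\pi_1(C \setminus f^{-1}(\text{branch pts}))$ as the preimage of $\mathrm{Stab}_G(i)$ under $\pi_1(\PP^1 \setminus \text{branch}) \twoheadrightarrow G$, and that its image in $\mathrm{Sym}(\{1,\dots,d\}\setminus\{i\})$ is therefore all of $\mathrm{Stab}_{S_d}(i)$. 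A clean way to handle the bookkeeping is to observe that the fibre product description gives $T_\gamma \setminus \Delta = (C \times_{\PP^1} C) \setminus \Delta$, whose components correspond to orbits of $G$ on pairs of distinct letters; since $G = S_d$ acts $2$-transitively on $\{1,\dots,d\}$ for $d \geq 2$, there is a single orbit, hence a single component of $T_\gamma \setminus \Delta$, and since $T_\gamma$ has no embedded or lower-dimensional components by the singularity analysis, $T_\gamma$ itself is irreducible. I would present the argument in this last form — via $2$-transitivity of $S_d$ — since it is the shortest and most transparent route, relegating the monodromy-of-$f$ computation to the citation of Lemma 5.2 of \cite{GK}.
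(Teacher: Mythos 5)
Your argument is correct, and in fact the paper itself offers no proof of this lemma to compare against: it is quoted verbatim from \cite{GK}, Lemma 5.2, so the only honest route is exactly the one you take, namely to re-derive it. Your final form of the argument --- $T_\gamma\setminus\Delta$ coincides set-theoretically with $(C\times_{\PP^1}C)\setminus\Delta$, whose components over the unbranched locus correspond to orbits of the monodromy group $G$ on ordered pairs of distinct sheets, and $G=S_d$ is $2$-transitive --- is the standard and surely the intended proof. Two points to tighten. First, do not cite \cite{GK}, Lemma 5.2 for the statement that $G=S_d$: that lemma \emph{is} the statement you are proving, so the citation would be circular; you must supply the derivation, which you essentially do. Second, and this is the only real gap in what you wrote: the claim that the transpositions coming from the simple branch points ``generate a transitive subgroup'' is not automatic from transitivity of $G$ (a transitive group \emph{containing} a transposition need not be generated by its transpositions, e.g.\ $D_4\subset S_4$). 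The fix is the product-one relation in $\pi_1(\PP^1\setminus B)$: the local monodromy at the single non-simple branch point is the inverse of the product of all the transpositions, hence redundant, so $G$ is generated by the transpositions alone; being transitive and generated by transpositions, $G=S_d$. Note that this is precisely where the hypothesis ``all branch points simple \emph{except one}'' enters --- with two or more non-simple branch points the argument breaks down --- so it deserves to be made explicit. The remaining bookkeeping (every component of $T_\gamma$ is a curve dominating $\PP^1$, hence meets the connected open part, and $T_\gamma\cap\Delta$ is finite so no component lies in the diagonal) is handled adequately by your appeal to the preceding singularity lemma and to $T_\gamma$ being a divisor in $C\times C$.
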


For general $(C,\gamma)$ the trace curve $T_{C,\gamma}$ 
is thus a smooth irreducible curve
of genus
$$
g'=(g-1)(2d-3)+(d-1)^2.
$$
Indeed, the class of the line bundle $O(T_{\gamma})$ 
defined by the trace curve $T_{\gamma}$ on $C \times C$ 
equals $p_1^*L\otimes p_2^*L \otimes O(-\Delta)$ with $p_i$ ($i=1,2$)
the two projections, $L$ the line bundle defining $\gamma$ 
and $\Delta$ the class of the diagonal, as one easily checks by
restricting to horizontal and vertical fibres, hence
globally on $C \times C$. The homology class of $T_{\gamma}$ is 
then $d(F_1+F_2)-[\Delta]$ with $F_i$ the fibre of $p_i$. 
The adjunction formula implies the formula for the genus $g^{\prime}$.

The trace curve $T_{\gamma}$ possesses an involution $\iota$ induced
by interchanging the two factors of $C \times C$. The fixed points
of $\iota$ are exactly the intersection points of $T_{\gamma}$ with
the diagonal and these are the points $(p,p)$ with $p$ a ramification
point of $\gamma$. We define the {\sl reduced trace curve} 
$\hat{T}_{\gamma}=\hat{T}_{C,\gamma}$
as the quotient curve $T_{\gamma}/\iota$.

We are interested in the case that $g=2k$ is even and $d=k+1$. A generic curve $C$ of genus $2k$ has only finitely
 many $g^1_d$ with $d=k+1$, namely
$N=N(k)={2k \choose k+1}/k$. For a generic $\gamma$ on
such a smooth curve the geometric genus of the trace curve $T_{\gamma}$ equals
$$
g^{\prime} = 5 k^2-4k+1 \, ,
$$
while the geometric genus of the reduced trace curve $\hat{T}_{\gamma}$
equals
$$
\hat{g}= \frac{(5k-2)(k-1)}{2} \, .
$$
\begin{remark}
Note that by construction the reduced trace curve possesses a morphism
of degree $k(k+1)/2$ to ${\PP}^1$ defined by sending $p+q$ to $\gamma(p)=
\gamma(q)$. So the gonality is much lower than $[(\hat{g}+3)/2]$. 
\end{remark}

\begin{example}
For $k=2$ the trace curve of a curve $C$ of genus $4$ with a $g^1_3$
has genus $13$ while the reduced 
trace curve has genus $4$ and is isomorphic to $C$.
\end{example}
The construction of the trace curve can be done in families. This defines
a morphism $\phi: H_{g,d} \to  M_{g'}$ with $H_{g,d}$ the Hurwitz scheme
of simple covers of the projective line ${\PP}^1$ of degree $d=k+1$ and
genus $g=2k$. Here simple means that the fibres of $\gamma$ always have 
at least $d-1$ points. We thus get correspondences
\begin{displaymath}
\begin{xy}
\xymatrix{
H_{2k,k+1} \ar[r]^{\phi} \ar[d]^{p}& {M_{g'}} && 
H_{2k,k+1} \ar[r]^{\hat{\phi}} \ar[d]^{p} & M_{\hat{g}}\\
M_{2k} &&& M_{2k} \\
}
\end{xy}
\end{displaymath}

\begin{example}
Let $k=3$ and let 
$C$ be a general curve of genus $6$. According to [ACGH, p.\ 218] the curve
is birational to a plane sextic with four nodes. The five
$g^1_4$ are given by the four linear systems obtained by the lines
through a node and by the conics through all four nodes. The reduced trace
curve $\hat{T}_{\gamma}$ associated to such a $g^1_4$ is of genus $13$ 
and carries a fixed point free involution:
if $p_1+p_2+p_3+p_4$ is a divisor from the $g^1_4$ and $p_1+p_2$
belongs to the reduced trace curve then the corresponding point
is $p_3+p_4$. This involution is fixed point free for general $(C,\gamma)$.
So we get a curve $T_{\gamma}^{\prime}$ of genus $7$ as the 
quotient of the reduced trace curve.  This curve is a trigonal curve
and the Prym variety of the \'etale double cover $\hat{T}_{\gamma} \to 
T_{\gamma}^{\prime}$ is known to be isomorphic to ${\rm Jac}(C)$.
So up to isogeny ${\rm Jac}(\hat{T}_{\gamma})$ is a product of
${\rm Jac}(T_{\gamma}^{\prime})$ and ${\rm Jac}(C)$. Our map
$H_{6,4} \to M_{13}$ factors through a map $H_{6,4} \to M_{7}$
and is dominant on the trigonal locus ${\mathcal T}_7$ 
in $M_{7}$.  Note that both
$M_{6}$ (or $H_{6,4}$) and the trigonal locus ${\mathcal T}_{7}$ have 
dimension $15$. It seems that $H_{6,4} \to {\mathcal T}_7$
is birational.
\end{example}
\smallskip

The reduced trace curve carries a correspondence:

\begin{proposition}
For  $(C,\gamma)$ in $H_{2k,k+1}$ the
reduced trace curve $\hat{T}$ possesses a correspondence 
that induces an endomorphism $e$ of ${\rm Jac}(\hat{T})$
satisfying a quadratic equation $(e-1)(e+k-2)=0$ in 
${\rm End}({\rm Jac}(\hat{T}))$.
\end{proposition}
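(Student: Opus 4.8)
The plan is to realise the desired correspondence as a composite of two natural maps through $C$, and then to compute the induced endomorphism of ${\rm Jac}(\hat{T})$ using that $\gamma$ annihilates $H^{1}$.

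Put $d=k+1$. First I would use the two maps $a\colon T_{\gamma}\to\hat{T}$, the quotient by the involution $\iota$, of degree $2$, and $b:=\pi_{1}\colon T_{\gamma}\to C$, the first projection, of degree $d-1=k$; note that $b\circ\iota=\pi_{2}$. Let $D$ be the correspondence on $\hat{T}$ defined by the composite
$$
\hat{T}\ \xleftarrow{\,a\,}\ T_{\gamma}\ \xrightarrow{\,b\,}\ C\ \xleftarrow{\,b\,}\ T_{\gamma}\ \xrightarrow{\,a\,}\ \hat{T},
$$
that is, by the cycle $(a\times a)_{*}[T_{\gamma}\times_{C}T_{\gamma}]$ on $\hat{T}\times\hat{T}$, the fibre product being formed with respect to $b$. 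Concretely, over a general fibre $\{p_{1},\dots,p_{d}\}$ of $\gamma$ the correspondence $D$ sends the point $z=p_{i}+p_{j}$ of $\hat{T}$ to $2z$ together with all the other degree-$2$ subdivisors of that fibre meeting $z$, so $D$ has degree $2(d-1)=2k$ over each factor.

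Then I would compute the endomorphism that $D$ induces on ${\rm Jac}(\hat{T})$, equivalently on $H^{1}(\hat{T},\QQ)$. By construction $D=u\circ v$, where $v=b_{*}a^{*}\colon H^{1}(\hat{T},\QQ)\to H^{1}(C,\QQ)$ and $u=a_{*}b^{*}\colon H^{1}(C,\QQ)\to H^{1}(\hat{T},\QQ)$. The key point is that $v\circ u$ is a scalar operator: since $a$ is the quotient by $\iota$ one has $a^{*}a_{*}={\rm id}+\iota^{*}$, hence
$$
v\circ u=b_{*}({\rm id}+\iota^{*})b^{*}=(d-1)\,{\rm id}+\pi_{1*}\pi_{2}^{*},
$$
using $b_{*}b^{*}=\deg b=d-1$ and $b\circ\iota=\pi_{2}$. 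Here $\pi_{1*}\pi_{2}^{*}$ is the action on $H^{1}(C,\QQ)$ of the trace curve $T_{\gamma}\subset C\times C$ regarded as a self-correspondence of $C$ through its projections; on divisors it sends $r$ to the residual divisor $\gamma^{-1}(\gamma(r))-r$, so as an operator it equals $\gamma^{*}\gamma_{*}-{\rm id}$, and $\gamma^{*}\gamma_{*}=0$ on $H^{1}(C,\QQ)$ because it factors through $H^{1}({\PP}^{1},\QQ)=0$. Therefore $v\circ u=(d-2)\,{\rm id}=(k-1)\,{\rm id}$, and consequently $D^{2}=u\,(v\circ u)\,v=(k-1)\,u\circ v=(k-1)\,D$ on $H^{1}(\hat{T},\QQ)$. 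Since $D$ comes from an honest algebraic cycle and ${\rm End}({\rm Jac}(\hat{T}))$ embeds into ${\rm End}(H^{1}(\hat{T},\ZZ))$, the relation $D^{2}=(k-1)D$ already holds in ${\rm End}({\rm Jac}(\hat{T}))$.

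Finally I would set $e:=D-(k-2)\,{\rm id}$, which is again induced by a correspondence on $\hat{T}$, a $\ZZ$-combination of the cycle $D$ and the diagonal. Then $e+k-2=D$ and $e-1=D-(k-1)\,{\rm id}$, so
$$
(e-1)(e+k-2)=(D-(k-1)\,{\rm id})\,D=D^{2}-(k-1)D=0
$$
in ${\rm End}({\rm Jac}(\hat{T}))$, which is the assertion. Equivalently, $D$ acts on $H^{1}(\hat{T},\QQ)$ with the two eigenvalues $k-1$, on the subspace pulled back from $C$ via $a_{*}\pi_{1}^{*}$, and $0$ on a complement, so $e$ has eigenvalues $1$ and $2-k$.

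The computation is formal once the geometry is in place; the one delicate point will be to check that the fibre-product presentation of $D$, and hence the identities above, behave as stated near the $6k$ branch points of $\gamma$, where a fibre of $\gamma$ acquires a double point and the fixed points of $\iota$ (the ramification points of $\gamma$, over which $a$ ramifies) enter. This is a routine local verification. I would also note that nothing here really uses the numerics $g=2k$, $d=k+1$ beyond the shape of the answer: the same reasoning gives $(e-1)(e+d-3)=0$ for the reduced trace curve of any base-point-free $g^{1}_{d}$.
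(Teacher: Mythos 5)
Your argument is correct, but it takes a genuinely different route from the paper. The paper defines the correspondence $D=\{(p+q,r+s)\in\hat{T}^2:\gamma\geq p+q+r+s\}$ and simply invokes Kanev's Proposition 5.8, which delivers both the quadratic relation and (what the paper actually uses afterwards) the identification of ${\rm im}(1-e)$ with ${\rm Jac}(C)$. You instead realise a correspondence as the composite $a_*b^*\,b_*a^*$ through $C$ and verify the quadratic by hand from $a^*a_*={\rm id}+\iota^*$, $b_*b^*=(d-1)\,{\rm id}$ and the vanishing of $\gamma^*\gamma_*$ on $H^1(C,\QQ)$; this computation checks out. Note that your cycle is not the paper's: over a general fibre, for $z=p+q$ yours picks out $2z$ plus the pairs meeting $z$ in exactly one point, while the paper's picks out the pairs disjoint from $z$; the two are related by $D_{\mathrm{paper}}+D_{\mathrm{yours}}-\Delta=\hat{T}\times_{\PP^1}\hat{T}$, and since the right-hand side annihilates $H^1(\hat{T},\QQ)$, the paper's cycle acts as ${\rm id}-D_{\mathrm{yours}}$, with eigenvalues $1$ and $2-k$ --- so the paper's $e$ is the correspondence itself, needing no diagonal correction, whereas your $e=D_{\mathrm{yours}}-(k-2)\,{\rm id}$ is its image under the root-swapping substitution $x\mapsto(3-k)-x$. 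Both satisfy $(e-1)(e+k-2)=0$, so the proposition is proved either way; your route has the merit of being elementary and self-contained and of exhibiting the eigenspace decomposition explicitly (the $(k-1)$-eigenspace of $D_{\mathrm{yours}}$ is $a_*\pi_1^*H^1(C,\QQ)$, recovering the copy of ${\rm Jac}(C)$ that the paper extracts from Kanev's theorem), while the paper's citation is shorter and gives the isomorphism, not merely an isogeny, onto that abelian subvariety.
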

\begin{proof}
This follows from a result of Kanev, cf.\
\cite{Kanev}, (Prop.\ 5.8), p.\ 265.
The correspondence is given by
$$
D:=\{ (p+q,r+s) \in \hat{T}^2: \gamma \geq p+q+r+s \} \, .
$$
This induces an endomorphism $e$ of ${\rm Jac}(\hat{T})$
that decomposes ${\rm Jac}(\hat{T})$; define an abelian subvariety
$A=A_e$ of ${\rm Jac}(\hat{T})$ as the image of the
endomorphism $1-e$. It follows from the result of
Kanev loc.\ cit.\ that $A$ is isogenous (even isomorphic)
to ${\rm Jac}(C)$. 
\end{proof}

That ${\rm Jac}(\hat{T})$ contains an isogenous image of ${\rm Jac}(C)$
can be seen as follows.
The embedding $\rho: \hat{T} \to {\rm Sym}^2(C)$ induces a map
$\rho^*: {\rm Pic}^0({\rm Sym}^2(C)) \to  {\rm Pic}(\hat{T})$.
Now we have an isomorphism ${\rm Pic}^0(C) \to {\rm Pic}^0({\rm Sym}^2(C))$
given by associating to the divisor class $a-b$ the divisor class
$C_a-C_b$ with $C_p$ the image of the map $C \to {\rm Sym}^2(C)$
that sends $q$ to $p+q$. On the other hand we have a map
$z: {\rm Pic}(\hat{T}) \to {\rm Pic}^0(C)$ by associating to $t_1-t_2$
with $t_i=p_i+q_i$ the divisor $p_1+q_1-p_2-q_2$, that is the
image of $t_1-t_2$ under $p_{1*} \sigma^*$ with $\sigma: T \to \hat{T}$ the
natural map and $p_1: T \to C$ the projection. The composition of
$$
{\rm Pic}^0(C) \to {\rm Pic}^0(\hat{T}), 
\qquad a-b \mapsto C_a\cdot \hat{T} -C_b \cdot \hat{T} 
$$
with $p_{1*}\sigma^*$ is $k-1$ on ${\rm Pic}^0(C)$. 
Hence ${\rm Pic}^0(C)$
maps to an  abelian subvariety of ${\rm Pic}^0(\hat{T})$ and the quotient
is an abelian variety of dimension $\bar{g}=(5k-1)(k-2)/2$.
We thus find a map
$$ 
{\hat{\chi}}: H_{2k,k+1} \to {\mathcal A}_{(5k-1)(k-2)/2}, 
\quad\text{given by $(C, \gamma) \mapsto 
{\rm Jac}(\hat{T}_{\gamma})/{\rm Jac}(C)$},
$$
where ${\mathcal A}_{\bar{g}}$ denotes a moduli space
of polarized abelian varieties of dimension $\bar{g}$.

\end{section}
\begin{section}{The Action of the Correspondence on Divisors}\label{sec: actioncorresp}
The Hurwitz space $H_{d,g}$ is a smooth irreducible scheme 
and is compactified by the space of admissible covers $\bar{H}_{d,g}$. 
We can view it as a stack or orbifold, but $\bar{H}_{d,g}$ is not normal. 
We normalize and get a smooth stack $\tH_{d,g}$ of which $H_{d,g}$ 
can be considered as an open dense subspace. For more on this
normalization we refer to \cite{GK}.

Since $\bM_{g'}$ (resp.\ $\bM_{\hat{g}}$) is a smooth stack and
$\tH_{d,g}$ is smooth the map $\phi $  viewed as a rational map
$\bar{H}_{g,d} \dasharrow \bM_{g'}$ has locus of indeterminacy of
codim $\geq 2$.
We thus get maps
$$
\phi^* : {\rm Pic}(\bM _{g'}) \to {\rm Pic}(\tH_{d,g}), \qquad
\hat{\phi}^*: {\rm Pic}(\bM _{\hat{g}}) \to {\rm Pic}(\tH_{d,g}).
$$
When $g=2k$ and $d=k+1$ the natural map $p: \tH_{d,g} \to \bM_g$ is a
generically finite map and we studied in \cite{GK} the behaviour of
the induced map  $p_*: {\rm Pic}(\tH_{d,g}) \to {\rm Pic}(\bM_g)$.

One of the purposes of this paper is to study the composite map
$$
\alpha = p_* \phi^*: {\rm Pic}(\bM _{g'}) \to {\rm Pic}(\bM_g)
$$
and the similar map
$$
\hat{\alpha}= p_{*} \hat{\phi}^*: {\rm Pic}(\bM _{\hat{g}})
\to {\rm Pic}(\bM_g)\, .
$$
In \cite{GK} (Prop. 3.1 and 4.1) we determined the boundary divisors
in $\tH_{d,g}$ which do not map to zero under $p_*$. These are: a divisor
$E_0$ which maps dominantly on $\Delta_0$,  divisors $E_{j,c}$
for $1\leq j \leq  k$ and $0\leq c \leq [j/2]$, that map dominantly
to $\Delta_j$, and divisors $E_2, E_3$, each mapping dominantly to a
divisor in $\bM_g$ that intersects $M_g$. The general point of $E_2$ and
$E_3$ represents a curve which has as stable model a smooth genus $g$ curve.
To study the map $\alpha$ (resp.\ $\hat{\alpha})$ we thus may restrict
ourselves to studying the trace curve (resp.\ reduced trace curve)
for admissible covers in the smooth open substack $\tilde{H}$
of $\tH_{g,d}$
$$
\tH:=H_{d,g} \cup (\cup_{j,c} E_{j,c}) \cup E_0 \cup E_2 \cup E_3.
$$
We shall keep the notation $p:\tH \to \bM_g$ for the natural map.
\end{section}

\begin{section}{extending the Trace Curve}
In order to study divisors on $\tH $ it will suffice to look
at one-dimensional families of admissible covers with 
general member in $H_{g,d}$,
and their associated trace curves. Therefore we study in this section the extension of
the trace curve over $1$-dimensional base curves $B$ in~$\tH$.

Over the Hurwitz scheme $\tH$ we have a universal curve $\C$.
The general cover has $b=6k$ branch points.
The curve $\C$ fits in the following basic diagram
\begin{equation} \label{basicdiagram}
\begin{xy}
\xymatrix{
 \bM_{0,6k+1} \ar[d]_{\varpi}  & \C \ar[d]^{\pi} \ar[l]_{\;\;\;\; \gamma} 
&   \\
 \bM_{0,6k}   &   \tH \ar[l]^{q}   &   \\
}
\end{xy}
\end{equation}
where $q$ is the map that associates to
an admissible cover $C\to P$ the genus $0$ curve $P$ together with the $6k$
branch points.

We now assume that $B$ is a $1$-dimensional smooth base (disk or
the spectrum of a discrete valuation ring). Over $B$ we have the
pull back of the universal curve $\C$ and we can restrict the basic diagram
(\ref{basicdiagram}) to $B$. We shall define the trace curve $\Tcal$
as the closure of the locus of points $(a,b)$ of $\C \times_B \C$
with $a\neq b$ and $a$ and $b$ in the same fibre of $\gamma:
\C \to \bM_{0,6k+1}$.  The fiber of  $\C \times_B\C$
over a point $h \in B$ consists of the products of the various 
components of the curve $\C_h$. 
The fiber $T_h$ of the trace curve over $h$ lies in the product of 
components of $\C_h$ which map, by the map $\gamma$,
to the same rational component of the fiber of the map $\varpi$ 
over the point $q(h)$.
We shall carry out this construction locally. Note that either
both $a$ and $b$ are smooth points of the fibre $\C_{h}$ or both are
singular points. We start with the case of smooth points.

{\bf Case 1: pairs of smooth points.}  
Assume that $a \neq b$ are smooth points of the curve $\C_h$ with $\gamma(a)=
\gamma(b)$. We denote by $\sigma$ a local coordinate on $B$, by $u$
a local coordinate on $\bM_{0,6k+1}$, 
and by $x,y$ (resp.\ $x',y'$) local coordinates on $\C$ at $a$ (resp.\ $b$)
so that $\pi$ at $a$ (resp.\ $b$) is given by $x=\sigma$ (resp.\
$x'=\sigma$) and the map $\gamma$ to $\bM_{0,b+1}$ by $y=u^m$ (
resp.\ by $u=y'$) with $m=1$ or $m=2$ depending on whether $a$
is a ramification point. 
(Since we assume the cover is simple at most one of the smooth points
$a,b$ is a ramification point and if so we assume it is $a$.)
Then the equations for the trace
curve (as a family over $B$) around the point $(a,b)$ with $a\neq b$ are given by
$$
x=\sigma, \, x'=\sigma, \, y=(y')^m.
$$
For $(a,b)$ with $a=b$ and $m=2$ the equations of the trace curve are given by
$$
x=\sigma, \, x'=\sigma,\, y+y'=0\, .
$$
In both cases the corresponding system defines locally a 
smooth family of curves with smooth central fiber.

\bigskip
\noindent
{\bf Case 2: pairs of singular points.}
If the point $a$ is a singular (nodal) point of the curve $\C_h$, 
then any $b$ with $\gamma (a)=\gamma (b)$ is also a singular point of $\C_h$.
In our case we are interested in pairs of singular points $(a,b)$ 
with local equations of one of the following types:

\smallskip
\noindent
{\em Type 1:}
In this case $a \neq b$. The local equation at $a$ of the map 
$\pi: \C \to B$ is $xy=\sigma^m $ 
(in the $x,y, \sigma $  coordinate system as above)
and at $b$ it is $x'y'=\sigma^m $ 
(in the $x',y', \sigma $ coordinate system).
The local equation at $a$ of the map $\gamma : \C \to \bM_{0,6k+1}$ is
of the form $x=u,\, y=v$ and at $b$ it is  $x'=u,\, y'=v$. 
Then the local equations for the trace curve (as a family over $B$) 
at the point $(a,b)$ are given by 
$xy=\sigma ^m, \, x'y'=\sigma ^m,\,  x=x', y=y'$, i.e.\ by
$$
xy=\sigma ^m, \, x=x',\, y=y' .
$$
The last two equations define an intersection of hyperplanes 
and then the first implies that the family has an $A_{m-1}$
singularity at the point $(a,b)$, which we may resolve by 
inserting a chain of $(-2)$ curves of length $m-1$.

\smallskip
\noindent
{\em Type 2:}
In this case $a=b$. The local equation at $a=b$ of the map 
$\pi: \C \to B$ is $xy=\sigma$,
(in the $x,y, \sigma $ coordinate system).  
The local equation at $a=b$ of the map $\gamma: \C \to \bM_{0,6k+1}$ is
of the form $x^m=u,\; y^m=v$. Then the local equations for the trace curve 
(as a family over $B$) at the point $(a,b)$ are given
(in the  $x,y,x',y', \sigma $ coordinate system) by
$$
xy=\sigma , \quad x'y'=\sigma ,\quad \frac{x^m-{x'}^m}{x-x'} =0,
\quad \frac{y^m-{y'}^m}{y-y'} =0\, .
$$
But note that for $\sigma \neq 0$ the last two equations define the 
same locus (because of the first two equations). But for $\sigma =0$
they define the locus of points $(x,0,x',0,0)$  with $(x^m-{x'}^m)/(x-x') =0$ 
(which is the trace curve of the map $x^m=u$ in the
$xx'$-plane) plus the locus of points $(0,y,0,y',0)$
with $(y^m-{y'}^m)/(y-y') =0$ 
(which is the trace curve of the map $y^m=v$ in the $yy'$-plane). The point
$(0,0,0,0,0)$ is a singular point (for $m \geq 3$)  
of the family of trace curves. 
We perform a small blow up (inside the fiber product of curves) 
by setting: $ux'-vx=0,\, uy-vy'=0$. 
The proper transform of the trace curve by the blow up is given by the equations
$$
xy=\sigma , \quad x'y'=\sigma ,\quad ux'-vx=0,\quad uy-vy'=0, \quad 
\frac{u^m-v^m}{u-v} =0 \, .
$$
The last equation gives $u=\omega ^i v$, $i=1, \ldots, m-1$, with 
$\omega $ a primitive $m$-th root of unity. Therefore the
trace curve intersects the exceptional line at the $m-1$ points $[\omega^i,1]$. In the neighborhood of this point the trace curve
is given by the equations
$$
xy=\sigma,\quad x'=\omega ^i x,\quad 
y'=\omega^{-i}y,\quad [u,v]=[\omega^i,1], \quad i=1, \ldots, m-1\, .
$$
This defines locally a smooth family with nodal central fiber.

\smallskip
\noindent
{\em Type 3:}
In this case $a\neq b$. 
The local equation at $a$ of the map $\pi : \C \to B$ is $xy=\sigma  $,
(in the $x,y, \sigma $ coordinate system) and at $b$ is $x'y'=\sigma $.  
The local equation at $a$ of the map
$\gamma: \C \to \bM_{0,6k+1}$ is of the form $x^2=u,\, y^2=v$ and at $b$ 
it is ${x'}^2=u,\, {y'}^2=v$.
Then the local equations for the trace curve (as a family over $B$) 
at the point $(a,b)$ are given (in the  $x,y,x',y', \sigma $
coordinate system) by
$$
xy=\sigma , \;\; x'y'=\sigma ,\;\; x^2-{x'}^2  =0,\;\; y^2-{y'}^2  =0\, .
$$
We blow up as before and we find that the proper transform of the trace curve by the blow up is given by the equations
$$
xy=\sigma , \;\; x'y'=\sigma ,\;\; ux'-vx=0,\;\; uy-vy'=0, \;\; u^2-v^2 =0 \, .
$$
The last equation gives $u=\pm v$. Therefore the trace curve intersects the exceptional line at the two points $[\pm 1,1]$.
In the neighborhood of these points the trace curve is given by the equations
$$
xy=\sigma,\;\; x'= \pm x,\;\;y'=\pm y,\;\; [u,v]=[\pm 1,1] \, .
$$
This defines locally a smooth family with nodal central fiber.

\smallskip
\noindent
{\em Type 4:}
In this case $a\neq b$.  
The local equation at $a$ of the map $\pi : \C \to B$ is $xy=\sigma$,
(in the $x,y, \sigma $ coordinate system) and at $b$ is $x'y'=\sigma ^m $.  
The local equation at $a$ of the map
$\gamma: \C \to \bM_{0,6k+1}$ is of the form $x^m=u,\, y^m=v$ and at 
$b$ it is $x'=u,\, y'=v$.  Then the local equations for
the trace curve (as a family over $B$) at the point $(a,b)$ are given 
(in the  $x,y,x',y',\sigma $ coordinate system) by
$$
xy=\sigma , \quad x'y'=\sigma ^m ,\quad x'=x^m,\quad y'=y^m  \, .
$$
This defines locally a smooth family of curves with nodal central fiber.

\smallskip
\noindent
{\bf Conclusion.} By performing the small blow-ups at the pairs of points of
type $2$ and type $3$ we created a (singular) nodal model ${\mathcal T}'$
over $B$. By resolving the singularities (of type $A_m$) we obtain a 
smooth model  $\tau: \tT  \to B$ 
of the trace curve, a nodal family of curves with smooth total space.

\end{section}
\begin{section}{The Geometry of the Trace Curve}\label{Geometry}
In our study of the divisors in $\tH $ we shall need to know
the shape of the trace curve near a point
of the divisors $E_0, E_{j,c}, E_2$ and $E_3$ in $\tH$.
We may assume that the limit point is a generic point of a
component of one of these divisors. The reader can find the description
of the generic admissible cover over any of these divisors in
our paper \cite{GK}. For each of these cases we explicitly carry out the
construction done in the preceding section. 

In the following figures \ref{fiberoverE0}, \ref{fiberoverE2}, 
\ref{fiberoverE3} and \ref{fiberoverEjc},
on the left we show the fiber $\C_h$ of $\C \to \tH$ over a generic point 
$h$  of the boundary components $E_0$, $E_2$,
$E_3$ and $E_{j,c}$ respectively. 
On the right we show the corresponding fiber $\tT_h$ of 
the smooth model of the family of the trace curves as constructed 
as in the preceding section and its first projection 
$\eta : \tT_h \to \C_h$.

Figure \ref{fiberoverE0} corresponds to the case
where $h$ is a general point of $E_0$.
The admissible cover on the left is described as follows:
it consists of a main component, $C$ is a curve of genus $2k-1$, and
rational  curves $R_1, \ldots, R_{k-1}$ and $S$. This maps to
a rational curve consisting of two components $\PP_1$ and $\PP_2$.
The map from the curve $C$ to $\PP_1$ has degree $k+1$. 
The components $R_i$ map 
isomorphically to $\PP_2$ and the map from $S$  to $\PP_2$
has degree $2$. 
At all the intersection points of the above components,  
the admissible cover has ramification degree $1$. 
The  rational curve $\PP_1$ contains $6k-2$ branch points and 
$\PP_2$ contains $2$ branch points.

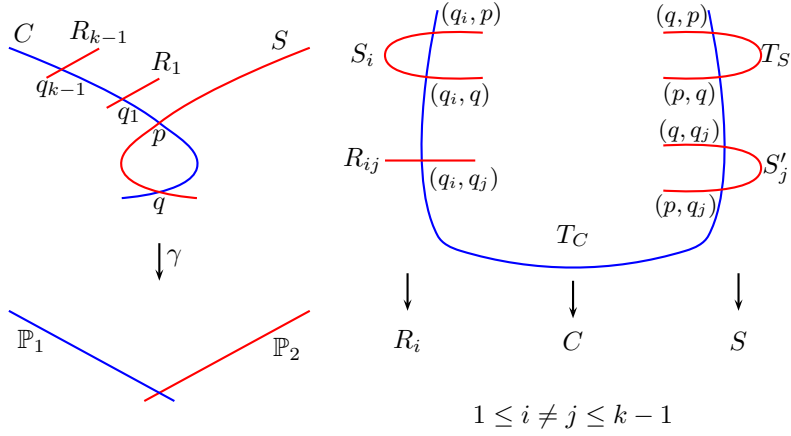
\begin{figure}
\begin{center}
\begin{pspicture}(-5,-3)(5,3)
\psline[linecolor=red](-3.2,-2.2)(-1,-1) 
\psline[linecolor=blue](-2.8,-2.2)(-5,-1) 
\pscurve[linecolor=blue](-5,2.5)(-3,1.5)(-2.5,0.9)(-3.5,0.5) 
\pscurve[linecolor=red](-1,2.5)(-3,1.5)(-3.5,0.9)(-2.5,0.5) 
\psline{->}(-3,-0.1)(-3,-0.6)
\rput(-2.8,-0.3){$\gamma $}
\psline[linecolor=red](-4.5,2.1)(-3.8,2.49)
\psline[linecolor=red](-3.7,1.7)(-3,2.09)
\rput(-4.8,2.7){$C$}
\rput(-1.4,2.6){$S$}
\rput(-1.3,-1.5){${\PP}_2$}
\rput(-4.7,-1.4){${\PP}_1$}
\rput(-3,1.3){$p$}
\rput(-3,0.4){$q$}
\rput(-2.9,2.3){$R_1$}
\rput(-3.8,2.7){$R_{k-1}$}
\rput(-3.4,1.6){$q_1$}
\rput(-4.3,1.95){$q_{k-1}$}

\pscurve[linecolor=blue](0.7,3) (0.7,0) (1,-0.2) (4,-0.2) (4.3,0) (4.3,3)
\rput(2.5,-2.4){$1 \leq i\neq j\leq k-1$}
\rput(2.5,0){$T_C$}
\pscurve[linecolor=red] (1.3,2.7) (0,2.4) (1.3,2.1)
\rput(-0.3,2.4){$S_i$}
\rput(1.15,2.95){\small $(q_i,p)$}
\rput(1,1.87){\small $(q_i,q)$}
\psline[linecolor=red] (0,1) (1.2,1)
\rput(-0.3,1){$R_{ij}$}
\rput(1.05,0.75){\small $(q_i,q_j)$}
\pscurve[linecolor=red] (3.7,2.7) (5,2.4) (3.7,2.1)
\rput(5.2,2.4){$T_S$}
\rput(3.95,2.92){\small $(q,p)$}
\rput(4.05,1.88){\small $(p,q)$}
\pscurve[linecolor=red] (3.7,1.2) (5,0.9) (3.7,0.6)
\rput(5.2,0.9){$S'_j$}
\rput(4.05,1.4){\small$(q,q_j)$}
\rput(4,0.4){\small $(p,q_j)$}
\psline{->}(0.3,-0.5)(0.3,-1)
\rput(0.3,-1.4){$R_i$}
\psline{->}(2.5,-0.6)(2.5,-1.1)
\rput(2.5,-1.4){$C$}
\psline{->}(4.7,-0.5)(4.7,-1)
\rput(4.7,-1.4){$S$}
\end{pspicture}
\end{center}
\caption{The fiber of $\C$ over a point of $E_0$ and the 
corresponding trace curve}
\label{fiberoverE0}
\end{figure}

The trace curve on the right has the following properties:
\begin{enumerate}
\item All the singular pairs $(a,b)$ of points are of type 1 with $m=1$.
\item The curves $T_C$ and  $T_S$ are the trace curve of the maps 
$C \to \PP_1$ and $S \to \PP_2$ respectively.
The curves $S_i$ (resp. $S'_j$) are produced by taking pairs of 
points from the components $R_i$ and $S$
(resp.\ $S$ and $R_j$). The curves $R_{ij}$ are produced by 
taking pairs of points from the components
$R_i$ and $R_j$.      
\item 
 The curves $R_{ij}$, $S_i$, $T_S$ and $S'_j$ are all rational curves.
\item The map  $ S_i \to R_i$ is 2:1 and  the  maps  $R_{ij} \to R_i$, and $T_s, S'_j \to S$  are all isomorphisms.
\end{enumerate}

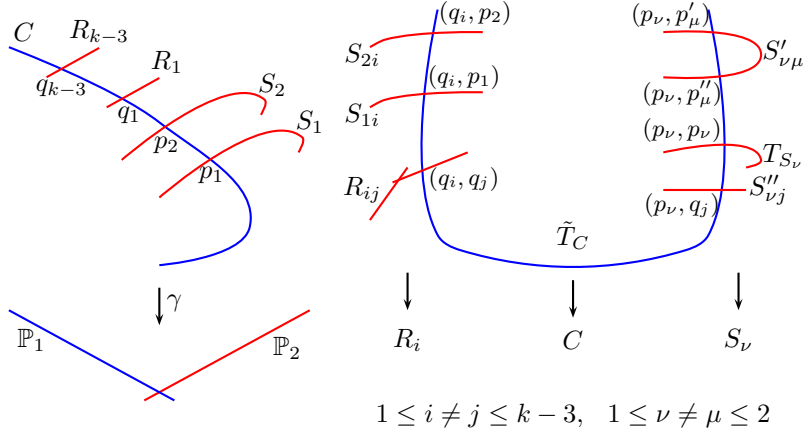
\begin{figure}
\begin{center}
\begin{pspicture}(-5,-3)(5,3)
\psline[linecolor=red](-3.2,-2.2)(-1,-1) 
\psline[linecolor=blue](-2.8,-2.2)(-5,-1) 
\pscurve[linecolor=blue](-5,2.5)(-3,1.5)(-1.8,0.1)(-3,-0.4) 
\pscurve[linecolor=red](-1.65,1.6) (-1.6,1.8) (-3.5,1) 
\pscurve[linecolor=red](-1.15,1.1) (-1.1,1.3) (-3,0.5) 
\psline{->}(-3,-0.7)(-3,-1.2)
\rput(-2.8,-0.9){$\gamma $}
\psline[linecolor=red](-4.5,2.1)(-3.8,2.49)
\psline[linecolor=red](-3.7,1.7)(-3,2.09)
\rput(-4.8,2.7){$C$}
\rput(-1.5,2){$S_2$}
\rput(-1,1.5){$S_1$}
\rput(-1.3,-1.5){${\PP}_2$}
\rput(-4.7,-1.4){${\PP}_1$}
\rput(-2.9,1.2){$p_2$}
\rput(-2.3,0.8){$p_1$}
\rput(-2.9,2.3){$R_1$}
\rput(-3.8,2.7){$R_{k-3}$}
\rput(-3.4,1.6){$q_1$}
\rput(-4.3,1.95){$q_{k-3}$}

\pscurve[linecolor=blue](0.7,3) (0.7,0) (1,-0.2) (4,-0.2) (4.3,0) (4.3,3)
\rput(2.5,-2.4){$1 \leq i\neq j\leq k-3$, $\;\;1\leq \nu \neq \mu \leq 2$}
\rput(2.5,0){$\tilde{T}_C$}
\pscurve[linecolor=red] (1.3,2.7) (0,2.6) (-0.2,2.5)
\rput(-0.3,2.4){$S_{2i}$}
\rput(1.2,2.95){\small $(q_i,p_2)$}
\pscurve[linecolor=red] (1.3,1.9) (0,1.8) (-0.2,1.7)
\rput(-0.3,1.6){$S_{1i}$}
\rput(1.05,2.1){\small $(q_i,p_1)$}
\psline[linecolor=red] (0.1,0.7) (1.1,1.1)
\psline[linecolor=red]  (0.3,0.9) (-0.2,0.2)
\rput(-0.3,0.6){$R_{ij}$}
\rput(1.05,0.75){\small $(q_i,q_j)$}
\pscurve[linecolor=red] (3.7,2.7) (5,2.4) (3.7,2.1)
\rput(5.3,2.4){$S'_{\nu\mu}$}
\rput(3.8,2.92){\small $(p_{\nu},p'_{\mu})$}
\rput(3.95,1.88){\small $(p_{\nu},p''_{\mu})$}
\pscurve[linecolor=red] (3.7,1.1) (5,1) (4.8,0.9)
\rput(5.3,1.05){$T_{S_{\nu}}$}
\rput(3.95,1.4){\small$(p_{\nu},p_{\nu})$}
\psline[linecolor=red](3.7,0.6) (4.8,0.6)
\rput(5.1,0.6){$S''_{\nu j}$}
\rput(3.95,0.4){\small $(p_{\nu},q_j)$}
\psline{->}(0.3,-0.5)(0.3,-1)
\rput(0.3,-1.4){$R_i$}
\psline{->}(2.5,-0.6)(2.5,-1.1)
\rput(2.5,-1.4){$C$}
\psline{->}(4.7,-0.5)(4.7,-1)
\rput(4.7,-1.4){$S_{\nu}$}
\end{pspicture}
\end{center}
\caption{The fiber of $\C$ over a point of $E_2$ and the corresponding trace curve}
\label{fiberoverE2}
\end{figure}

Figure \ref{fiberoverE2} corresponds to the case where $h$ is a 
general point of $E_2$. The admissible cover on the left is 
described as follows: it consists of a curve $C$ of genus $2k$ 
and rational curves $R_1, \ldots, R_{k-3}$, $S_1$ and $S_2$. 
The map from the curve $C$ to $\PP_1$ has degree $k+1$.
The components $R_i$ map isomorphically to
$\PP_2$ and the maps from $S_1$ and $S_2$ to $\PP_2$ have degree $2$. 
The admissible cover has ramification degree $1$ at the points 
$q_1, \ldots, q_{k-3}$ and ramification degree $2$ at the points
$p_1$ and $p_2$. The rational curve $\PP_1$ contains $6k-2$ branch points 
while $\PP_2$ contains $2$ branch points.

The trace curve on the right has the following properties:
\begin{enumerate}
\item The pairs $(q_i,q_j)$ are of type 1, with $m=2$; 
the pairs $(p_1,p_1)$ and $(p_2,p_2)$ are of type 2,
with $m=2$; the pairs $(p_1,p_2)$ and $(p_2,p_1)$ are of type 3; 
the pairs $(q_i, p_{\nu})$ and $(p_{\nu}, q_j)$ are of type 4, with $m=2$.
\item The curve $\tilde{T}_C$ is the normalization of the trace curve of 
the map $C \to \PP_1$.
The curves $T_{S_{\nu}}$ are the trace curves of the maps $S_{\nu} \to \PP_2$.
The curves $S_{i1}$ (resp. $S_{i2}$)  are produced by taking pairs of 
points from the components $R_i$ and $S_1$ (resp.\ $R_i$ and $S_2$). 
The curves $S''_{\nu j}$ are obtained by taking pairs of points from 
the components $S_{\nu}$ and $R_j$.
The curves $S'_{\nu\mu}$  are obtained by taking pairs of points 
from the components $S_{\nu}$ and $S_{\mu}$.
The curves $R_{ij}$ are obtained by taking pairs of points from the 
components $R_i$ and $R_j$.
\item The curves $S_{1i}$, $S_{2i}$, $R_{ij}$, 
$S'_{\nu\mu}$, $T_{S_{\nu}}$ and $S''_{\nu j}$ are all rational curves.
 \item The maps $S_{1i}, S_{2i} \to R_i$ are $2:1$ 
and the maps $R_{ij} \to R_i$ and
$S'_{\nu\mu}, T_{S_{\nu}}, S''_{\nu j} \to S_{\nu}$ are all isomorphisms. 
The $(-2)$ curve  which joins $R_{ij}$ with $\tilde{T}_C$ 
contracts to the points $q_i$.
\end{enumerate}

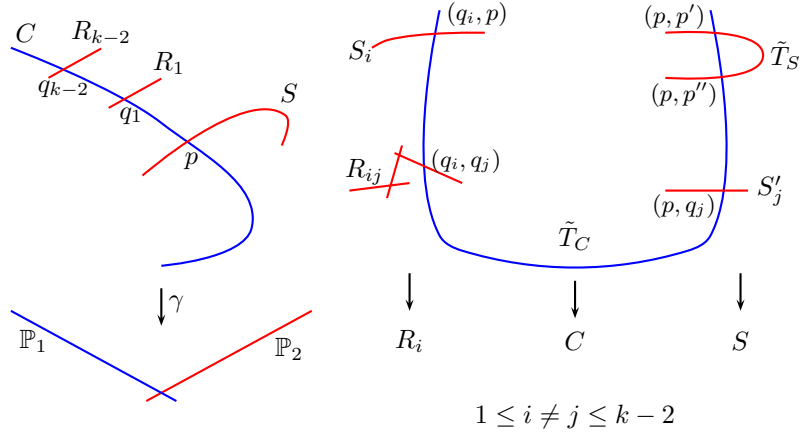
\begin{figure}
\begin{center}
\begin{pspicture}(-5,-3)(5,3)
\psline[linecolor=red](-3.2,-2.2)(-1,-1) 
\psline[linecolor=blue](-2.8,-2.2)(-5,-1) 
\pscurve[linecolor=blue](-5,2.5)(-3,1.5)(-1.8,0.1)(-3,-0.4) 
\pscurve[linecolor=red](-1.4,1.2) (-1.35,1.6) (-3.25,0.8) 
\psline{->}(-3,-0.7)(-3,-1.2)
\rput(-2.8,-0.9){$\gamma $}
\psline[linecolor=red](-4.5,2.1)(-3.8,2.49)
\psline[linecolor=red](-3.7,1.7)(-3,2.09)
\rput(-4.8,2.7){$C$}
\rput(-1.3,1.9){$S$}
\rput(-1.3,-1.5){${\PP}_2$}
\rput(-4.7,-1.4){${\PP}_1$}
\rput(-2.6,1){$p$}
\rput(-2.9,2.3){$R_1$}
\rput(-3.8,2.7){$R_{k-2}$}
\rput(-3.4,1.6){$q_1$}
\rput(-4.3,1.95){$q_{k-2}$}

\pscurve[linecolor=blue](0.7,3) (0.7,0) (1,-0.2) (4,-0.2) (4.3,0) (4.3,3)
\rput(2.5,-2.4){$1 \leq i\neq j\leq k-2$}
\rput(2.5,0){$\tilde{T}_C$}
\pscurve[linecolor=red] (1.3,2.7) (0,2.6) (-0.2,2.5)
\rput(-0.35,2.45){$S_i$}
\rput(1.2,2.95){\small $(q_i,p)$}
\psline[linecolor=red] (0.1,1.1) (1,0.7)
\psline[linecolor=red]  (0.2,1.2) (0,0.5)
\psline[linecolor=red]  (0.3,0.7) (-0.5,0.6)
\rput(-0.3,0.85){$R_{ij}$}
\rput(1.05,1){\small $(q_i,q_j)$}
\pscurve[linecolor=red] (3.7,2.7) (5,2.4) (3.7,2.1)
\rput(5.3,2.4){$\tilde{T}_S$}
\rput(3.82,2.92){\small $(p,p')$}
\rput(3.95,1.88){\small $(p,p'')$}
\psline[linecolor=red](3.7,0.6) (4.8,0.6)
\rput(5.1,0.6){$S'_j$}
\rput(3.95,0.4){\small $(p,q_j)$}
\psline{->}(0.3,-0.5)(0.3,-1)
\rput(0.3,-1.4){$R_i$}
\psline{->}(2.5,-0.6)(2.5,-1.1)
\rput(2.5,-1.4){$C$}
\psline{->}(4.7,-0.5)(4.7,-1)
\rput(4.7,-1.4){$S$}
\end{pspicture}
\end{center}
\caption{The fiber of $\C$ over a point of $E_3$ and the corresponding trace curve}
\label{fiberoverE3}
\end{figure}

Figure \ref{fiberoverE3} corresponds to the case where $h$ is a 
general point of $E_3$.
The admissible cover on the left is described as follows: 
the curves $R_1, \ldots, R_{k-2}$ and $S$ are rational curves. 
The curve $C$ is a curve of genus $2k$. 
The components $R_i$ map isomorphically to
$\PP_2$ and the map from $S$ to $\PP_2$ have degree $3$. 
The map from the curve $C$ to $\PP_1$ has degree $k+1$.
The admissible cover has ramification degree $1$ at the points 
$q_1, \ldots, q_{k-2}$ and ramification degree $3$ at the point  $p$. 
The $\PP_1$ contains $6k-2$ branch points and the  $\PP_2$ contains 
$2$ branch points.

The trace curve on the right has the following properties:
\begin{enumerate}
\item The pairs $(q_i,q_j)$ are of type 1, with $m=3$; 
the pair $(p,p)$ is of type 2, with $m=3$;  the
pairs $(q_i,p)$ and $(p,q_j)$ are of type 4, with $m=3$.
\item The curves $\tilde{T}_C$ and  $\tilde{T}_S$ are the normalizations 
of the trace curves of the maps $C \to \PP_1$
and  $S \to \PP_2$ respectively. 
The curves $S_i$ (resp. $S'_j$)  are produced by taking pairs of points from
the components $R_i$ and $S$ (resp. $S$ and $R_i$). 
The curves $R_{ij}$ are produced by taking pairs of points
from the components  $R_i$ and $R_j$.
\item The curves $S_i$, $R_{ij}$, $\tilde{T}_S$ and $S'_j$ are all 
rational curves.
\item The map $S_i \to R_i$ is 3:1, the map $R_{ij} \to R_i$ is an 
isomorphism, the map $\tilde{T}_S \to S$ is $2:1$
and the map $S'_j \to S$ is an  isomorphism. 
The chain of $(-2)$ curves of length 2 which joins the $R_{ij}$ with
$\tilde{T}_C$ contracts to the point $q_i$.
\end{enumerate}

\begin{figure}
\begin{center}
\begin{pspicture}(-5,-3)(5,3)
\psline[linecolor=red](-3.7,-2.2)(-1.5,-1) 
\psline[linecolor=blue](-3.3,-2.2)(-5.5,-1) 
\psline[linecolor=red](-3.7,0.8)(-1.5,2) 
\psline[linecolor=blue](-3.3,0.8)(-5.5,2) 
\psline{->}(-3.5,-0.7)(-3.5,-1.2)
\rput(-3.3,-0.9){$\gamma $}
\psline[linecolor=red](-5.2,1.6)(-4.5,1.99)
\psline[linecolor=red](-4.4,1.2)(-3.7,1.59)
\psline[linecolor=blue](-1.8,1.6)(-2.5,1.99)
\psline[linecolor=blue](-2.6,1.2)(-3.3,1.59)
\rput(-5.5,2.2){$C_1$}
\rput(-1.5,2.2){$C_2$}
\rput(-1.8,-1.5){${\PP}_2$}
\rput(-5.2,-1.4){${\PP}_1$}
\rput(-3.45,0.65){$p$}
\rput(-3.8,1.8){$S_1$}
\rput(-4.4,2.2){$S_{k-j+c}$}
\rput(-4.05,1.05){\small $p_1$}
\rput(-5,1.4){\small $p_{k-j+c}$}
\rput(-3.2,1.8){$R_1$}
\rput(-2.6,2.2){$R_{c}$}
\rput(-2.9,1.05){\small $q_1$}
\rput(-2.05,1.45){\small $q_{c}$}
\pscurve[linecolor=blue](-0.5,1) (1.9,0) (2.2,-0.4)(2.5,0) (2.8,0.4) (3.1,0) (5.5,-1) 
\rput(5.5,-1.3){$\tilde{T}_{C_1}$}
\pscurve[linestyle=dotted, linecolor=blue](-0.8,0.7) (1.6,-0.3) (1.9,-0.7)
\pscurve[linecolor=blue](-1.1,0.4) (1.3,-0.6) (1.6,-1.0)
\rput(-1.35,0.4){$C_{1\rho}$}
\rput(-0.7,0){\small $(p_{\lambda},q_{\rho})$}
\rput(5.7,0){\small $(q_{\nu},p_{\mu})$}
\rput(0.95,0.8){\small $(p_{\lambda},p)$}
\rput(0.95,-0.8){\small $(p,q_{\rho})$}
\rput(4.05,0.8){\small $(q_{\nu},p)$}
\rput(4.05,-0.8){\small $(p,p_{\mu})$}
\psline{->}(1,-1.1) (0.6,-1.5)
\rput(0.3,-1.8){$C_1$} 
\psline{->}(1,1.1) (0.6,1.5)
\rput(0.3,1.8){$S_{\lambda}$}
\psline{->}(4,-1.1) (4.4,-1.5)
\rput(4.7,-1.8){$C_2$}
\psline{->}(4,1.1) (4.4,1.5)
\rput(4.7,1.8){$R_{\nu}$}
\pscurve[linestyle=dotted, linecolor=blue](3.1,0.7) (3.4,0.3) (5.8,-0.7)
\pscurve[linecolor=blue](3.4,1.0) (3.7,0.6) (6.1,-0.4)
\rput(6.4,-0.4){$C'_{\nu 1}$}
\pscurve[linecolor=red](-0.5,-1) (1.9,0) (2.2,0.4) (2.5,0) (2.8,-0.4) (3.1,0) (5.5,1)
\rput(5.5,1.3){$\tilde{T}_{C_2}$}
\pscurve[linestyle=dotted, linecolor=red](-0.8,-0.7) (1.6,0.3) (1.9,0.7)
\pscurve[linecolor=red](-1.1,-0.4) (1.3,0.6) (1.6,1.0)
\rput(-1.35,-0.4){$C_{\lambda 2}$}
\pscurve[linestyle=dotted, linecolor=red](3.1,-0.7) (3.4,-0.3) (5.8,0.7)
\pscurve[linecolor=red](3.4,-1.0) (3.7,-0.6) (6.1,0.4)
\rput(6.4,0.4){$C'_{2\mu}$}
\rput(2.5,-2.6){$1 \leq \lambda \neq \mu \leq k-j+c$, $\;\; 1 \leq \nu \neq \rho \leq c$}
\psline{->}(2.2,-1)(2.2,-1.5)
\psline{->}(2.8,-1)(2.8,-1.5)
\rput(2.2,-2){$R_{\nu}$}
\rput(2.8,-2){$S_{\lambda}$}
\psline[linecolor=blue] (2.2,0.3) (1.9,1.1)
\psline[linecolor=blue] (1.9,0.9) (2.3,1.8)
\psline[linestyle=dotted, linecolor=blue] (2.3,1.6) (1.9,2.1)
\psline[linecolor=blue] (1.9,1.9) (2.3,2.6)
\rput(1.6,2.4)  {$R_{\nu \rho}$}
\psline[linecolor=red] (2.8,0.3) (3.1,1.1)
\psline[linecolor=red] (3.1,0.9) (2.7,1.8)
\psline[linestyle=dotted, linecolor=red] (2.7,1.6) (3.1,2.1)
\psline[linecolor=red] (3.1,1.9) (2.7,2.6)
\rput(3.4,2.4) {$S_{\lambda \mu}$}
\end{pspicture}
\end{center}
\caption{The fiber of $\C$ over a point of $E_{j,c}$ and the corresponding trace curve}
\label{fiberoverEjc}
\end{figure}
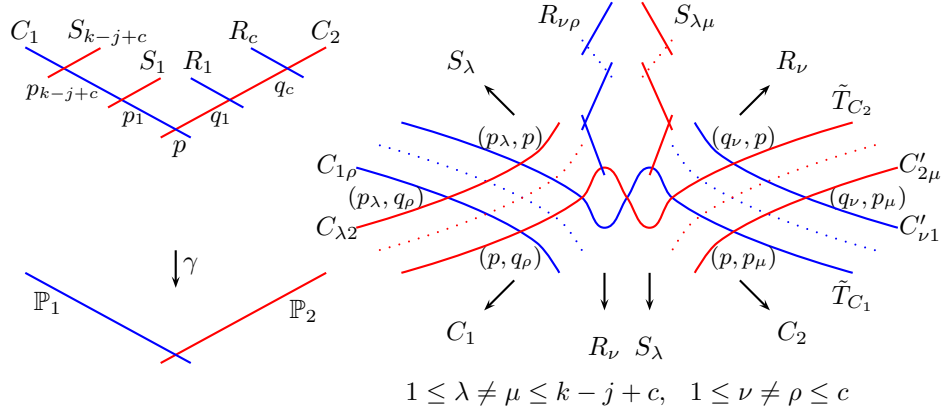

Figure \ref{fiberoverEjc} corresponds to the case where $h$ is a 
general point of $E_{j,c}$. The admissible cover on the left is 
described as follows: The curves $R_1, \ldots, R_c$ and 
$S_1, \ldots, S_{k-j+c}$ are rational curves. 
The curve $C_1$ has genus $2k-j$ and the curve $C_2$ has genus $j$. 
The curve $R_{\nu}$ (resp.\ $S_{\lambda}$) map isomorphically  
to $\PP_1$ (resp.\ $\PP_2$).  The map from the curve $C_1$ to $\PP_1$
has degree $k+1-c$ and  the map from the curve $C_2$ to $\PP_2$ 
has degree $j+1-c$. The $\PP_1$ contains $6k-3j$
branch points and the  $\PP_2$ contains $3j$ branch points.
The trace curve on the right has the following properties:
\begin{enumerate}
\item The pairs $(p_{\lambda}, q_{\rho})$,  
$(q_{\nu},p_{\mu})$, $(p_{\lambda},p_{\mu})$ and
$(q_{\nu}, q_{\rho})$  are of type 1, with $m=j+1-2c$; 
the pair $(p,p)$ is of type 2, with $m=j+1-2c$;
the pairs $(p_{\lambda},p)$, $(q_{\nu},p)$, $(p,p_{\mu})$ 
and $(p,q_{\rho})$ are of type 4, with $m=j+1-2c$.
\item The curves $\tilde{T}_{C_1}$ and $\tilde{T}_{C_2}$ 
are the normalizations of the trace curves of the maps
$C_1 \to \PP_1$ and $C_2 \to \PP_2$, respectively and 
they intersect at $j-2c$ points.  The curves $C_{1\rho}$
(resp.\ $C'_{\nu 1}$)  are produced by taking pairs of points 
from  the components $C_1$ and  $R_{\rho}$
(resp.\ $R_{\nu}$ and $C_1$). 
The curves $C'_{2\mu}$  (resp.\ $C_{\lambda 2}$)  are produced 
by taking pairs of points from  the components $C_2$ and  
$S_{\mu}$  (resp. $S_{\lambda}$ and $C_2$). The curves $R_{\nu \rho}$
(resp.\ $S_{\lambda \mu}$) are produced by taking pairs of points 
from  the components $R_{\nu}$ and $R_{\rho}$
(resp.\ $S_{\lambda}$ and $S_{\mu}$).
\item The curves $S_{\lambda \mu}$, $R_{\nu \rho}$ are rational curves. 
The curves $C_{1 \rho}$ and $C'_{\nu 1}$
are isomorphic to $C_1$, the curves $C_{\lambda 2}$ and $C'_{2\mu}$ are 
isomorphic to $C_2$.
\item The maps $C_{1 \rho } \to C_1$, $C'_{2\mu} \to C_2$, 
$S_{\lambda \mu} \to S_{\lambda}$ and $R_{\nu \rho} \to R_{\nu}$  
are isomorphisms, the map $C_{\lambda 2} \to S_{\lambda}$ is $j+1-c:1$ 
and the map $C'_{\nu 1} \to R_{\nu}$ is $k+1-c:1$.   
The vertical chain of $(-2)$ curves of length $j-2c$  which ends to
$S_{\lambda \mu}$ (resp. $R_{\nu \rho}$) intersects 
$\tilde{T}_{C_1}$ (resp. $\tilde{T}_{C_2}$) at the
point $(p_{\lambda},p_{\mu})$ (resp.\  $(q_{\nu}, q_{\rho})$).  
The chain of $(-2)$  curves which ends to
$S_{\lambda \mu}$ (resp.\ $R_{\nu \rho}$) contracts to  
the point $p_{\lambda}$ (resp. $q_{\nu}$).
\end{enumerate}

\begin{example}
If $k=1$ then the trace curve $T$ for an admissible cover $C \to P$
representing a point of $H_{g,d}$ or a generic point of one of the 
divisors $E_0$, $E_2$, $E_3$ or $E_{j,c}$ equals the curve $C$ and 
the reduced trace curve equals the curve $P$.
For $k=2$ we get as reduced trace curve the curve $C$.
\end{example}

The reduced trace curve is constructed as the quotient of the 
trace curve by the action of the involution. This involution
extends to the smooth model $\tau : \tT \to B$ constructed
in the preceding section. Since the action is fixed point free
outside the diagonal we need to consider this action only 
at the points of the diagonal. 

In case 1, pairs  of smooth points $(a,b)$ 
with $a=b$,  the trace curve has equations $x=\sigma, \; x'=\sigma, \; y+y'=0$. The involution acts by interchanging 
$x$ with $x'$ and $y$ with $y'$. By taking invariant coordinates 
we observe that the quotient is smooth at this point. 
In case 2, pairs of singular points, the only type which involves 
points on the diagonal is type 2.
At these points the involution acts by interchanging $x$ with $x'$, 
$y$ with $y'$ and $u$ with $v$.
When $m$ is even we have a fixed point $[u,v]=[1,-1]$.
The local equations at this point are $[u,v]=[1,-1]$, $xy=\sigma$, 
$x+x'=0$, $y+y'=0$.
By taking invariant coordinates we observe that the quotient has  
an $A_1$ singularity.
There are $[m/2]$ branches on the reduced trace curve but when $m$ is 
even we have to resolve the $A_1$ singularity in the middle by 
inserting a $(-2)$ curve. 
\end{section}
\begin{section}{Generic Finiteness of the Trace Curve Map}
We now prove that the rational map $\phi: \bar{H}_{g,d} \dashrightarrow 
\bM_{g'}$ is generically
finite.

\begin{proposition}\label{genfiniteness}
Let $C$ be a general smooth
 curve of genus $g\geq 4$ and $\gamma$
a base point free $g_d^1$ with $g>2d-4$.
Then the trace curve $T_{\gamma}$ determines $C$ uniquely: if $C'$
is another curve with trace curve $T'$ with $T'$ isomorphic to $T$
then $C'$ is isomorphic to $C$.
\end{proposition}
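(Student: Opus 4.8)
The plan is to recover $C$ from $T = T_{\gamma}$ as a quotient, and then show that the extra data needed for the quotient construction is intrinsic to $T$. The trace curve $T$ carries the involution $\iota$ coming from swapping the two factors of $C \times C$; the quotient $T/\iota$ is the reduced trace curve $\hat T$, and $C$ itself is recovered as follows. The first projection $p_1 : T \to C$ expresses $C$ as a quotient of $T$ by the equivalence relation ``$(p,q) \sim (p,q')$ whenever $q$ and $q'$ lie in a common fibre of $\gamma$''. Equivalently, the two projections $p_1, p_2 : T \to C$ together give a correspondence on $C$ of bidegree $(d-1, d-1)$ whose iterates generate the fibres of $\gamma$; and $C$ is the image of the map $T \to \mathrm{Jac}(T)$ composed with an appropriate projection. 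So the first step is: \emph{identify inside $\mathrm{Aut}(T)$ (or inside the structure of $T$ as an abstract curve) the involution $\iota$ and the pair of projections $p_1, p_2$, up to the obvious symmetry}, and then $C = p_1(T)$, proving the proposition.

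First I would pin down $\iota$. For general $(C,\gamma)$ the curve $T$ has genus $g' = (g-1)(2d-3)+(d-1)^2$, which is large; $\iota$ is an involution on $T$ whose quotient $\hat T$ has genus $\hat g$ with $g' - \hat g$ of the expected size, and whose fixed locus is exactly the $2d + 2g - 2$ ramification points of $\gamma$ (by Riemann--Hurwitz for $\gamma$). One must show that for general $(C,\gamma)$ this is the \emph{only} involution of $T$ with these numerical invariants — or, better, the only involution at all, perhaps after checking $\mathrm{Aut}(T)$ is generated by $\iota$ for general $(C,\gamma)$. This I would prove by a degeneration/dimension argument: the Hurwitz space $H_{g,d}$ is irreducible (so a general point is as generic as any), and if a general $T$ had an extra automorphism the whole family would, contradicting a count of moduli (the image of $\phi$ has dimension $\dim H_{g,d} = 2g + 2d - 5$, which for $g > 2d-4$ is positive and one checks the generic fibre of $\phi$ cannot pick up a continuous or unexpected discrete symmetry). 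Alternatively one can argue more concretely: an automorphism of $T$ must permute the two rulings given by $p_1, p_2$ (these are the only two $g^1_{d-1}$'s on $T$ of that degree for general $(C,\gamma)$, by a Castelnuovo-type bound using $g' > $ the relevant threshold, which is where the hypothesis $g > 2d-4$ enters), hence either preserves each projection or swaps them; in the first case it descends to an automorphism of $C$ preserving $\gamma$, which is trivial for general $C$; in the second case it agrees with $\iota$ up to such an automorphism.

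Once $\iota$ is intrinsic, the two projections are recovered as the two ``halves'' of the $\iota$-orbit structure: concretely, $p_1$ and $p_2$ are characterized as the two morphisms $T \to Z$ (to a curve $Z$) of minimal degree $d-1$ such that $\iota$ exchanges the two fibre-pieces over a point of $\mathbb{P}^1$ in the way dictated by the trace construction — equivalently, $C$ is the normalization of the image of $T$ in $\mathrm{Jac}(T)$ under $p_{1*}$, and $p_2 = \iota \circ p_1$ composed with the identification. The identification of the $g^1_{d-1}$'s as the unique ones of that degree is the crux: I expect the main obstacle to be ruling out that a general trace curve carries \emph{other} low-degree pencils (or other involutions) that could give a spurious reconstruction. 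I would handle this by a Brill--Noether / Castelnuovo dimension estimate on $T$ — using $g' = (g-1)(2d-3)+(d-1)^2$ and the inequality $g > 2d-4$ to force $g'$ large enough that $W^1_{d-1}(T)$ is finite and, by irreducibility of $H_{g,d}$ together with an explicit degeneration to a reducible trace curve (of the type analyzed in the figures of Section~\ref{Geometry}, e.g.\ the fibre over $E_0$, whose components and their low-degree maps are completely listed), contains exactly the two expected classes $p_1^*\mathcal{O}(1)$ and $p_2^*\mathcal{O}(1)$. Granting that, the reconstruction $C \cong p_1(T)$ is forced, and since $C$ was assumed general the hypothesis transfers to $C'$, completing the proof.
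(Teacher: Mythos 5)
Your plan is a genuinely different strategy from the paper's, but as written it has real gaps at exactly the points you flag as "the crux," and the tools you propose for those points do not match the objects involved. The central issue: the two projections $p_1,p_2\colon T\to C$ are degree-$(d-1)$ maps onto a curve of genus $g\ge 4$, not pencils, so they are not elements of $W^1_{d-1}(T)$, and no Brill--Noether or Castelnuovo finiteness statement about $g^1_{d-1}$'s on $T$ says anything about them. (The pencils on $T$ of interest are things like $\gamma\circ p_1$, of degree $d(d-1)$; cf.\ the remark that the reduced trace curve has gonality $\le k(k+1)/2$.) What you would actually need is a uniqueness statement for degree-$(d-1)$ maps from $T$ to genus-$g$ curves, or for involutions of $T$ with prescribed quotient genus and fixed locus; neither is established, and the proposed degeneration to the reducible fibres over $E_0$ is not carried out and would at best control what happens on a boundary stratum, not exclude extra structure on the generic $T$. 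The hypothesis $g>2d-4$ also plays no identifiable role in your sketch: you invoke it for an unspecified Castelnuovo bound, whereas it is needed (see below) precisely as the condition that the Brill--Noether number of a $g^1_{d-1}$ on $C$ is negative.

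The paper avoids all of this by not attempting an intrinsic reconstruction. Given an isomorphism $\psi\colon T_\gamma\to T_{\gamma'}$, the composite $p_{1,*}\,\psi^{-1}\,(p_1')^*$ is a homomorphism $j\colon \mathrm{Jac}(C')\to\mathrm{Jac}(C)$. Since $\mathrm{Jac}(C)$ is simple for general $C$, $j$ is zero or an isogeny; if it were zero, the divisors $p_{1,*}\psi^{-1}((p_1')^*x)$ would move in a pencil of degree $d-1$ on $C$, impossible because $\rho=2d-g-4<0$ --- this is where $g>2d-4$ enters. Hence $j$ is an isogeny, a translate of $C'$ maps birationally to a genus-$g$ curve in $\mathrm{Jac}(C)$, and the Bardelli--Pirola theorem (all genus-$g$ curves on a generic $g$-dimensional Jacobian, $g\ge 4$, are birational to $C$) concludes. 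If you want to pursue your reconstruction route, you would need to supply the uniqueness of the involution and of the two degree-$(d-1)$ covers as separate, nontrivial lemmas; the Jacobian argument is both shorter and sidesteps them entirely.
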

\begin{proof}
Suppose that $C^{\prime}$ is another smooth curve of genus $g$ with
a pencil $\gamma^{\prime}$ such that $T_{\gamma}$ and $T_{\gamma^{\prime}}$
are isomorphic, say $\psi: T_{\gamma} {\buildrel \sim \over \to}
T_{\gamma^{\prime}}$. Let $p_1$ (resp.\ $p_1^{\prime}$) denote the
first projection of $T$ (resp.\ $T_{\gamma^{\prime}})$.
Then $p_{1,*}\, \psi^{-1} \, (p_1^{\prime})^{*}$
defines a homomorphism $j:{\rm Jac}(C^{\prime}) \to {\rm Jac}(C)$.
We claim that $j$ restricted to a suitable translate of $C^{\prime}$ is
birational to its image.
Since $C$ is general its Jacobian is simple (see e.g.\ \cite{Koizumi,Mori}),
hence $j$ is either zero or an isogeny. If $j$ is zero this means that
for general points $x$ and $y$ in $C^{\prime}$ the divisor
$p_{1,*}\psi^{-1}((p_1^{\prime})^*x)$ is linearly equivalent to
$p_{1,*}\psi^{-1}((p_1^{\prime})^*y)$ and this gives then a pencil
of degree $d-1$ on $C$; since by assumption the Brill-Noether number
$g - (r+1)(g-(d-1)+r)=2d-g-4$ is negative this does not exist on $C$.
Thus $j$ is an isogeny and for a suitable translate of $C^{\prime}$
the map $j$ will be birational.
This image is then a curve of geometric genus $g$ in
${\rm Jac}(C)$ and by a theorem of Bardelli and Pirola for a generic
Jacobian of genus $g \geq 4$ all curves of genus $g$ lying on it are
birationally equivalent to $C$, see \cite{B-P}.

\end{proof}
\begin{example}
Let $C$ be a generic curve of genus $4$. It has two $g^1_3$'s, say $\gamma_1$
and $\gamma_2$. Then the reduced trace curve $\hat{T}$ is isomorphic to
$C$ via the map $r \mapsto p+q$ if $p+q+r \sim \gamma$. But the
trace curves $T_{\gamma_1}$ and $T_{\gamma_2}$ (of genus $13$)
are in general not isomorphic since the maps $T_{\gamma_1} \to C$
and $T_{\gamma_2} \to C$ are branched at different points.
So the map $\phi: H_{4,3} \to M_{13}$ is of degree $(12)!$,
while $\hat{\phi} : H_{4,3} \to M_4$ coincides with the natural
map $p$.
\end{example}

\end{section}
\begin{section}{Intersection Theory on $\overline{M}_{0,b}$}
We recall some basic facts about the divisor theory of
the moduli space of $b$-pointed genus $0$ curves $\bM_{0,b}$, 
see \cite{Ke} (also, \cite{GK2}, section 2).
The boundary of $\bM_{0,b}$ is
the union of  irreducible divisors,
each of which corresponds to a decomposition of $B=\{1,\ldots,b\}$
as $B=\Lambda \sqcup \Lambda^c$ into two disjoint
subsets with $2\leq\# \Lg  \leq b-2$.
We write the corresponding divisor as $S_b^{\Lg}$
modulo the relation $S_b^{\Lg}=S_b^{{\Lg}^c}$.
We sometimes normalize the $\Lambda$ by requiring that
$$
\# (\Lambda \cap \{1,2,3\}) \leq 1.
$$
The map $\varpi: \bM_{0,b+1} \to \bM_{0,b}$ is
equipped with $b$ sections $s_j: \bM_{0,b} \to \bM_{0,b+1}$
with $j=1,\ldots,b$.

The boundary divisors of $\bM_{0,b+1}$ are related to those of
$\bM_{0,b}$ as follows:
$$
\varpi^*S_b^{\Lg}= S_{b+1}^{\Lg} \cup S_{b+1}^{\Lg \cup \{ b+1 \} },
$$
with $\Lg \subset \{1, \ldots, b \}$.
Note that if $\Lambda \subset \{1,\ldots,b\}$ is normalized,
then so are $\Lambda$ and $\Lambda\cup \{ b+1 \}$ as subsets of
$\{1,\ldots,b+1\}$.
So all the boundary components of
$\bM_{0,b+1}$ are coming from $\bM_{0,b}$ except the components
$S_{b+1}^{ \{j, b+1\}}$ ($j=1, \ldots, b$) that correspond to the
image of the $b$ sections $s_j$.

With  $\Lg \subset \{1,\ldots, b\}$, the generic element of
the divisor $S_b^\Lg$ represents a stable curve with two rational
components. Therefore the map $S_{b+1}^{\Lg } \to S_b^{\Lg}$
(resp.\ $S_{b+1}^{\Lg \cup\{b+1\}} \to S_b^\Lg$) is generically a
$\PP ^1$-fibration. We have
\begin{equation*}
S_b^{\Lambda_1} \cap S_{b}^{\Lambda_2} \neq \emptyset
\iff
\# (\Lambda_1 \cup \Lambda_2) \in
\{ \# \Lambda_1, \# \Lambda_2, \# \Lambda_1+\# \Lambda_2, b\}.
\end{equation*}

\begin{definition}
\label{def: symmetricdivisors}
With  $b=6k$ we define on $\bM_{0,b}$ for $2 \leq j \leq 3k-1$ the divisors
$$
\tb ^j=\sum_{\Lambda \subset B, \, \# \Lambda =j} S_b^{\Lambda}
\quad
\mbox{ and } \quad
\tb ^{3k}=\frac{1}{2}
\sum_{\Lambda \subset B, \, \# \Lambda=3k} S_b^{\Lambda} .
$$
\end{definition}
One easily determines the image of $\tH$ under the
morphism $q: \tH \to \bM_{0,6k}$.
\begin{lemma}
The image of $\tilde{H}$ under $q$  is contained in
$$
M_{0,b}\cup \tb ^2\cup \cup_{j=1}^k \tb^{3j} \, .
$$
\end{lemma}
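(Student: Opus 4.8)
The plan is to track where the map $q\colon \tH \to \bM_{0,6k}$ can send the generic point of each boundary component of $\tH$, namely $H_{g,d}$ itself and the divisors $E_0$, $E_{j,c}$, $E_2$, $E_3$, since these are the components we retained when defining $\tH$. By definition $q$ sends an admissible cover $C\to P$ to the pair $(P,\,\text{branch locus})$, so the combinatorial type of $P$ together with the partition of the $6k$ branch points among its components determines which boundary stratum of $\bM_{0,6k}$ the image lands in. A generic point of $H_{g,d}$ has $P\cong\PP^1$ smooth with $6k$ distinct marked points, so its image lies in $M_{0,b}$; it remains to read off the other cases from the descriptions in Section~\ref{Geometry} (Figures \ref{fiberoverE0}--\ref{fiberoverEjc}), which we may use freely.

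First I would handle $E_0$: here $P=\PP_1\cup\PP_2$ with $\PP_1$ carrying $6k-2$ branch points and $\PP_2$ carrying $2$. The image is therefore a generic point of $S_b^\Lambda$ with $\#\Lambda=2$, i.e.\ of $\tb^2$. Next, for $E_2$ and $E_3$ the left-hand curve $P$ again consists of two rational components $\PP_1,\PP_2$ carrying $6k-2$ and $2$ branch points respectively (the nontrivial ramification is "above" the node, not an extra branch point on $\PP_1$), so these also map into $\tb^2$. Finally, for $E_{j,c}$ the base $P=\PP_1\cup\PP_2$ splits the branch points as $6k-3j$ on $\PP_1$ and $3j$ on $\PP_2$, with $1\le j\le k$; hence the image is a generic point of $S_b^\Lambda$ with $\#\Lambda=3j$, $1\le j\le k$, that is, of $\tb^{3j}$. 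For each of these I would note that a \emph{generic} point of the relevant $E$-divisor maps to a \emph{generic} point of the corresponding $S_b^\Lambda$ (both sides being $\PP^1$-fibred over the appropriate lower-dimensional stratum), so that the image of $E$ under $q$ is contained in the listed divisor; since $\tb^j$ is defined as a sum of the $S_b^\Lambda$ with $\#\Lambda=j$ (up to the factor $1/2$ in the middle case, which does not affect supports), this gives the stated containment $q(\tH)\subset M_{0,b}\cup\tb^2\cup\bigcup_{j=1}^k\tb^{3j}$.

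The only point requiring a little care — and the main obstacle — is confirming the branch-point count $3j$ over $\PP_2$ for $E_{j,c}$, and in particular that $j$ really ranges only over $1\le j\le k$ rather than up to $2k-1$; this follows because in $\tH$ we only kept the components $E_{j,c}$ with $1\le j\le k$ (as recorded in Section~\ref{sec: actioncorresp}), so no boundary stratum of $\tH$ produces a component $\PP_2$ with more than $3k$ branch points, and the value $3j$ with $j\le k$ always satisfies $2\le 3j\le 3k$ so the symbol $\tb^{3j}$ is well defined in the range of Definition~\ref{def: symmetricdivisors}. Once these counts are in place, the lemma is immediate, since the generic point of any boundary divisor of $\tH$ lies in one of the listed strata and the interior $H_{g,d}$ maps to $M_{0,b}$, so the whole image is contained in their union.
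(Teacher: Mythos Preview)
Your proof is correct and follows exactly the approach the paper intends: the paper states this lemma without proof, prefacing it only with ``One easily determines the image of $\tH$ under the morphism $q$,'' and your argument supplies precisely the easy verification by reading off the branch-point partitions $(6k-2,2)$ for $E_0,E_2,E_3$ and $(6k-3j,3j)$ for $E_{j,c}$ from the admissible-cover descriptions in Section~\ref{Geometry}. There is nothing to add.
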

Recall that the $b$ sections $s_i$ define tautological classes $\psi_i$.
\begin{definition}\label{defpsi}
We define a divisor class on $\bM_{0,b}$ by
$$
\psi:= \sum_{i=1}^b \psi_i =
\sum_{j=2}^{b/2} \frac{(b-j)j}{b-1} \, \tb ^j \, .
$$
\end{definition}

\end{section}
\begin{section}{Applying Grothendieck-Riemann-Roch}
In this section we shall apply the Grothendieck-Riemann-Roch Theorem
to the family of trace curves over our $1$-dimensional base $B$
and the relative dualizing sheaf.
We have the diagram 

\begin{equation} \label{diag: T}
\begin{xy}
\xymatrix{
\Tcal'  \ar[dr]^{\eta'} \ar[ddr]_{\tau'} & &  {\tT} \ar[ll]_{\mu} 
\ar[ddl]^{\tau}    \ar[dl]_{\eta}  \ar[d]^{\theta}    & \\
         & \C \ar[d]_{\pi} &   {\Tcal}_{\sigma} \ar[r]^{\phi'} 
\ar[dl]^{\tau_{\sigma}} &   \bar{\mathcal C}_{g^{\prime}} \ar[dl]^{\pi'}  \\
     & B \ar[r]_{\phi}    &   \bM_{g^{\prime}} &   \\
}
\end{xy}
\end{equation}
the notation of which we now explain. The curve ${\Tcal}'/B$ is the 
singular trace curve in which we have performed the small blow-ups 
at the pairs of points of type $2$ and type $3$. It is a nodal 
family of curves.
The curve ${\tT}$ is the smooth model of ${\Tcal}'$ and 
$\theta: {\tT}\to {\Tcal}_{\sigma}$ is the stabilization map. The
space ${\Tcal}'$ has singularities of type $A_m$ 
and the cover  $\eta' : \Tcal' \to \C$ is a finite cover of degree $k$. 
The space ${\tT}$ contains chains of $(-2)$-curves which are obtained
by resolving the singularities of  ${\Tcal}'$.
The map $\eta : {\tT} \to \C$ is a generically finite 
cover of degree $k$.

We wish to calculate $\phi^* \lambda_{\pi'}$, where  $\lambda_{\pi'}$
is the Hodge class of $\bar{\mathcal C}_{g^{\prime}}$ over $\bM_{g'}$.
Note that $(\phi')^* \lambda_{\pi'}=\lambda_{\tau_{\sigma}}$ and since
$\theta: {\tT} \to {\Tcal}_{\sigma}$ is a contraction we have 
$\lambda_{\tau_{\sigma}}=\lambda_{\tau}$, cf.\ Lemma 3.2 in \cite{GK2},
so $\phi^*\lambda_{\pi^{\prime}}= \lambda_{\tau}$.

Application of Grothendieck-Riemann-Roch to $\tau: {\tT} \to B$
gives
$$
12 \lambda_{\tau}= \tau_*(\omega_{\tau}^2)+\delta_{\tau},
$$
where $\delta_{\tau}$ is the push forward of the singularity locus 
of the fibers and $\omega_{\tau}$ denotes the relative dualizing sheaf of
$\tau$, cf.\ \cite{Mumford2} . In order to carry this out we need to calculate
$\tau_*(\omega_{\tau}^2)$ and $\delta_{\tau}$. We begin with the latter.

\begin{proposition}\label{pro: deltatau}
For $k \geq 3$ we have
$$
\delta_{\tau}= (k^2+k) \, E_0+ (2k^2-10k+18) \,E_2 +
(3k^2-13k+16) \, E_3 + \sum_{j,c} d_{j,c} E_{j,c}
$$
with
$$
d_{j,c} =  
[{c \choose 2} +{k-j+c \choose 2}](j+1-2c) 
             +2(c+1) (k-j+c) +j  \, .
$$
Moreover 
$$
\delta_{\tau}=
\begin{cases} 2E_0+E_{1,0} & k=1 \\
6E_0+2E_3+3E_{1,0}+2E_{2,0}+6E_{2,1} & k=2 . \\
\end{cases}
$$
\end{proposition}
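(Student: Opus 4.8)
The plan is to compute $\delta_\tau$ as a sum of local contributions, one for each boundary divisor $E_0, E_2, E_3, E_{j,c}$, by counting the nodes in the central fiber $\tT_h$ of the smooth model over a generic point $h$ of that divisor, with multiplicities coming from how the total space $\tT$ sits over $B$. Recall that $\delta_\tau = \tau_*(\text{Sing})$ is the divisorial push-forward of the singular locus of the fibers of $\tau\colon \tT \to B$; since a node that is a smooth point of the total space contributes $1$ to $\delta_\tau$ at that fiber, while an $A_{m-1}$-point that has been resolved into a chain of $(-2)$-curves contributes $m$ new nodes (the resolved chain of length $m-1$ has $m$ nodes) each a smooth point of $\tT$. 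So for each divisor $E_\bullet$ the coefficient of $E_\bullet$ in $\delta_\tau$ is the total node count of the fiber $\tT_h$, which I will read off from Figures \ref{fiberoverE0}--\ref{fiberoverEjc} together with the local analysis of Cases 1--2 and Types 1--4 in the section ``extending the Trace Curve''.

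Concretely, for $E_0$ I would go through Figure \ref{fiberoverE0}: all singular pairs are of type 1 with $m=1$, so every node of $\C_h$ lifting to $\tT_h$ is an ordinary node contributing $1$, and I tally the intersections among the components $T_C$, $T_S$, $S_i$, $S'_j$, $R_{ij}$ using the stated degrees of the projection maps to $\C_h$. This should produce a quadratic polynomial in $k$ equal to $k^2+k$. For $E_2$ and $E_3$ I do the same but now the type-2 and type-3 pairs at the ramification points $p_\nu$ (with $m=2$, resp.\ $m=3$) contribute, after resolution, chains whose node counts I must track carefully — in particular the $(-2)$ curves joining the $R_{ij}$ to $\tilde T_C$ (stated to contract to $q_i$) add their own nodes — and summing gives $2k^2-10k+18$ and $3k^2-13k+16$ respectively. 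For the general boundary divisor $E_{j,c}$ I carry out the analogous bookkeeping from Figure \ref{fiberoverEjc}: the type-1 pairs have $m = j+1-2c$, the type-2 pair at $p$ has $m=j+1-2c$, and the type-4 pairs have $m=j+1-2c$; the chains of $(-2)$-curves of length $j-2c$ ending at $S_{\lambda\mu}$ and $R_{\nu\rho}$ each contribute $j-2c+1 = j+1-2c$ nodes, and I must also count the $j-2c$ intersection points of $\tilde T_{C_1}$ with $\tilde T_{C_2}$ and the nodes along the fibered products of the rational components. Assembling these, with the number of index pairs being $\binom{c}{2}$, $\binom{k-j+c}{2}$, $(c+1)(k-j+c)$ and so on, yields the claimed $d_{j,c}$.

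For the small cases $k=1$ and $k=2$ I argue separately. When $k=1$, by the Example in Section \ref{Geometry} the trace curve $T$ simply equals $C$, so $\tau\colon\tT\to B$ is (the smooth model of) the original family of stable curves over $B$; over $E_0$ the generic fiber acquires a single node (hence coefficient $2$, because $E_0$ maps dominantly to $\Delta_0$ and the family is the one studied in \cite{GK}, where $E_0 \to \Delta_0$ has the appropriate ramification), and over $E_{1,0}$ we get the coefficient $1$, giving $\delta_\tau = 2E_0 + E_{1,0}$; I should double-check this against the degeneration data for $\tH$ recorded in \cite{GK}. When $k=2$, the reduced trace curve equals $C$ but the trace curve itself has genus $13$; I redo the node count using the figures with the relevant small values $k=2$, where $E_2$ is absent or trivial and only $E_0$, $E_3$, $E_{1,0}$, $E_{2,0}$, $E_{2,1}$ survive, arriving at $6E_0 + 2E_3 + 3E_{1,0} + 2E_{2,0} + 6E_{2,1}$.

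The main obstacle will be getting the combinatorial node count exactly right over $E_{j,c}$, especially the interplay between the resolved $A_{m-1}$-chains and the ``extra'' nodes where these chains meet the main trace-curve components $\tilde T_{C_1}, \tilde T_{C_2}$, and correctly accounting for the multiplicities of the ramified covers $C_{\lambda 2}\to S_\lambda$ (degree $j+1-c$) and $C'_{\nu 1}\to R_\nu$ (degree $k+1-c$) when counting the preimages of the nodes of $\C_h$. A secondary subtlety is that a node of a fiber which is a \emph{singular} point of the total space $\Tcal'$ before resolution must be replaced, in the count for $\delta_\tau$, by the string of nodes of its minimal resolution; verifying that the length of each such string is exactly $m = j+1-2c$ (so that type-2 and type-4 singularities, after the small blow-up described in Type 2/Type 4, each contribute $j+1-2c$ rather than some other value) is where I expect to have to be most careful. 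Once the per-divisor counts are pinned down, assembling the final formula and checking the $k=1,2$ specializations against it is routine.
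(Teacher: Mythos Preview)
Your approach is exactly the paper's: read off the node count of the fibre $\tT_h$ over a generic point of each boundary divisor from the figures in Section~\ref{Geometry}, using the local Type~1--4 analysis to determine how many nodes each singular pair contributes. The paper's proof does precisely this, working out $E_2$ and $E_{j,c}$ explicitly as samples.

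Two of your stated per-type contributions are wrong, though, and will throw the arithmetic. A type-2 pair with parameter $m$ contributes $m-1$ nodes (the proper transform meets the exceptional line in the $m-1$ points $[\omega^i,1]$, each a smooth-total-space node), not $m$; over $E_{j,c}$ the single type-2 pair $(p,p)$ therefore accounts for exactly the $j-2c$ intersections of $\tilde T_{C_1}$ with $\tilde T_{C_2}$. A type-4 pair contributes exactly one node regardless of $m$ (the local analysis ends with ``smooth family with nodal central fibre''). Also, the degrees of the maps $C_{\lambda 2}\to S_\lambda$ and $C'_{\nu 1}\to R_\nu$ play no role: you are counting singular points of $\tT_h$ directly, not preimages under $\eta$. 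Finally, for $k=1$ the coefficient $2$ at $E_0$ is not a ramification effect; the trace curve equals $C$, and the fibre over $E_0$ simply has two nodes, namely $(p,q)$ and $(q,p)$ in Figure~\ref{fiberoverE0} (there are no $R_i$ since $k-1=0$). With these corrections your plan goes through.
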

\begin{proof}
This formula is obtained by looking at the pictures in Section \ref{Geometry}.
For example, the contribution of $E_2$ consists of a contribution
$2(k-2)(k-3)$ of the $R_{ij}$, a contribution $2(k-3)$ 
of the $S_{1i}$ and $S_{2i}$, a contribution $4$
of the $S'_{\nu \mu}$, a contribution $2$ of the $T_{S_{\nu}}$, 
  a contribution of $2(k-3)$ of the $S^{\prime\prime}_{\nu j}$, giving in
total $2(k-3)(k-4) + 4(k-3)+6= 2k^2-10k+18$.
The other coefficients are obtained in a similar way. For example,
for the case of $E_{j,c}$ we find a contribution $2\, {k-j+c \choose 2}$
from the chains ending with $S_{\lambda  \mu}$; similarly $2\, {c \choose 2}$
from those ending with $R_{\nu \rho}$, 
a contribution $2\, c(k-j+c)$ from the intersections
$C_{1\rho} \cdot C_{\lambda 2}$ and $C_{\nu 1} \cdot C_{2\mu}$,
a contribution 
$2c+2(k-j+c)$ from the intersections $\tilde{T}_{C_2}\cdot C_{\nu 1}$,
$\tilde{T}_{C_2}\cdot C_{\lambda 2}$, $\tilde{T}_{C_1}\cdot C_{2\mu}$,
$\tilde{T}_{C_1}\cdot C_{1\rho}$ and finally 
$j-2c$ from the intersections
of $\tilde{T}_{C_1}$ with $\tilde{T}_{C_2}$.
\end{proof}

\begin{remark}
Note that the formula for $k\geq 3$ remains valid if we interpret $E_2$
and $E_3$ (resp.\ $E_2$) as zero for $k=1$ (resp.\ for $k=2$).
\end{remark}

Now we turn to the calculation of $\tau_*(\omega_{\tau}^2)$. A first remark is 
that (in additive notation)
$$
\omega _{\tau} = \eta^*\omega_{\pi}+R_{\eta},
$$
with $R_{\eta}=\mu^*R_{\eta'}$, where $R_{\eta'}$  is the ramification locus 
of the finite map $\eta': \Tcal ' \to \C$.
This is the same as the closure of the  ramification locus of the map 
$\tau'$ (or $\tau $) restricted to the locus of $B$
which represents smooth curves.  Note that $R_{\eta'}$ is supported  
outside of the singular locus of $\Tcal'$
and so it defines a Cartier divisor on $\Tcal'$. 
The formula above is derived by applying $\mu^*$ to the formula
$\omega_{\tau'} = ({\eta'})^*\omega_{\pi}+R_{\eta'}$; 
the latter holds  because it holds outside of the singularities
of the spaces $\Tcal$ and $\C$. Since 
$\mu^*\omega _{\tau'} = \omega_{\tau}$ the formula follows.  

We calculate
$$
\omega_{\tau}^2 = \eta^*\omega_{\pi}^2 +2 \, \eta^*\omega_{\pi} 
\cdot R_{\eta } + R_{\eta}^2
$$
and observe
$$
\tau_*( \eta^*\omega_{\pi}^2 )  = \pi_* \eta_*(\eta^*\omega_{\pi}^2)
                              = k \; \pi_*(\omega_{\pi}^2)\, ,
$$
because $\eta $ is a generically finite map of degree $k$.

Note that $\C$ is a singular space but all the above cycles 
represent Cartier divisors, so the intersection product makes sense.
In the calculation we use the following diagram (\ref{diag: C}) with 
$\tilde{\C}$ the smooth model of $\C$ and with $b=6k$ in
 $\bM_{0,b}$ and $\bM_{0,b+1}$.

\begin{equation} \label{diag: C}
\begin{xy}
\xymatrix{
 &     &  \tilde{\C}  \ar[dll]_{r} \ar[ddl]^{\tilde{\pi}} \ar[dl]_{\nu}    \\
 \bM_{0,b+1} \ar[d]_{\varpi}  & \C \ar[d]_{\pi\;\;} \ar[l]^{\gamma } &   \\
 \bM_{0,b}   &   B \ar[l]^{q}   &   \\
}
\end{xy}
\end{equation}
If $c$ is a cycle on $\C$ we have $\pi_*c = \tilde{\pi}_*\nu^* c$ 
because $\nu_*\nu^*c =c$. 
Since now
$\nu^* \omega_{\pi} = \omega_{\tilde{\pi}}$ 
we get
\begin{equation} \label{eq: omegat2}
\tau_*(\eta^*\omega_{\pi}^2 ) = k\; \tilde{\pi}_*\omega_{\tilde{\pi}}^2 \, .
\end{equation}
We also have
$$
\tau_* (\eta^*\omega_{\pi} \cdot R_{\eta } )  = 
\pi_* \eta_* (\eta^*\omega_{\pi} \cdot R_{\eta })
=\pi_* (\omega_{\pi} \cdot \eta_* R_{\eta})\, .
$$
The trace curve is ramified over $\C$ in the points $(p,q)$ in the fibre
over $p \in \C$ where $q$ is a ramification point of the map $\gamma$. 
This implies
$$
\nu^* \eta_* R_{\eta} = r^* \hat{S} - 2R_r,
$$
with $\hat{S}= \sum_{i=1}^b S_{b+1}^{\{i,b+1\}}$ the sum of the 
image of the sections of the map $\varpi$ and $R_r$ is the closure of 
the ramification of $r$ over the smooth locus. 
This yields

\begin{equation} \label{eq: omegatR}
\tau_* (\eta^*\omega_{\pi} \cdot R_{\eta } ) = 
\tilde{\pi}_*[\omega_{\tilde{\pi}} \cdot (r^* \hat{S} - 2R_r )] \, .
\end{equation}
The right hand sides of (\ref{eq: omegat2}) and (\ref{eq: omegatR}) 
can be calculated  in a way similar to the calculations in
our paper \cite{GK2}.
In order to calculate $\tau_*(R_{\eta}^2)$ we will use that the 
map $\eta'$  (and $\eta $) is a simple cover and therefore if 
$V=\eta_*R_{\eta}$ is the branch locus of $\eta '$ (or $\eta$) then
$\eta^* V = 2 R_{\eta } + R'_{\eta}$ with $R_{\eta } \cdot R'_{\eta }=0$.  
Therefore,

\begin{equation} \label{eq: R2}
\begin{aligned}
\tau_* R_{\eta}^2 & = \frac{1}{2} \; \tau_*( R_{\eta}\cdot\eta^*V) = 
\frac{1}{2} \; \tilde{\pi}_*(\eta_*R_{\eta} \cdot V)\\
        &  =\frac{1}{2} \; \tilde{\pi}_*(V^2) = \frac{1}{2} \; 
\tilde{\pi}_*(\nu^*V^2) = \frac{1}{2}\; 
\tilde{\pi}_*[(r^* \hat{S} - 2R_r )^2]\, .
\end{aligned}
\end{equation}

Since we are dealing with the divisors $E_0$, $E_2$, $E_3$ and $E_{j,c}$ only
we may adapt the earlier definition of the  divisor class $\psi$ 
on $\bM_{0,b}$ by setting
\begin{equation}  \label{eq: psi}
\psi:= \sum_{i=1}^b \psi_i =
\frac{2(b-2)}{b-1} \, \tb^2 +   
\sum_{j=1}^{k} \frac{3j(b-3j)}{b-1} \, \tb ^{3j} \, .
\end{equation}

The following formulas are a consequence of Lemma 3.1 in our \cite{GK2}:
\begin{equation}  \label{eq:pullbackT}
\begin{aligned}
q^*T_b^2 & = E_0 + 2E_2 +3E_3 \, ,\\
q^*T_b^{3j} & = \sum _{c=0}^{[j/2]} (j+1-2c) E_{j,c}, \; j=1, \ldots, k \, .
\end{aligned}
\end{equation}

Carrying out the calculations as in \cite{GK2} for the right hand sides
of the equations (\ref{eq: omegat2}),(\ref{eq: omegatR})  and
(\ref{eq: R2}) the  following formulas can be deduced from \cite{GK2}
(cf.\ Lemma 4.2 there):

\begin{lemma} \label{prop: GK2}
We have the following identities
$$
\begin{aligned}
\tilde{\pi}_*(r^* \omega_{\varpi} \cdot R_r) & = q^* \psi, \quad
\tilde{\pi}_*(\omega_{\tilde{\pi}} \cdot R_r)  =  \frac{1}{2}\;  
q^* \psi, \quad 
\tilde{\pi}_*(R_r^{2})   = - \frac{1}{2} \;  q^* \psi  , \\
\tilde{\pi}_*(\omega_{\tilde{\pi}}^{2}) &= (3/2) q^* \psi - (k+1) \; 
q^*( T_b^2  +\sum_{j=1}^{k}  \tb ^{3j}) \, .
\end{aligned}
$$
\end{lemma}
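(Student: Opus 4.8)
The plan is to reduce all four identities to a single exact relation between $\omega_{\tilde{\pi}}$ and the ramification divisor, together with the intersection theory on $\bM_{0,b}$ already worked out in \cite{GK2}. First I would record the relation. The map $\gamma\colon\C\to\bM_{0,b+1}$ factors through the pulled-back universal $b$-pointed rational curve $\mathcal P=B\times_{\bM_{0,b}}\bM_{0,b+1}$, over which $\C$ is a finite \emph{admissible} cover $g\colon\C\to\mathcal P$ of degree $d=k+1$. Riemann--Hurwitz for $g$ reads $\omega_{\pi}=\gamma^*\omega_{\varpi}+R_{\gamma}$ with $R_{\gamma}$ the smooth ramification divisor and \emph{no} contribution at the nodes, since the ramification indices on the two branches at a node of an admissible cover agree. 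Passing to the smooth model $\nu\colon\tilde\C\to\C$ changes nothing for dualizing sheaves, the singularities of $\C$ being of type $A_m$, hence canonical; using $r=\gamma\circ\nu$, $R_r=\nu^*R_{\gamma}$ and $\nu^*\omega_{\pi}=\omega_{\tilde{\pi}}$ one obtains on $\tilde\C$ the exact identity of Cartier classes
$$\omega_{\tilde{\pi}}=r^*\omega_{\varpi}+R_r .$$

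Granting this, the first three identities follow from two core computations,
$$\tilde{\pi}_*(r^*\omega_{\varpi}\cdot R_r)=q^*\psi ,\qquad \tilde{\pi}_*(R_r^2)=-\tfrac12\,q^*\psi ,$$
because then $\tilde{\pi}_*(\omega_{\tilde{\pi}}\cdot R_r)=\tilde{\pi}_*(r^*\omega_{\varpi}\cdot R_r)+\tilde{\pi}_*(R_r^2)=\tfrac12\,q^*\psi$. For the first core computation: over the locus of smooth covers the marks on $\mathcal P$ are exactly the branch points, so $r$ restricted to the $i$-th component of $R_r$ equals $(s_i\circ q)\circ\tilde{\pi}$; hence $(r^*\omega_{\varpi})|_{R_r}$ is the pull-back of $\sum_i q^*s_i^*\omega_{\varpi}=q^*\psi$, and pushing forward (degree one on each component, sum over $i=1,\dots,b$) gives $q^*\psi$. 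For the second: $\C\to\mathcal P$ is a \emph{simple} cover, so exactly as in the derivation of (\ref{eq: R2}), if $V=g_*R_r$ is its branch divisor then $g^*V=2R_r+R_r'$ with $R_r\cdot R_r'=0$, whence $\tilde{\pi}_*(R_r^2)=\tfrac12\,\tilde{\pi}_*(R_r\cdot g^*V)$; since $g|_{R_r}$ is birational onto $V$ and $V$ is the disjoint union of the branch sections, each of self-intersection $-q^*\psi_i$ by adjunction on a section of a family of genus-$0$ curves, this equals $-\tfrac12\,q^*\psi$.

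For the fourth identity I would square the relation:
$$\tilde{\pi}_*(\omega_{\tilde{\pi}}^2)=\tilde{\pi}_*(r^*\omega_{\varpi}^2)+2\,\tilde{\pi}_*(r^*\omega_{\varpi}\cdot R_r)+\tilde{\pi}_*(R_r^2)=\tilde{\pi}_*(r^*\omega_{\varpi}^2)+\tfrac32\,q^*\psi ,$$
so it remains to identify $\tilde{\pi}_*(r^*\omega_{\varpi}^2)$ with $-(k+1)\,q^*(T_b^2+\sum_{j=1}^{k}T_b^{3j})$. Since $\mathcal P\to B$ is the base change of $\varpi$ along $q$, base change gives $(\mathcal P\to B)_*(\mathrm{pr}^*\alpha)=q^*\varpi_*\alpha$ for $\mathrm{pr}\colon\mathcal P\to\bM_{0,b+1}$; and as $g$ is proper and generically finite of degree $k+1$ while $\omega_{\varpi}^2$ is pulled back from $\mathcal P$, the projection formula gives $\tilde{\pi}_*(r^*\omega_{\varpi}^2)=(k+1)\,q^*\varpi_*(\omega_{\varpi}^2)$ — the exceptional $(-2)$-curves created by $\nu$ are contracted by $r$ and contribute nothing. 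The class $\varpi_*(\omega_{\varpi}^2)$ on $\bM_{0,b}$ is standard; after discarding the boundary classes annihilated by $q^*$ (those not of the form $T_b^2$ or $T_b^{3j}$, by the pull-back formulas (\ref{eq:pullbackT}) coming from \cite{GK2}, Lemma 3.1, together with the image description of $q$) it equals $-(T_b^2+\sum_{j=1}^{k}T_b^{3j})$ in the normalization of $\psi$ used in (\ref{eq: psi}). Equivalently, since the family over $B$ is pulled back from the universal family over $\tilde{H}$, all four identities are simply the restriction to $B$, via base change and the compatibility $q_B=q_{\tilde{H}}\circ j$ for the classifying map $j\colon B\to\tilde{H}$, of the universal statements of \cite{GK2}, Lemma 4.2.

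The step I expect to be the real obstacle is the last input: the explicit value of $\varpi_*(\omega_{\varpi}^2)$ on $\bM_{0,b}$ — equivalently, carrying out Grothendieck--Riemann--Roch for $\tilde{\pi}$ directly and matching the $\omega_{\tilde{\pi}}$-intersections along the ramification with the $\psi$-class of $\bM_{0,b}$ — together with the bookkeeping needed to transport the resulting universal formulas over the boundary components $E_0,E_2,E_3,E_{j,c}$ down to the one-dimensional base using (\ref{eq:pullbackT}). This is exactly what \cite{GK2} supplies; everything else above is Riemann--Hurwitz for admissible covers, the projection formula, adjunction, and base change.
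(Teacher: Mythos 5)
Your derivation is correct, and it supplies an argument where the paper itself gives none: the text simply asserts that these identities ``can be deduced from \cite{GK2} (cf.\ Lemma 4.2 there)'', so your closing remark that everything is the restriction to $B$ of the universal statements of \cite{GK2} is in fact the paper's entire proof. Your self-contained route is consistent with, and essentially interpolates, the techniques the paper does display in the neighbouring Lemma \ref{auxiliarypush} and in equations (\ref{eq: omegatR})--(\ref{eq: R2}): the identity $r\circ\rho_i=s_i\circ q$ for the ramification sections gives $\tilde{\pi}_*(r^*\omega_{\varpi}\cdot R_r)=\sum_i q^*s_i^*\omega_{\varpi}=q^*\psi$; the simple-cover trick $g^*V=2R_r+R_r'$ together with $\varpi_*(\hat S^2)=-\psi$ gives $\tilde{\pi}_*(R_r^2)=-\tfrac12 q^*\psi$; and Riemann--Hurwitz for admissible covers (correctly with no contribution at the nodes, since the ramification indices on the two branches agree, so $f^*(du/u)=m\,dx/x$ and the dualizing sheaves match there) gives $\omega_{\tilde{\pi}}=r^*\omega_{\varpi}+R_r$, whence the second identity and, after squaring and applying Lemma \ref{Gamma} with $\Gamma=\omega_{\varpi}^2$, the fourth. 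Two small points. First, your sign is the right one: note that the paper's own proof of Lemma \ref{auxiliarypush} writes $\omega_{\tilde{\pi}}=r^*\omega_{\varpi}-R_r$, which is a typo --- with the minus sign the stated identities of Lemma \ref{prop: GK2} and the conclusion $\tilde{\pi}_*(\omega_{\tilde{\pi}}\cdot r^*\hat S)=k\,q^*\psi$ would both fail. Second, the input you flag as the ``real obstacle'' is actually the easiest step: $\varpi_*(\omega_{\varpi}^2)=\kappa_1=-\sum_{j\ge 2}T_b^j$ follows from $12\lambda=\kappa_1+\delta$ with $\lambda=0$ in genus zero, and the restriction to $T_b^2+\sum_j T_b^{3j}$ is just the vanishing of $q^*$ on the remaining boundary classes (it has nothing to do with the normalization of $\psi$ in (\ref{eq: psi}), which is a slight non sequitur in your write-up but does not affect the result).
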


As a check please note that for $k=1$ the formula for 
$\tilde{\pi}_*(\omega_{\tilde{\pi}}^{2})$ gives 
$5\kappa_1=E_0+7E_{1,0}$, in agreement with $5\kappa_1=\delta_0+7\delta_1$ 
(see \cite{Mumford}, Eqn (8.5)).

We will need the following lemma.
\begin{lemma} \label{Gamma}
If $\Gamma$ is a cycle on $\bM_{0,b+1}$ then
$ \tilde{\pi}_*r^*\, \Gamma= (k+1)\, q^* \varpi_{*} \,\Gamma$.
\end{lemma}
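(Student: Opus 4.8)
The idea is to recognise $\tilde{\C}$ (indeed already $\C$) as an admissible cover living over the fibre product $P:=B\times_{\bM_{0,b}}\bM_{0,b+1}$, on which the cover has degree $d=k+1$, and then to combine the projection formula with base change for $P$.

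First I would construct a morphism $\tilde{\C}\to P$. Restricting the square (\ref{basicdiagram}) to $B$ we have $\varpi\circ\gamma=q\circ\pi$ on $\C$, hence $\varpi\circ r=q\circ\tilde{\pi}$ on $\tilde{\C}$ (recall $r=\gamma\circ\nu$, $\tilde{\pi}=\pi\circ\nu$), so by the universal property of the fibre product there is a morphism $\beta:=(\tilde{\pi},r):\tilde{\C}\to P$ with $p_1\circ\beta=\tilde{\pi}$ and $p_2\circ\beta=r$, where $p_1:P\to B$ and $p_2:P\to\bM_{0,b+1}$ are the projections. Now $\varpi$ is the universal curve over the smooth stack $\bM_{0,b}$, so it is flat and proper; hence $p_1$ is a flat proper family of nodal genus-$0$ curves over $B$, and since $B$ is irreducible with generic fibre of $p_1$ equal to $\PP^1$, the total space $P$ is irreducible; $\C$ and $\tilde{\C}$ are likewise irreducible (the generic fibre of $\pi$ is the smooth genus-$2k$ curve, and $\pi$ is flat). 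Over the generic point of $B$ the morphism $\beta$ restricts to the generic admissible cover $C\to\PP^1$, of degree $d=k+1$; so $\beta$ is surjective and generically finite of degree $k+1$ onto the irreducible $P$, whence $\beta_*[\tilde{\C}]=(k+1)\,[P]$.

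The projection formula then gives
$$
\tilde{\pi}_*r^*\Gamma \;=\; p_{1*}\,\beta_*\beta^*p_2^*\Gamma \;=\; p_{1*}\big(p_2^*\Gamma\cdot\beta_*[\tilde{\C}]\big)\;=\;(k+1)\,p_{1*}p_2^*\Gamma .
$$
It remains to identify $p_{1*}p_2^*\Gamma$ with $q^*\varpi_*\Gamma$, which is base change in the Cartesian square with corners $P,\ B,\ \bM_{0,b+1},\ \bM_{0,b}$. Here $q$ need not be flat, but $\bM_{0,b}$ is smooth, so $q$ is the composite of its graph $B\hookrightarrow B\times\bM_{0,b}$ (a regular embedding) with the flat projection $B\times\bM_{0,b}\to\bM_{0,b}$; since flat pull-back and refined Gysin pull-back each commute with proper push-forward along $\varpi$, one gets $q^*\varpi_*=p_{1*}p_2^*$ on cycle classes. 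Together with the display this yields $\tilde{\pi}_*r^*\Gamma=(k+1)\,q^*\varpi_*\Gamma$.

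In the only situation we actually use, $\Gamma$ is a divisor class, and then the last step can be bypassed: both sides are determined by their degree on a general fibre $\C_t$ of $\pi$, where $r^*\Gamma\cdot\tilde{\C}_t=\deg(\gamma|_{\C_t})\,(\Gamma\cdot P_t)=(k+1)\,(\Gamma\cdot P_t)$ with $P_t$ the fibre of $\varpi$ over $q(t)$, while $\varpi_*\Gamma$ is the integer $\Gamma\cdot P_t$. The one point that genuinely deserves a word of justification is the equality $\beta_*[\tilde{\C}]=(k+1)[P]$: it rests on the irreducibility of $P$ (which follows from flatness of $\varpi$ and irreducibility of $B$ and of the generic rational curve), so that a degree-$(k+1)$ generically finite morphism onto $P$ sends the fundamental class to $(k+1)[P]$; the base-change identity for the fibre product is then routine.
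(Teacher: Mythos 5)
Your proposal is correct and follows essentially the same route as the paper: the authors also introduce the fibre product $P=\bM_{0,b+1}\times_{\bM_{0,b}}B$, observe that the induced map $\tilde{\C}\to P$ is generically finite of degree $k+1$ so that $\xi_*\xi^*=k+1$, and conclude by the base-change identity $r_{2*}r_1^*=q^*\varpi_*$. Your write-up simply supplies more detail (irreducibility of $P$, the graph factorization justifying base change for non-flat $q$) than the paper's terser version.
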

\begin{proof}
Let $P=\bM_{0,b+1} \times_{\bM_{0,b}} B$ be the fibre product.
The induced map $\xi: \tilde{\C} \to P$ is  generically a
$(k+1):1$ map. Let $r_1: P \to \bM_{0,b+1}$ and $r_2: P \to B$
be the projections. Then $\xi_*\xi^* =k+1$.
We have $\tilde{\pi}_*r^*\, \Gamma= \tilde{\pi}_* \xi^* r_1^*\, \Gamma=
r_{2*} \xi_* \xi^* r_1^*\, \Gamma
= (k+1) \, r_{2*}r_1^*\, \Gamma= (k+1) \,q^* \varpi_{*} \, \Gamma$.
\end{proof}

\begin{lemma}\label{auxiliarypush}
We have
$$
\tilde{\pi}_*(r^* \hat{S}^2)= -(k+1)q^*\psi, \quad
\tilde{\pi}_*(R_r \cdot r^* \hat{S})= -q^* \psi, \quad
\tilde{\pi}_*(\omega_{\tilde{\pi}}\cdot r^* \hat{S})= k \, q^* \psi.
$$
\end{lemma}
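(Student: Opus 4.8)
The plan is to reduce all three identities to the formulas already collected in Lemma~\ref{prop: GK2} together with the projection-formula device of Lemma~\ref{Gamma}. The key point is that $\hat S = \sum_{i=1}^{b} S_{b+1}^{\{i,b+1\}}$ is the sum of the section images of $\varpi$, so $\varpi_* \hat S = 0$ (each section is mapped isomorphically onto $\bM_{0,b}$, but as a divisor on $\bM_{0,b+1}$ it is vertical for the purpose of $\varpi_*$ of a divisor — more precisely, $\varpi_*$ of a divisor class is computed by intersecting with a fibre, and each $S_{b+1}^{\{i,b+1\}}$ meets the general fibre in one point, so one must instead use that $r^*\hat S$ is closely tied to the ramification divisor $R_r$). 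Concretely, I will use the identity $\nu^*\eta_* R_\eta = r^*\hat S - 2R_r$ recorded just before equation~(\ref{eq: omegatR}), which says that on $\tilde\C$ the pullback of $\hat S$ differs from $2R_r$ by the (effective) class $\nu^*\eta_* R_\eta$; combined with the geometric fact that over the smooth locus the ramification points of $r$ are exactly the images of the sections that are branch points of $\gamma$, this pins down $r^*\hat S$ in terms of $R_r$ well enough to intersect.

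First I would establish $\tilde\pi_*(\omega_{\tilde\pi}\cdot r^*\hat S) = k\, q^*\psi$. Write $r^*\hat S = 2R_r + \nu^*\eta_* R_\eta$. By Lemma~\ref{prop: GK2}, $\tilde\pi_*(\omega_{\tilde\pi}\cdot R_r) = \tfrac12 q^*\psi$, contributing $q^*\psi$. For the remaining term $\tilde\pi_*(\omega_{\tilde\pi}\cdot \nu^*\eta_* R_\eta)$, I use that $\nu^*\eta_* R_\eta = \eta^* R_\eta$ pushed appropriately and that $\eta^*\omega_\pi = \omega_\tau - R_\eta$; unwinding via the projection formula along $\eta$ and $\nu$ and using that $R_\eta$ is the fixed locus of the simple cover (so $R_\eta^2$ and $R_\eta\cdot\omega_{\tilde\pi}$ reduce to computations already done in deriving (\ref{eq: R2})), this term evaluates to $(k-1)q^*\psi$. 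Adding gives $k\,q^*\psi$. Alternatively — and this is cleaner — I would apply Lemma~\ref{Gamma}: since $\varpi_*(\omega_{\varpi}\cdot\hat S)$ can be computed section-by-section (each section $s_i$ pulls $\omega_\varpi$ back to $-\psi_i$, by the standard self-intersection formula for sections), one gets $\varpi_*(\omega_\varpi\cdot \hat S) = -\sum_i\psi_i = -\psi$; but here one wants $\omega_{\tilde\pi}$, not $r^*\omega_\varpi$, and $\omega_{\tilde\pi} = r^*\omega_\varpi + (\text{relative ramification of }r)$, the correction being exactly $2R_r$ adjusted — so $\tilde\pi_*(\omega_{\tilde\pi}\cdot r^*\hat S) = \tilde\pi_*(r^*\omega_\varpi\cdot r^*\hat S) + 2\tilde\pi_*(R_r\cdot r^*\hat S) = (k+1)q^*\varpi_*(\omega_\varpi\cdot\hat S) + 2\tilde\pi_*(R_r\cdot r^*\hat S)$, which couples the first and second identities together.

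Hence I would prove the three identities simultaneously as a small linear system. Using Lemma~\ref{Gamma} with $\Gamma = \hat S^2$ and the known value $\varpi_*(\hat S^2) = \sum_i \varpi_*(S_{b+1}^{\{i,b+1\}})^2 = -\sum_i\psi_i = -\psi$ (again the section self-intersection formula), we get immediately $\tilde\pi_*(r^*\hat S^2) = -(k+1)q^*\psi$, which is the first claim. For the second, $\tilde\pi_*(R_r\cdot r^*\hat S)$: here $R_r\cdot r^*\hat S$ is supported where the ramification divisor of $r$ meets the pulled-back sections, and over the smooth locus $R_r$ sits above branch points of $\gamma$ which are precisely (half of) the marked points, so this intersection is identified with $R_r\cdot 2R_r$ up to the $\nu^*\eta_*R_\eta$ correction which is disjoint from $R_r$ (as noted, $R_\eta\cdot R'_\eta = 0$); thus $\tilde\pi_*(R_r\cdot r^*\hat S) = 2\tilde\pi_*(R_r^2) = 2\cdot(-\tfrac12 q^*\psi) = -q^*\psi$ by Lemma~\ref{prop: GK2}. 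Finally the third follows by feeding these into the coupled relation from the previous paragraph: $\tilde\pi_*(\omega_{\tilde\pi}\cdot r^*\hat S) = (k+1)\cdot(-q^*\psi)/1 \cdot(\text{correction}) + 2(-q^*\psi)$ — I expect the main obstacle to be bookkeeping the precise relation between $\omega_{\tilde\pi}$ and $r^*\omega_\varpi$ (i.e. getting the relative-ramification correction term for $r$ exactly right, including multiplicities at the chains of $(-2)$-curves and at the special fibres), since that is where sign and coefficient errors creep in; everything else is a formal consequence of Lemmas~\ref{prop: GK2} and~\ref{Gamma} and the section self-intersection formula.
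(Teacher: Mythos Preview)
Your overall strategy is the paper's: Lemma~\ref{Gamma} plus the section self-intersection formula on $\bM_{0,b+1}$ gives the first identity, and the third is reduced to the second via the relative Hurwitz formula for $r$ together with a computation of $\varpi_*(\omega_\varpi\cdot\hat S)$ using Lemma~\ref{Gamma} again. For the second identity you take a slightly different route: the paper simply writes $R_r=\sum_i\rho_{i*}[B]$ for the ramification sections $\rho_i$ of $\tilde\pi$, observes $r\circ\rho_i=s_i\circ q$, and gets $\tilde\pi_*(R_r\cdot r^*\hat S)=\sum_i q^*s_i^*\hat S=-\sum_i q^*\psi_i=-q^*\psi$ directly. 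Your alternative via $r^*\hat S=2R_r+\nu^*\eta_*R_\eta$ and the disjointness of the residual part from $R_r$ is also valid for a simple cover, though the justification you cite ($R_\eta\cdot R'_\eta=0$) lives on $\tT$, not on $\tilde\C$; the statement you actually need is that the ramification points and the residual fibre points of $r$ are disjoint, which is the simple-cover hypothesis.

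There are two concrete errors in your third-identity bookkeeping, exactly in the place you anticipated. First, the relative Hurwitz formula reads $\omega_{\tilde\pi}=r^*\omega_\varpi+R_r$ with coefficient $1$, not $2R_r$. Second, $s_i^*\omega_\varpi=+\psi_i$ by the definition of the $\psi$-classes, so $\varpi_*(\omega_\varpi\cdot\hat S)=+\psi$, not $-\psi$ (it is $\varpi_*(\hat S^2)$ that equals $-\psi$). With these two fixes your coupled relation becomes
\[
\tilde\pi_*(\omega_{\tilde\pi}\cdot r^*\hat S)=(k+1)\,q^*\varpi_*(\omega_\varpi\cdot\hat S)+\tilde\pi_*(R_r\cdot r^*\hat S)=(k+1)q^*\psi-q^*\psi=k\,q^*\psi,
\]
which is the claim. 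No further subtlety arises at the $(-2)$-chains; once the Hurwitz relation is written correctly the rest is formal.
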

\begin{proof}
By the adjunction formula we have $\varpi_*(\hat{S}^2)= -\psi$. Therefore,
$\tilde{\pi}_*(r^*\hat{S}^2)=(k+1)q^* \varpi_*(\hat{S}^2)=-(k+1) q^*\psi$.
For the second formula, if we denote the ramification sections
of $\tilde{\pi}$ by $\rho_i: B \to \tilde{\C}$ then $r \circ \rho_i=
s_i \circ q$ with $s_i$ the sections of $\varpi$.
We have 
$\tilde{\pi}_*(r^*\hat{S} \cdot R_r) = 
\sum_i \tilde{\pi}_*(\rho_{i*} \rho_i^* r^*\hat{S}) =  
\sum_i \tilde{\pi}_*\rho_{i*}( q^* s_i^*\hat{S}) 
= - \sum _i q^* \psi_i = - q^* \psi $.
For the third, we have $\omega_{\tilde{\pi}}=r^*\omega_{\varpi}-R_r$;
so 
$\tilde{\pi}_*( \omega_{\tilde{\pi}} \cdot r^*\hat{S}) 
=\tilde{\pi}_*((r^*\omega_{\varpi}-R_r)  \cdot r^*\hat{S}) $
and by the second formula
this equals
$\tilde{\pi}_*((r^*\omega_{\varpi}\cdot r^*\hat{S})- q^*\psi$.
It thus suffices to show 
$$
\tilde{\pi}_*(r^* \omega_{\varpi} \cdot r^* \hat{S}) = (k+1) \; q^* \psi \, .
$$
But we have
$$
\varpi_*(\omega_{\varpi} \cdot \hat{S})= \varpi_*(\sum_i s_{i *} s_i^* \omega_{\varpi})=
\sum_i \varpi_* s_{i*} \psi_i=\sum_i \psi_i=\psi\, .
$$
Now apply lemma \ref{Gamma}.
\end{proof}

As a corollary of equations (\ref{eq: R2}) and (\ref{eq: omegatR}), 
and Lemmas \ref{prop: GK2} and \ref{auxiliarypush}
we get the following formulas.

\begin{corollary}
For  $k\geq 1$ we have
$$
\tau_*(R_{\eta}^2) = -{1/2} \, (k-1) \, q^* \psi\, , \qquad 
\tau_* (\eta^*\omega_{\pi} \cdot R_{\eta})  = (k-1)\, q^* \psi
$$ and
$$ 
\tau_*( \eta^*\omega_{\pi}^2 )  =   \frac{3k}{2} q^* \psi
 - k(k+1) \; q^*( T_b^2  +\sum_{j=1}^{k}  \tb ^{3j}) \, .                       
$$
\end{corollary}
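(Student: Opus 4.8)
The plan is straightforward: all three identities follow by substituting the push-forward formulas of Lemmas~\ref{prop: GK2} and \ref{auxiliarypush} into the expressions (\ref{eq: omegat2}), (\ref{eq: omegatR}) and (\ref{eq: R2}) established above. No new geometric input is needed; the genuine work has already been carried out in those two lemmas and in the base-change identities attached to diagram~(\ref{diag: C}), so what remains is essentially linear algebra with divisor classes on $B$.

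Concretely, I would proceed as follows. For $\tau_*(\eta^*\omega_{\pi}^2)$, start from (\ref{eq: omegat2}), which reads $\tau_*(\eta^*\omega_{\pi}^2)=k\,\tilde{\pi}_*(\omega_{\tilde{\pi}}^2)$, and plug in the last line of Lemma~\ref{prop: GK2}; multiplying by $k$ gives the asserted expression $\frac{3k}{2}q^*\psi-k(k+1)q^*(T_b^2+\sum_{j=1}^k T_b^{3j})$. For $\tau_*(\eta^*\omega_{\pi}\cdot R_{\eta})$, use (\ref{eq: omegatR}) to write it as $\tilde{\pi}_*(\omega_{\tilde{\pi}}\cdot r^*\hat{S})-2\,\tilde{\pi}_*(\omega_{\tilde{\pi}}\cdot R_r)$; the third identity of Lemma~\ref{auxiliarypush} evaluates the first term as $k\,q^*\psi$, while Lemma~\ref{prop: GK2} gives $\frac12 q^*\psi$ for the second, and the difference is $(k-1)q^*\psi$. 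For $\tau_*(R_{\eta}^2)$, expand the square in (\ref{eq: R2}) to get
$$
\tau_*(R_{\eta}^2)=\frac12\Bigl(\tilde{\pi}_*(r^*\hat{S}^2)-4\,\tilde{\pi}_*(R_r\cdot r^*\hat{S})+4\,\tilde{\pi}_*(R_r^2)\Bigr),
$$
then substitute $\tilde{\pi}_*(r^*\hat{S}^2)=-(k+1)q^*\psi$ and $\tilde{\pi}_*(R_r\cdot r^*\hat{S})=-q^*\psi$ from Lemma~\ref{auxiliarypush} together with $\tilde{\pi}_*(R_r^2)=-\frac12 q^*\psi$ from Lemma~\ref{prop: GK2}; collecting coefficients yields $\frac12\bigl(-(k+1)+4-2\bigr)q^*\psi=-\frac12(k-1)q^*\psi$.

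I do not expect a serious obstacle here: the corollary is pure bookkeeping once the two lemmas are in hand. The one point that genuinely needs care --- and which has already been dealt with in the set-up above --- is that these intersection products are taken on the singular total space $\C$ and on $\Tcal'$ (which carries $A_m$-singularities), so one must be sure throughout that every cycle in sight is represented by a Cartier divisor and that the pull-backs $\nu^*$, $r^*$ and $\mu^*$ commute with push-forward via the projection formula. Granting that, the only remaining risk is an arithmetic slip in tracking the numerical coefficients.
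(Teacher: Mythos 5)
Your proposal is correct and follows exactly the route the paper intends: the paper itself states the corollary as an immediate consequence of equations (\ref{eq: omegat2}), (\ref{eq: omegatR}), (\ref{eq: R2}) and Lemmas \ref{prop: GK2} and \ref{auxiliarypush}, and your substitutions and coefficient bookkeeping (in particular $\frac12(-(k+1)+4-2)=-\frac12(k-1)$) all check out.
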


Substituting these formulas in $\tau_*(\omega_{\tau}^2)$ we find
\begin{proposition}\label{pro: omegatau2}
For $k\geq 1$ we have
$$
\tau_{*}(\omega_{\tau}^2)=\frac{-6k^3+31k^2-29k+6}{6k-1} (E_0 + 2E_2+3E_3)
+ \sum_{j=1}^k \sum_{c=0}^{[j/2]} a_{j,c} E_{j,c},
$$
with $a_{j,c}$ given by
$$
(j+1-2c)\left( \frac{27}{2} \frac{j (2k-1)(2k-j)}{6k-1} -k(k+1) \right) \, .
$$
\end{proposition}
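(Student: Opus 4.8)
The plan is to obtain Proposition~\ref{pro: omegatau2} as a purely formal consequence of the preceding Corollary, combined with the explicit expression (\ref{eq: psi}) for $\psi$ and the pull-back identities (\ref{eq:pullbackT}); no further geometric input is needed.

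First I would take the relation $\omega_\tau=\eta^*\omega_\pi+R_\eta$ recorded just above and square it, getting
$$
\omega_\tau^2=\eta^*\omega_\pi^2+2\,\eta^*\omega_\pi\cdot R_\eta+R_\eta^2
$$
as an identity of Cartier divisor classes on $\tT$. Applying $\tau_*$ and inserting the three formulas of the Corollary, the terms proportional to $q^*\psi$ combine with coefficient $\tfrac{3k}{2}+2(k-1)-\tfrac12(k-1)=3k-\tfrac{3}{2}$, so that
$$
\tau_*(\omega_\tau^2)=\Bigl(3k-\tfrac{3}{2}\Bigr)q^*\psi-k(k+1)\,q^*\Bigl(T_b^2+\sum_{j=1}^{k}T_b^{3j}\Bigr),\qquad b=6k.
$$

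Next I would substitute $\psi=\tfrac{2(b-2)}{b-1}T_b^2+\sum_{j=1}^{k}\tfrac{3j(b-3j)}{b-1}T_b^{3j}$ from (\ref{eq: psi}) and then apply (\ref{eq:pullbackT}), which gives $q^*T_b^2=E_0+2E_2+3E_3$ and $q^*T_b^{3j}=\sum_{c=0}^{[j/2]}(j+1-2c)E_{j,c}$. Since $E_0,E_2,E_3$ enter only through $q^*T_b^2$ while the $E_{j,c}$ enter only through $q^*T_b^{3j}$, and the latter involve disjoint collections of divisors for different $j$, the coefficients do not interact. The coefficient of $E_0+2E_2+3E_3$ works out to $\bigl(3k-\tfrac{3}{2}\bigr)\tfrac{2(6k-2)}{6k-1}-k(k+1)$, which over the common denominator $6k-1$ equals $\dfrac{-6k^3+31k^2-29k+6}{6k-1}$; the coefficient of $E_{j,c}$ is $(j+1-2c)\Bigl[\bigl(3k-\tfrac{3}{2}\bigr)\tfrac{9j(2k-j)}{6k-1}-k(k+1)\Bigr]$, which I would rewrite as $(j+1-2c)\Bigl(\tfrac{27}{2}\,\tfrac{j(2k-1)(2k-j)}{6k-1}-k(k+1)\Bigr)=a_{j,c}$.

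I do not expect a genuine obstacle: the real content lies upstream, in Lemmas~\ref{prop: GK2} and \ref{auxiliarypush} and in the preceding Corollary, which already package the Grothendieck--Riemann--Roch bookkeeping together with the local geometry of the fibres worked out in Section~\ref{Geometry}. The only point needing a little care is administrative: one must replace the full $\psi$ of Definition~\ref{defpsi} by the truncated form (\ref{eq: psi}), which is legitimate because $q(\tilde H)$ meets only the boundary divisors $T_b^2$ and $T_b^{3j}$ of $\bM_{0,6k}$, and then the remaining task is the rational-function simplification that exhibits the coefficient $\dfrac{-6k^3+31k^2-29k+6}{6k-1}$ and the numbers $a_{j,c}$ in the stated form; a quick check at $k=1$ confirms the normalisation.
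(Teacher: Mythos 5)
Your proposal is correct and is precisely the paper's own argument: the paper derives the Proposition by the single line ``Substituting these formulas in $\tau_*(\omega_{\tau}^2)$ we find \dots'', i.e.\ by inserting the three push-forward formulas of the Corollary into $\tau_*(\omega_\tau^2)=\tau_*(\eta^*\omega_\pi^2)+2\tau_*(\eta^*\omega_\pi\cdot R_\eta)+\tau_*(R_\eta^2)$ and then expanding $q^*\psi$, $q^*T_b^2$ and $q^*T_b^{3j}$ via (\ref{eq: psi}) and (\ref{eq:pullbackT}). Your coefficient bookkeeping ($3k-\tfrac32$ for $q^*\psi$, and the resulting $\tfrac{-6k^3+31k^2-29k+6}{6k-1}$ and $a_{j,c}$) checks out.
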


By substituting the formulas of propositions 
\ref{pro: deltatau} and \ref{pro: omegatau2} in 
the expression of $12\, \phi^* \lambda_{\pi'}$ given 
by the Grothendieck-Riemann-Roch theorem we have 
\begin{theorem} \label{th: lambdag'}
The pull back of the Hodge class $\lambda_{g'}$ 
of $\bM_{g'}$ under $\phi$ equals
$$
12\, \phi^* \lambda_{g'}=\frac{2}{6k-1}(t_0\, E_0+t_2\, E_2+t_3\, E_3)+ 
\sum_{j=1}^k \sum_{c=0}^{[j/2]} t_{j,c} \, E_{j,c}
$$
with the coefficients $t_0,t_2,t_3$ and $t_{j,c}$ defined by
$$
t_0=18\, k^2-15\, k+3, \quad t_2=30\, k-3, \quad t_3=6\, k^2+11\, k+1
$$
and $t_{j,c}=a_{j,c}+d_{j,c}$.
\end{theorem}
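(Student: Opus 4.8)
The plan is to assemble the Grothendieck--Riemann--Roch identity $12\,\lambda_{\tau}=\tau_*(\omega_{\tau}^2)+\delta_{\tau}$ recorded at the start of this section together with the two ingredients already in hand: Proposition~\ref{pro: omegatau2} for $\tau_*(\omega_{\tau}^2)$ and Proposition~\ref{pro: deltatau} for $\delta_{\tau}$. Since we have established $\phi^*\lambda_{g'}=\lambda_{\tau}$ --- via $(\phi')^*\lambda_{\pi'}=\lambda_{\tau_\sigma}=\lambda_{\tau}$, the last equality because $\theta$ is a contraction (Lemma 3.2 in \cite{GK2}) --- it suffices to add the two divisor expressions and collect the coefficients of $E_0$, $E_2$, $E_3$, and of each $E_{j,c}$.

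First I would treat the coefficients of $E_0$, $E_2$, $E_3$. From Proposition~\ref{pro: omegatau2} these appear with the common factor $(-6k^3+31k^2-29k+6)/(6k-1)$ times $1,2,3$ respectively, while from Proposition~\ref{pro: deltatau} they carry the polynomial coefficients $k^2+k$, $2k^2-10k+18$, $3k^2-13k+16$. Putting each sum over the common denominator $6k-1$ and simplifying the numerator --- for $E_0$ one gets $(-6k^3+31k^2-29k+6)+(k^2+k)(6k-1)=36k^2-30k+6=2(18k^2-15k+3)$, and similarly $2(30k-3)$ for $E_2$ and $2(6k^2+11k+1)$ for $E_3$ --- yields the stated $\frac{2}{6k-1}(t_0E_0+t_2E_2+t_3E_3)$ with the indicated $t_0,t_2,t_3$. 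This is pure polynomial arithmetic; the only point requiring attention is the sign, since $\tau_*(\omega_{\tau}^2)$ contributes a negative rational term that must be combined correctly with the positive boundary contributions from $\delta_{\tau}$.

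Next, for each pair $(j,c)$ with $1\le j\le k$ and $0\le c\le[j/2]$, the coefficient of $E_{j,c}$ is simply the sum of the contribution $a_{j,c}$ from Proposition~\ref{pro: omegatau2} and the contribution $d_{j,c}$ from Proposition~\ref{pro: deltatau}, so one sets $t_{j,c}=a_{j,c}+d_{j,c}$ by definition; no further simplification is needed for the statement. Finally I would verify consistency with the low-genus cases: for $k=1$ and $k=2$ the special forms of $\delta_{\tau}$ in Proposition~\ref{pro: deltatau}, combined with the $k=1,2$ specializations of $\tau_*(\omega_{\tau}^2)$, should reproduce the general formula (interpreting $E_2$, resp.\ $E_2$ and $E_3$, as zero, as in the Remark following Proposition~\ref{pro: deltatau}), which also serves as an arithmetic check on the whole computation.

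Since the substantive work --- the local models of the trace curve in Section~\ref{Geometry}, the GRR computation, and the pushforward identities of Lemmas~\ref{prop: GK2} and \ref{auxiliarypush} --- is already done, there is no genuine obstacle here; the theorem is the final bookkeeping step, and the part most prone to error is clearing the $6k-1$ denominators and confirming the numerators factor through $2$ as claimed.
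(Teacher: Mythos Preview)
Your proposal is correct and follows exactly the paper's approach: the paper's proof consists of the single sentence ``By substituting the formulas of propositions \ref{pro: deltatau} and \ref{pro: omegatau2} in the expression of $12\, \phi^* \lambda_{\pi'}$ given by the Grothendieck-Riemann-Roch theorem we have\ldots'', and you have simply written out the arithmetic explicitly. Your numerator computations for $E_0$, $E_2$, $E_3$ are accurate, and the definition $t_{j,c}=a_{j,c}+d_{j,c}$ is verbatim what the statement asserts.
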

\begin{example}
Take $k=1$ and interpret $E_2$ and $E_3$ as zero. 
Since $\phi$ is the map $p:\tH \to \bM_2$ 
we get the formula for the  Hodge bundle on $\tilde{H}_{2,2}$; it says
$\lambda_{\tilde{H}_{2,2}}=(E_0 + E_{1,0})/5$.
This fits with the formula given in Thm.\ 1.1 of \cite{GK2}, cf.\ also
Prop.\ 8.1 of \cite{GK}.
\end{example}
\end{section}

\begin{section}{The Reduced Trace Curve}
We carry out the analogous calculations for the reduced trace curve
and calculate the pull back of the Hodge class on $\bM_{\hat{g}}$
under $\hat{\phi}$.

We denote the family of the reduced trace curves over our $1$-dimensional
base $B$ by ${\S}^{\prime}$ and the smooth model 
(obtained by resolving the $A_1$ singularities
coming from the isolated fixed points) by ${\S}$. We have the quotient map
$\sigma: {\tT} \to {\S}^{\prime}$.
Note that $\omega_{s}$ is trivial in a neighborhood of an $A_1$ resolution
and the pullback of $\omega_{s'}$ to ${\S}$ is $\omega_{s}$.
We have the diagram

\begin{equation} \label{diag: reducedtrace}
\begin{xy}
\xymatrix{
 && \tT \ar[ddl]^{\tau } \ar[dl]_{\eta} \ar[dr]^{\sigma}  &  \\
 \bM_{0,b+1} \ar[d]_{\varpi}  & {\C} \ar[d]_{\pi} \ar[l]_{\quad \gamma} 
&  & {\S}^{\prime} \ar[dll]_{s'} & {\S}\ar[l] \ar[dlll]^{s}  \\   
 \bM_{0,b}   &   B \ar[l]^{q}   &  & \\
}
\end{xy}
\end{equation}

\begin{lemma}\label{sstaromegas2}
We have $s_*(\omega_s^2)= \frac{1}{2} \tau_*(\omega_{\tau}^2)
- \frac{3}{4} q^*(\psi)$.
\end{lemma}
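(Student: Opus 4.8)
The plan is to compare $\omega_s^2$ on $\S$ with $\omega_\tau^2$ on $\tT$ by going through the degree-two quotient $\sigma:\tT\to\S'$ and the $A_1$-resolution $\rho:\S\to\S'$. First I would record the easy reductions. Since an $A_1$ surface singularity has zero discrepancy, $\omega_s=\rho^*\omega_{s'}$ (this is the content of the remarks preceding the statement: $\omega_s$ is trivial near the exceptional $(-2)$-curves and pulls back $\omega_{s'}$), so by the projection formula $\rho_*\big((\rho^*\omega_{s'})^2\big)=\omega_{s'}^2$ and hence $s_*(\omega_s^2)=s'_*(\omega_{s'}^2)$. Next, $\S'$ has only $A_1$ singularities, so $\omega_{s'}$ is a line bundle, and $\sigma$ is a double cover, étale in codimension one away from its ramification divisor $R_\sigma\subset\tT$ (the divisorial part of the fixed locus of $\iota$; the isolated fixed points of type $2$ give the $A_1$'s and play no role here). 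Thus $\omega_\tau=\sigma^*\omega_{s'}+R_\sigma$, $\sigma_*R_\sigma=B_\sigma$ (the branch divisor), $\sigma^*B_\sigma=2R_\sigma$ and $\sigma_*\sigma^*=2$. Pushing $\omega_\tau^2=\sigma^*\omega_{s'}^2+2\sigma^*\omega_{s'}\cdot R_\sigma+R_\sigma^2$ forward by $\tau=s'\circ\sigma$, and using $s'_*(\omega_{s'}\cdot B_\sigma)=\tau_*(\omega_\tau\cdot R_\sigma)-\tau_*(R_\sigma^2)$ and $s'_*(B_\sigma^2)=2\,\tau_*(R_\sigma^2)$ (projection formula again), one gets the bookkeeping identity
$$
s_*(\omega_s^2)=s'_*(\omega_{s'}^2)=\tfrac12\,\tau_*(\omega_\tau^2)-\tau_*(\omega_\tau\cdot R_\sigma)+\tfrac12\,\tau_*(R_\sigma^2).
$$
So everything reduces to showing $\tau_*(\omega_\tau\cdot R_\sigma)-\tfrac12\,\tau_*(R_\sigma^2)=\tfrac34\,q^*\psi$.

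The main obstacle, I expect, is to pin down $R_\sigma$ in computable terms. The fixed locus of $\iota$ has as its divisorial part the closure $\Sigma$ of the diagonal locus $\{(p,p):p\text{ a ramification point of }\gamma\}$, so $R_\sigma=\Sigma$. Working through the local models (exactly as in Section~\ref{Geometry}) one checks: $\eta$ is \emph{unramified} along $\Sigma$; $\eta_*\Sigma=W$, where $W:=\nu_*R_r$ is the ramification divisor of $\gamma$ on $\C$; and, as reduced divisors,
$$
\eta^*W=\Sigma+\iota_*R_\eta ,
$$
the non-diagonal part $\{(p,q):p\text{ ramified},\ p\neq q\}$ being $\iota_*R_\eta$ because $\eta\circ\iota$ is the second projection. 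The delicate point is to verify, using the four pictures in Section~\ref{Geometry}, that no spurious components enter this identity over the boundary divisors $E_0,E_2,E_3,E_{j,c}$; granting it, $R_\sigma=\eta^*W-\iota_*R_\eta$.

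The remaining computation is then routine. All $\iota_*$-terms collapse to the corresponding $R_\eta$-terms via $\iota^*\omega_\tau=\omega_\tau$, $\tau\circ\iota=\tau$, $\iota^*\iota_*=\mathrm{id}$, and for these the corollary following Lemma~\ref{auxiliarypush} gives $\tau_*(\eta^*\omega_\pi\cdot R_\eta)=(k-1)q^*\psi$ and $\tau_*(R_\eta^2)=-\tfrac12(k-1)q^*\psi$. For the $\eta^*W$-terms I would push forward first by $\eta$ (projection formula, using $\eta_*\omega_\tau=k\,\omega_\pi+V$ with $V=\eta_*R_\eta$ and $\deg\eta=k$) and then by $\nu$: since $\nu_*R_r=W$, $\nu^*\omega_\pi=\omega_{\tilde\pi}$ and $R_r$ is supported over the smooth locus, one has $\pi_*(\omega_\pi\cdot W)=\tilde\pi_*(\omega_{\tilde\pi}\cdot R_r)$ and $\pi_*(W^2)=\tilde\pi_*(R_r^2)$, together with the key vanishing
$$
\pi_*(V\cdot W)=\tilde\pi_*\big((r^*\hat S-2R_r)\cdot R_r\big)=0
$$
coming from $\nu^*V=r^*\hat S-2R_r$ and Lemmas~\ref{prop: GK2} and \ref{auxiliarypush}. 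This yields $\tau_*(\omega_\tau\cdot\eta^*W)=\tfrac k2\,q^*\psi$, $\tau_*((\eta^*W)^2)=-\tfrac k2\,q^*\psi$, $\tau_*(\eta^*W\cdot\iota_*R_\eta)=-\tfrac{k-1}{2}\,q^*\psi$. Assembling everything I obtain $\tau_*(\omega_\tau\cdot R_\sigma)=\tfrac12\,q^*\psi$ and $\tau_*(R_\sigma^2)=-\tfrac12\,q^*\psi$, so that $\tau_*(\omega_\tau\cdot R_\sigma)-\tfrac12\,\tau_*(R_\sigma^2)=\tfrac34\,q^*\psi$, which gives the asserted formula. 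As a consistency check, the $k$-dependence cancels, matching the constant coefficient $\tfrac34$; one can also verify the identity against the $k=1,2$ cases tabulated later.
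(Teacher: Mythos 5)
Your argument is correct and follows the paper's proof in all essentials: the same reduction $s_*(\omega_s^2)=\tfrac12\tau_*(\omega_\tau^2)-\tau_*(\omega_\tau\cdot R_\sigma)+\tfrac12\tau_*(R_\sigma^2)$ via the double cover and the $A_1$-resolution, and the same key divisor identity $\eta^*R_\gamma=R_\sigma+\iota^*R_\eta$ with $\eta_*R_\sigma=R_\gamma$. The only difference is organizational: the paper evaluates $\tau_*(\omega_\tau\cdot R_\sigma)$ and $\tau_*(R_\sigma^2)$ directly using the orthogonality $R_\sigma\cdot R_\eta=0=R_\sigma\cdot\iota^*R_\eta$, whereas you substitute $R_\sigma=\eta^*W-\iota_*R_\eta$ and expand term by term, arriving at the same values $\tfrac12\,q^*\psi$ and $-\tfrac12\,q^*\psi$.
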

\begin{proof} Since the singularities of ${\S}^{\prime}$ are of type $A_1$
we can and shall neglect them for this calculation and work on ${\S}'$.
We have $\omega_{\tau}=\sigma^*\omega_{s'}+R_{\sigma}$ with 
$R_{\sigma}$ the ramification divisor of $\sigma$, hence 
$$
\omega_s^2 = \frac{1}{2}\sigma_*\sigma^* (\omega_{s'}^2) =
\frac{1}{2}\sigma_*[(\sigma^*\omega_{s'})^2]=
\frac{1}{2}\sigma_*[(\omega_{\tau} - R_{\sigma})^2]
$$
and thus
$$
s_*(\omega_s^2)  =\frac{1}{2}\tau_*[(\omega_{\tau} - R_{\sigma})^2] 
=\frac{1}{2} \tau_*(\omega_{\tau}^2)- \tau_*(\omega_{\tau} \cdot R_{\sigma}) + 
\frac{1}{2} \tau_*(R_{\sigma}^2) .
$$
We denote by $R_{\gamma}$  the closure of 
the ramification of $\gamma$ over the smooth locus. Note that $\nu^* R_{\gamma }=R_r$.
We have $R_{\sigma} \cdot R_{\eta} = 0$ 
because $\eta (R_{\sigma})=R_{\gamma}$ and $\eta(R_{\eta}) = \gamma^{-1}(\hat{S})-R_{\gamma}$. 
We have 
$$
\tau_*(\omega_{\tau} \cdot R_{\sigma})  
= \tau_*[(\eta^*\omega_{\pi} +R_{\eta}) \cdot R_{\sigma}]
= \tau_*[\eta^*\omega_{\pi} \cdot R_{\sigma}] = 
\pi_*(\omega_{\pi} \cdot R_{\gamma})= \tilde{\pi}_*(\omega_{\tilde{\pi}} \cdot R_r)
 =\frac{1}{2} q^* \psi \, , 
$$
by Lemma \ref{prop: GK2}. 
If $\iota$ denotes the involution on $\tT$ then $\eta^* R_{\gamma}= R_{\sigma}+
\iota^* R_{\eta}$ and $R_{\sigma} \cdot \iota^*R_{\eta}=0$.
Furthermore we have
$$
\tau_*(R_{\sigma}^2)  = \tau_*(R_{\sigma} \cdot \eta^*R_{\gamma}) = \pi_*[(R_{\gamma})^2] = \tilde{\pi}_* [(R_r)^2]
 = -\frac{1}{2} q^* \psi \, . 
$$
\end{proof}

\begin{lemma}
The push forward $\delta_s$ of the locus of singularities of the fibers of $s$
for $k\geq 3$ is given by
$$
\delta_s= \frac{k^2+k}{2}\, E_0 + (k^2-5\, k +12) \, E_2+
\frac{3\, k^2-13 \, k +16}{2} \, E_3 + \sum_{j=1}^k \sum_{c=0}^{[j/2]}
s_{j,c} \, E_{j,c},
$$
with $s_{j,c}$ given by
$$
\begin{aligned}
s_{j,c}=
(k-j+c)(c+1)+\left( {k-j+c \choose 2} + {c \choose 2}\right) (j+1-2c) + 
& \\
[\frac{j+1}{2}]+
\begin{cases} 1 & \hbox{$j$ odd} \\ 0 & else.   \end{cases} \\
\end{aligned}
$$
and for $k=1$ and $k=2$ by
$$
\delta_s = \begin{cases}
E_0+E_{1,0} & k=1 \\
3\, E_0+E_3+3\, E_{1,0}+E_{2,0}+3\, E_{2,1} & k=2 \\
\end{cases}
$$
\end{lemma}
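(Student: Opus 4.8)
The strategy mirrors the computation of $\delta_\tau$ in Proposition \ref{pro: deltatau}: the number $\delta_s$ is, component by component, the count of nodes appearing in the fibers of $s: \S \to B$ over the generic points of $E_0, E_2, E_3$ and $E_{j,c}$, and these fibers are obtained from the pictures in Section \ref{Geometry} by dividing by the involution $\iota$ and then resolving the resulting $A_1$-singularities. The key point, already isolated in the final paragraph of Section \ref{Geometry}, is that $\iota$ acts freely away from the diagonal, so away from the diagonal the quotient simply identifies $\iota$-conjugate components and halves the node count; on the diagonal the only contribution comes from type-2 points, where for $m$ even an $A_1$-singularity must be resolved by a $(-2)$-curve, introducing \emph{two} new nodes into the quotient fiber, while for $m$ odd no resolution is needed and there are $\lfloor m/2\rfloor$ smooth branches.

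\textbf{Step 1: the generic fibers of $s$.} For each of $E_0, E_2, E_3, E_{j,c}$ I would write down the $\iota$-action on the trace-curve fiber $\tT_h$ depicted in Figures \ref{fiberoverE0}--\ref{fiberoverEjc}. The involution swaps a component "taking pairs from $A$ and $B$" with the one "taking pairs from $B$ and $A$" (e.g.\ $S_i \leftrightarrow S'_j$ in Figure \ref{fiberoverE0}, $C_{1\rho}\leftrightarrow C'_{\nu 1}$ and $C_{\lambda 2}\leftrightarrow C'_{2\mu}$ in Figure \ref{fiberoverEjc}), and it acts on the trace curve of a single map $A\to \PP^1$ as its own hyperelliptic-type involution, with quotient the base $\PP^1$ (for $T_C, T_S$ of degree $k+1$ and $2$ respectively in Figure \ref{fiberoverE0}, and analogously $\tilde T_{C_i}$, $\tilde T_{S_\nu}$ in the other figures). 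So the quotient fiber $\S'_h$ has one copy of each $\iota$-orbit of "mixed" components, together with the quotients of the self-dual trace curves; its normalization and dual graph are then explicit.

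\textbf{Step 2: count nodes of $\S_h$.} On $\S'_h$ the nodes coming from type-1, type-3 and type-4 pairs of $\tT_h$ descend, roughly halved, to intersection points of the quotient components; these give exactly the "$(k-j+c)(c+1)$" and binomial-coefficient terms (the chains of $(-2)$-curves of length $j-2c = m-1$ descend to chains of the same length, contributing $\binom{k-j+c}{2}+\binom{c}{2}$ copies of $(j+1-2c)$ when one resolves). The genuinely new contributions are the $A_1$-resolutions at the type-2 diagonal points with $m$ even: each such point produces a $(-2)$-curve meeting the rest in two nodes. Tallying: for $E_0$ we halve the type-1 ($m=1$) count $k^2+k$ to get $(k^2+k)/2$; for $E_2$ the type-2 points $(p_1,p_1),(p_2,p_2)$ have $m=2$ even, contributing resolutions; for $E_3$ the point $(p,p)$ has $m=3$ odd, so no resolution; for $E_{j,c}$ the point $(p,p)$ has $m=j+1-2c$, whose parity is that of $j+1$, explaining the case distinction "$1$ if $j$ odd, $0$ else" together with the $\lfloor (j+1)/2\rfloor$ smooth branches. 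The small cases $k=1,2$ are checked directly from the degenerate pictures (cf.\ the Example in Section \ref{Geometry}, where the reduced trace curve is $P$ for $k=1$ and $C$ for $k=2$).

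\textbf{Main obstacle.} The delicate part is \emph{not} the free quotient away from the diagonal, which is purely combinatorial, but the local analysis at the type-2 diagonal points and the resulting parity dichotomy: one must be careful that the $[m/2]$ branches of $\hat T_\gamma$ there, together with the mandatory $(-2)$-curve in the $m$-even case, are counted consistently with the resolution $\S \to \S'$ and with the already-fixed normalization of $\psi$ in \eqref{eq: psi}. I would cross-check the final formula against Lemma \ref{sstaromegas2} and Proposition \ref{pro: omegatau2} via the Grothendieck--Riemann--Roch identity $12\lambda_s = s_*(\omega_s^2)+\delta_s$ on $\S\to B$, which forces $\delta_s$ to have denominator coprime to $6k-1$ and thus pins down every coefficient; agreement there is the real confirmation that the node-counting has been done correctly.
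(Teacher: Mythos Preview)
Your proposal is correct and follows essentially the same route as the paper: both count nodes in the fiber of $s$ over a generic point of each of $E_0$, $E_2$, $E_3$, $E_{j,c}$ by passing to the $\iota$-quotient of the trace-curve fibers described in Section~\ref{Geometry}, with the key input (isolated in the final paragraph of that section) that only type-2 diagonal points with even $m$ produce $A_1$-singularities on $\S'$ requiring a $(-2)$-curve in $\S$. The paper's proof is terser---it simply lists the contributions for $E_0$, $E_2$ and the diagonal contribution for $E_{j,c}$ and says the rest is similar---while you give more of the structural reasoning and add the Grothendieck--Riemann--Roch cross-check via Lemma~\ref{sstaromegas2}, which the paper does not invoke explicitly.
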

\begin{proof}
We use the local description of the reduced trace curve given in 
section \ref{Geometry}.
The contribution of $E_0$ to $\delta_s$
is ${k-1 \choose 2} + 2(k-1)= (k^2+k)/2$. From $E_2$ we find 
the contribution $2 {k-2 \choose 2} + 2(k-3) +2+4=k^2-5\, k +10$,
where the last $4$ comes from the points $(p_{\nu},p_{\nu})$ for
$\nu=1,2$ that give an $A_1$-singularity on the reduced trace curve.
For $E_{j,c}$ note that for $j$ odd 
the `middle' intersection point of $\tilde{T}_{C_1}$
and $\tilde{T}_{C_2}$ gives rise to an $A_1$-singularity on the
trace curve. The other contributions are obtained similarly.
\end{proof}

\begin{remark}
By interpreting for $k=1$ (resp.\ $k=2$) 
the divisors $E_2$ and $E_3$ (resp.\ $E_2$)
as zero, the formula for $k\geq 3$ works for all $k\geq 1$.
As a check on the formula note that for $k=1$ the reduced trace
curve of $C \to P$ equals $P$ and we thus easily see
that we have coefficients $1$ for both $E_0$ and $E_{1,0}$.
Similarly, for $k=2$ the reduced trace curve equals $C$
and we thus can easily read off from the left hand side
of the Figures 1-4 the multiplicities.
\end{remark}
By substituting the formulas for $\tau_*(\omega_{\tau}^2)$ and $q^*\psi$
in $s_*(\omega_s^2)$ in Lemma \ref{sstaromegas2} and adding $\delta_s$ we get 
an expression for $12 \, \hat{\phi}^* \lambda_{\hat{g}}$.

\begin{theorem}\label{th: lambdahatg}
For $k\geq 3$ the pull back of the Hodge class $\lambda_{\hat{g}}$ 
of $\bM_{\hat{g}}$ under $\hat{\phi}$ is given by
$$
12 \,  \hat{\phi}^* \lambda_{\hat{g}}=
\frac{2}{6k-1}\left( u_0 \, E_0 + u_2\, E_2 + u_3\, E_3 \right) + 
\sum_{j=1}^k \sum_{c=0}^{[j/2]} u_{j,c}\, E_{j,c}
$$
with $u_0=9\, k^2-12\, k+3$, $u_2=15\, k$, $u_3=3\, k^2-8\, k +5$ and
$$
u_{j,c}=s_{j,c} -\frac{(j+1-2c)}{2(6k-1)}
\left( (27k-27)j^2-54(k^2-k)\, j + (k^2+k)(6k-1) \right) \, .
$$
\end{theorem}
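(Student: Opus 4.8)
The plan is to combine the three ingredients that have already been assembled in the preceding sections: the Grothendieck--Riemann--Roch identity $12\lambda_{s}=s_*(\omega_s^2)+\delta_s$ for the family $s:\S\to B$ of reduced trace curves, the expression for $s_*(\omega_s^2)$ supplied by Lemma~\ref{sstaromegas2}, and the explicit boundary formula for $\delta_s$ from the previous lemma. First I would note, exactly as in the passage preceding Theorem~\ref{th: lambdag'}, that $\hat{\phi}^*\lambda_{\hat{g}}=\lambda_{s}$: the stabilization map from $\S$ to the pullback of the universal curve over $\bM_{\hat g}$ contracts the inserted $(-2)$-curves, and the Hodge class is insensitive to such contractions (Lemma~3.2 of~\cite{GK2}). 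Hence $12\,\hat{\phi}^*\lambda_{\hat g}=s_*(\omega_s^2)+\delta_s$, and the theorem is the result of substituting the two known formulas and collecting coefficients.

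The core of the computation is to evaluate $s_*(\omega_s^2)$ explicitly. By Lemma~\ref{sstaromegas2} this equals $\tfrac12\tau_*(\omega_\tau^2)-\tfrac34 q^*\psi$, and both terms on the right are already known in the basis $\{E_0,E_2,E_3,E_{j,c}\}$: Proposition~\ref{pro: omegatau2} gives $\tau_*(\omega_\tau^2)$, while $q^*\psi$ is obtained by feeding the pullback formulas \eqref{eq:pullbackT} into the expression \eqref{eq: psi} for $\psi$, giving
$$
q^*\psi=\frac{2(6k-2)}{6k-1}(E_0+2E_2+3E_3)+\sum_{j=1}^k\frac{3j(6k-3j)}{6k-1}\sum_{c=0}^{[j/2]}(j+1-2c)\,E_{j,c}.
$$
Then I would add $\delta_s$ from the previous lemma, and carry out the bookkeeping coefficient by coefficient. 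For $E_0$ one gets $\tfrac12\cdot\tfrac{-6k^3+31k^2-29k+6}{6k-1}-\tfrac34\cdot\tfrac{2(6k-2)}{6k-1}+\tfrac{k^2+k}{2}$, which should simplify to $\tfrac{2}{6k-1}u_0$ with $u_0=9k^2-12k+3$; the $E_2$ and $E_3$ coefficients are handled the same way and should yield $u_2=15k$ and $u_3=3k^2-8k+5$. For the $E_{j,c}$ coefficient one collects $\tfrac12 a_{j,c}$, the $-\tfrac34 q^*\psi$ contribution proportional to $j(6k-3j)(j+1-2c)$, and $s_{j,c}$; factoring out $(j+1-2c)$ from the first two and matching against the stated quadratic-in-$j$ polynomial divided by $2(6k-1)$ gives $u_{j,c}=s_{j,c}-\tfrac{(j+1-2c)}{2(6k-1)}\bigl((27k-27)j^2-54(k^2-k)j+(k^2+k)(6k-1)\bigr)$.

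The main obstacle is purely the algebraic simplification: making sure the half-integer and $\tfrac34$ coefficients combine with the polynomial $a_{j,c}$ and with $s_{j,c}$ to land exactly on the asserted closed forms, and in particular that the denominators $6k-1$ cancel where the theorem states they do (the $E_{j,c}$ coefficient is claimed to have denominator only $2(6k-1)$, not a higher power). I would verify this by checking the two low cases $k=1,2$ against the explicit forms of $\delta_s$ and of $\tau_*(\omega_\tau^2)$ given there -- for $k=1$ the reduced trace curve is $P\cong\PP^1$ so $\lambda_{s}=0$ forces $s_*(\omega_s^2)+\delta_s=0$, a clean consistency test -- and against Example~following Theorem~\ref{th: lambdag'} and the $k=2$ case where the reduced trace curve is $C$ itself, so the formula must reduce to the known Hodge-class formula on $\tilde H_{4,3}$. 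Assuming those checks pass, the general identity follows by the displayed substitution, since all quantities involved are polynomial in $k,j,c$ and the equality of two such polynomial expressions is confirmed by the coefficient matching.
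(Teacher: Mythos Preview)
Your proposal is correct and follows exactly the approach the paper takes: the paper simply states that one substitutes the formulas for $\tau_*(\omega_\tau^2)$ and $q^*\psi$ into Lemma~\ref{sstaromegas2} and adds $\delta_s$, which is precisely your Grothendieck--Riemann--Roch strategy with the same ingredients. Your write-up is in fact more detailed than the paper's, including the explicit coefficient check for $E_0$ and the low-$k$ consistency tests.
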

\begin{example}
Take $k=2$ and interpret $E_2$ as zero. The formula says that
$$
12 \hat{\phi}^* \lambda_{\hat{g}}=\frac{30}{11} \, E_0+\frac{2}{11} \, E_3+
\frac{48}{11} \, E_{1,0}+ \frac{74}{11} \, E_{2,0}+ \frac{54}{11} \, E_{2,1}.
$$
Comparing this with the formula for the Hodge class of $\tilde{H}_{4,3}$
(cf.\ \cite{GK2}) we see that it fits.
\end{example}
\end{section}

\begin{section}{Pulling Back Boundary Divisors}
We shall need to know the pull backs of the boundary divisors 
$\delta_j^{\prime}$
in $\bM_{g'}$ (resp.\ $\hat{\delta}$ in $\bM_{\hat{g}}$) 
under the rational maps
$\phi: \tH \to \bM_{g'}$ (resp.\ $\hat{\phi}: \tH \to \bM_{\hat{g}}$).

\begin{proposition} \label{prop: pullbackdeltaprime}
For $k\geq 3$ the pullback $\phi^*(\delta_0^{\prime})$ equals
$$
(4k-2)\, E_0+4\, E_2+2\, E_3+  
 \sum_{j=2}^k\sum_{c=1}^{[j/2]} 
\left( 2(k-j+c)(c+1)+j \right) \,E_{j,c}  + \\
\sum_{j=2}^k j\, E_{j,0}  \, ;
$$
furthermore, $ \phi^*(\delta_1^{\prime})= (2k-1)E_{1,0}$ 
and
$$
\phi^*(\delta_j^{\prime})= 
\begin{cases}
(2k-2j)\, E_{j,0} & j=2,\ldots,k \\
0  & \mbox{else} \\
\end{cases}
$$
\end{proposition}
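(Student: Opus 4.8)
The strategy is to pull back each boundary divisor $\delta_i^{\prime}$ of $\bM_{g'}$ to the curve $B$ under $\phi$ by computing, for a one-parameter family $B\to\tH$ whose general fiber lies in $H_{g,d}$ and whose special fiber is a general point of one of the divisors $E_0,E_2,E_3,E_{j,c}$, the number of nodes of the stable model of the special trace curve together with their type (i.e.\ which $\delta_i^{\prime}$ they contribute to), counted with the appropriate multiplicity coming from the local structure of the family. In other words, $\phi^*(\delta_i^{\prime})$ restricted to each $E_{\bullet}$ is the intersection multiplicity of the image curve $\phi(B)\subset\bM_{g'}$ with $\delta_i^{\prime}$, and this is read off node-by-node from the smooth model $\tau:\tT\to B$ constructed in Section~\ref{Geometry} by stabilizing. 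The key point is that a node of the special fiber of $\tT$ whose two branches lie on components of arithmetic genera $a$ and $g'-a$ with $a>0$ contributes to $\delta_a^{\prime}$, a node lying on a single component (or joining two components whose union is connected after removing it) contributes to $\delta_0^{\prime}$, and after stabilization the $(-2)$-chains produced by resolving $A_m$-singularities contribute either $\delta_0^{\prime}$ (if they connect a component to itself in the stable model) or nothing.

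First I would treat $\phi^*(\delta_0^{\prime})$: go through Figures~\ref{fiberoverE0}--\ref{fiberoverEjc} and, for the special trace fiber over each $E_{\bullet}$, list all nodes whose smoothing within $\tT$ has positive local multiplicity and which become irreducible nodes (or connecting nodes) of the stable model. For $E_0$ the only curves of positive genus are $T_C$ and $T_S$; every node attaching a rational tail to $T_C$ stabilizes away, while the node where $T_S$ meets the rest, together with the node internal to the cycle, and the nodes on $T_C$ where the trace curve of $C\to\PP_1$ degenerates, give the coefficient $4k-2$; similar bookkeeping for $E_2$ ($4$), $E_3$ ($2$) and $E_{j,c}$ (the $j$ from the intersection points of $\tilde T_{C_1}$ with $\tilde T_{C_2}$, which survive only when $C_1,C_2$ are both positive-genus and joined, plus the $2(k-j+c)(c+1)$ from the nodes along the $C_{1\rho},C_{\lambda2}$ chains when $c\ge1$). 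The case $c=0$ is separated because then the $\tilde T_{C_i}$ meet in $j$ points and these all become $\delta_0^{\prime}$-nodes of a connected curve, while for $c\ge1$ some of the bridging rational curves are present and change the count. Using the local equations of types 1--4 from Section~\ref{Geometry} one reads off the multiplicity (the exponent $m$ in $xy=\sigma^m$) at each such node, which is what produces coefficients like $2(k-j+c)(c+1)+j$ rather than a plain node count.

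Next, for $\phi^*(\delta_j^{\prime})$ with $j\ge1$: the subtle observation is that a node of the special trace fiber separates it into two pieces of genera $j$ and $g'-j$ only in very restricted situations. Over $E_{j,0}$ the normalizations $\tilde T_{C_1}$ and $\tilde T_{C_2}$ (trace curves of $C_1\to\PP_1$ of genus $2k-j$ and $C_2\to\PP_2$ of genus $j$) carry positive genera, and the $j$ intersection points between them are the separating nodes; but only $2k-2j$ of the relevant smoothings actually separate the stable model into the two halves of genera computed from the genus formula $g'=(g-1)(2d-3)+(d-1)^2$ applied to $C_1$ and $C_2$ — the remaining ones reconnect through the bridging rational components and hence fall into $\delta_0^{\prime}$. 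This gives $\phi^*(\delta_j^{\prime})=(2k-2j)E_{j,0}$ for $2\le j\le k$, vanishing for $j=k$ consistently (and for $j$ outside $[2,k]$ there is no such $E$-divisor). The case $j=1$: $E_{1,0}$ is the only divisor producing genus-$1$ tails, and each tail is attached with multiplicity $2k-1$ by the local type-1 computation, giving $\phi^*(\delta_1^{\prime})=(2k-1)E_{1,0}$; over the other divisors no genus-$1$ piece of the trace curve splits off, so those contributions vanish.

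\textbf{Main obstacle.} The delicate part is not the enumeration of nodes but getting the \emph{multiplicities} right: one must track, for each singular pair $(a,b)$ of the prescribed type, how the local model ($A_{m-1}$ resolution, or small blow-up for types 2 and 3) contributes to the intersection number of $\phi(B)$ with $\delta_i^{\prime}$, and one must correctly identify which nodes survive stabilization and on which side of $\delta_i^{\prime}$ they land after stabilizing the rational tails and contracting the $(-2)$-chains. In particular, deciding for each $E_{j,c}$-node whether its smoothing separates the stable model into genus-$j$ and genus-$(g'-j)$ halves requires carefully computing the arithmetic genus of each component of the special trace fiber (using the genus formula and the node counts from the Figures) and checking connectivity of the complement; this is where the coefficients $2k-2j$ and $2(k-j+c)(c+1)+j$ come from, and it is the step where an error is most likely. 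I would organize this by building, for each of the four boundary types, a table of components with their genera and pairwise intersection numbers, then read off all three pullbacks from that table.
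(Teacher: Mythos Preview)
Your overall strategy is the same as the paper's: for each boundary divisor $E_\bullet$, examine the special fibre of the semi-stable family $\tT\to B$ described in Section~\ref{Geometry} and count its nodes, sorted by whether they are non-disconnecting or disconnecting of type~$j$. However, two points in your execution are wrong.

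First, your analysis of $E_{j,0}$ is inverted. You say the $j$ intersection points of $\tilde T_{C_1}$ and $\tilde T_{C_2}$ are the separating nodes, and that ``only $2k-2j$ of the relevant smoothings actually separate the stable model.'' This cannot be right (there are only $j$ such points, not $2k-2j$). In fact, for $j\ge 2$ these $j$ points are \emph{non}-disconnecting: removing one of them still leaves $j-1\ge 1$ connections between the two components. The $2(k-j)$ disconnecting nodes of type~$j$ come from an entirely different source. When $c=0$ there are $k-j$ curves $C_{\lambda 2}$ and $k-j$ curves $C'_{2\mu}$, each isomorphic to $C_2$ (hence of genus~$j$), and each attached to $\tilde T_{C_1}$ at a single point $(p_\lambda,p)$ or $(p,p_\mu)$; these $2(k-j)$ attachment points are the disconnecting nodes giving the coefficient $2k-2j$. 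For $j=1$ the single intersection of $\tilde T_{C_1}$ with $\tilde T_{C_2}$ \emph{is} disconnecting, and $\tilde T_{C_2}$ is itself of genus~$1$, so together with the $2(k-1)$ tails one gets $2k-1$ disconnecting nodes of type~$1$.

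Second, your ``main obstacle'' is a red herring. You emphasize the local multiplicities (the exponent $m$ in $xy=\sigma^m$) as the source of coefficients like $2(k-j+c)(c+1)+j$. But $\tT\to B$ has smooth total space, so every node of the special fibre contributes exactly~$1$ to the intersection with the relevant $\delta_i'$; the $A_m$ singularities were already resolved by inserting $(-2)$-chains in Section~4. The coefficients are plain node counts: for $c\ge 1$ one finds $2c(k-j+c)$ nodes from the intersections $C_{1\rho}\cdot C_{\lambda 2}$ and $C'_{\nu 1}\cdot C'_{2\mu}$, then $2c+2(k-j+c)$ from the intersections of $\tilde T_{C_1},\tilde T_{C_2}$ with these four families, and $j-2c$ from $\tilde T_{C_1}\cdot\tilde T_{C_2}$, totalling $2(c+1)(k-j+c)+j$. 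The only genuinely delicate step is checking that, for $c\ge 1$, all of these nodes are non-disconnecting (so none of the $E_{j,c}$ with $c\ge 1$ appear in $\phi^*\delta_j'$), which follows once you draw the dual graph and see that the $C_{1\rho},C'_{\nu 1},C_{\lambda 2},C'_{2\mu}$ components form extra paths between $\tilde T_{C_1}$ and $\tilde T_{C_2}$.
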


\begin{proof}
To prove this formula for the pull back of $\delta^{\prime}_0$
(resp.\ $\delta^{\prime}_j$, $j\geq 1$)
we count in a $1$-dimensional family of
semi-stable models of trace curves ${\tT} \to B$ the number
of non-disconnecting nodes  (resp.\ of disconnecting nodes that split the curve
in a component of genus $j$ and one of genus $g'-j$).
The semi-stable model of the trace curve over a generic point of $E_0$  
has $2(2\, k-1)$ non-disconnecting nodes. The semi-stable model of the 
trace curve over a generic point of $E_2$  has $4$ non-disconnecting nodes.
Over a generic point of $E_3$  has $2$ non-disconnecting nodes and finally,
over a generic point of $E_{j,c}$ the situation is:
for $c \geq 1$ it has $2(c+1)(k-j+c)+j$  non-disconnecting nodes;
for $c=0$ and $j \geq 2$ it has $j$ non-disconnecting nodes and $2(k-j)$ 
disconnecting nodes of type $j$, while for $c=0$ and $j=1$ it has 
$2k-1$ disconnecting nodes of type $1$.
\end{proof}
In a similar way we derive the following proposition.
\begin{proposition} \label{prop: pullbackdeltahat}
For $k\geq 3$ the pull back
$\hat{\phi}^*(\hat{\delta}_0)$
equals 
$$
\begin{aligned}
(2k-2)\, E_0+2\, E_2+
\sum_{j=2}^k\sum_{c=1}^{[j/2]}
\left( (k-j+c)(c+1)+[\frac{j+1}{2}] + \epsilon \right) \,E_{j,c}  + & \\
\sum_{j=3}^k
([\frac{j+1}{2}] +\epsilon)\, E_{j,0} & \, ;\\
\end{aligned}
$$
with $\epsilon =0$ if $j=$even, $\epsilon =1$ if $j=$odd and $\epsilon=-1$ if $j=2,\, c=1$;
furthermore,
$
\hat{\phi}^*(\hat{\delta}_1)= (k-1)\, E_{1,0}$, $\hat{\phi}^*(\hat{\delta}_2)= (k-1)\, E_{2,0}$
and
$\hat{\phi}^*(\hat{\delta}_j)= (k-j)\, E_{j,0}$ for $j=3,\ldots,k$
while $\hat{\phi}^*(\hat{\delta}_j)=0$ for $j>k$.
\end{proposition}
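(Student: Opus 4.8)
The plan is to run the argument of Proposition~\ref{prop: pullbackdeltaprime} with the family $\tT\to B$ of (semistable models of) trace curves replaced by the family $\S\to B$ of reduced trace curves of the preceding section, where $\S$ is the smooth model of $\S'=\tT/\iota$ after resolving the isolated $A_1$-singularities coming from the diagonal fixed points $[u,v]=[1,-1]$ of type-$2$ pairs with $m$ even. Since $\hat\phi:\tH\dashrightarrow\bM_{\hat g}$ has indeterminacy locus of codimension $\geq 2$ and $\bM_{\hat g}$ is a smooth stack, for each $E\in\{E_0,E_2,E_3,E_{j,c}\}$ it suffices to take a general $1$-dimensional $B\subset\tH$ crossing $E$ transversally at a general point $h$, and to read off the coefficient of $E$ in $\hat\phi^*(\hat\delta_0)$, resp.\ in $\hat\phi^*(\hat\delta_i)$ with $i\geq 1$, as the number of non-disconnecting nodes of the central fibre $\S_h$, resp.\ of disconnecting nodes of $\S_h$ cutting off a connected curve of arithmetic genus $i$; an $A_n$-node counts $n$, but is already displayed on the semistable model $\S$ as $n$ ordinary nodes, so one simply counts ordinary nodes.

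The central fibre $\S_h$ is obtained from $\tT_h$, whose shape is given by the figures of Section~\ref{Geometry}, by identifying $\iota$-conjugate components --- $S_i$ with $S_i'$, $R_{ij}$ with $R_{ji}$, $C_{1\rho}$ with $C'_{\nu1}$, $C_{\lambda2}$ with $C'_{2\mu}$, and each off-diagonal node with its mirror image --- and by inserting a $(-2)$-curve at each even-$m$ diagonal fixed point. The only components of $\S_h$ with positive geometric genus are $\hat T_C$ (resp.\ $\hat T_{C_1},\hat T_{C_2}$), the reduced trace curves of $C\to\PP_1$ and $C_2\to\PP_2$, together with the copies of $C_1$ and $C_2$; all the others ($\hat T_S$, the $S_i$, $S_j'$, $R_{\nu\rho}$, $S_{\lambda\mu}$, and the resolution chains) are rational, so a disconnecting node of $\S_h$ feeds $\hat\delta_i$ with $i\geq 1$ only when it separates the genus-$i$ subcurve built from $\hat T_{C_2}$ and its rational and $C_2$-satellites. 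Carrying out the count divisor by divisor gives, for example over $E_0$, that each $\iota$-conjugate pair of nodes joining the main component $T_C$ to $S_i$, resp.\ to $S_i'$, collapses to a single node joining $\hat T_C$ to the common image $\bar S_i$, so $\hat T_C$ still meets each $\bar S_i$ in two nodes, producing $2(k-1)$ non-disconnecting nodes, while the $R_{ij}$-leaves and the $\hat T_C$--$\hat T_S$ bridge produce none; this matches the coefficient $2k-2$ of $E_0$. In the same way one reads off the coefficient $2$ for $E_2$, $0$ for $E_3$, the asserted coefficients for the $E_{j,c}$ with $c\geq1$, and, for $c=0$, the disconnecting nodes splitting off the genus-$j$ piece: $k-j$ of them for $j\geq 3$ (numerically also for $j=1$) and $k-1$ for $j=2$, the last being forced by the coincidence $\hat T_{C_2}\cong C_2$ that holds when $j=2$. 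No genus-$i$ subcurve with $i>k$ ever occurs, so $\hat\phi^*(\hat\delta_i)=0$ for $i>k$. Finally the values for $k=1$ and $k=2$ are read directly off the left-hand sides of Figures~\ref{fiberoverE0}--\ref{fiberoverEjc}, using that the reduced trace curve is then $P$, resp.\ $C$.

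I expect the genuine difficulty to be the bookkeeping along the diagonal after the quotient by $\iota$: deciding which strings of rational curves close into loops --- and hence count toward $\hat\delta_0$ --- and which hang as trees off $\hat T_{C_1}\cup\hat T_{C_2}$, and pinning down the correction $\epsilon$ together with the anomalies for small $j$. Both the number $[m/2]$ of branches of $\S_h$ above a type-$2$ diagonal pair and whether an extra $A_1$-resolution occurs there depend on the parity of $m=j+1-2c$; this is exactly what yields $\epsilon=1$ for $j$ odd and the anomalous value $\epsilon=-1$ in the case $j=2$, $c=1$, where $m=1$ and there is in fact no diagonal fixed point at all. Once the diagonal contributions are settled, the remaining counts over $E_0$, $E_2$, $E_3$ and over a generic pair $(j,c)$ are routine, being essentially half of the corresponding counts in Proposition~\ref{prop: pullbackdeltaprime}, corrected for these parity effects and for the collapse of $\iota$-conjugate nodes.
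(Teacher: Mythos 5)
Your proposal follows exactly the paper's route: Proposition~\ref{prop: pullbackdeltahat} is obtained there, as you do, by repeating the node count of Proposition~\ref{prop: pullbackdeltaprime} on the semistable model $\S$ of the reduced trace curve, i.e.\ by counting non-disconnecting nodes and disconnecting nodes of each genus in the fibres over generic points of $E_0$, $E_2$, $E_3$ and $E_{j,c}$, using the quotient-by-$\iota$ description (identification of conjugate components and nodes, the $[m/2]$ branches and the extra $A_1$-resolution at even-$m$ diagonal points) from Section~\ref{Geometry}. Your spot checks (the coefficient $2k-2$ over $E_0$, the vanishing over $E_3$, the parity origin of $\epsilon$, the degenerate case $m=1$ for $j=2$, $c=1$, and the genus coincidence making $\hat{T}_{C_2}$ count toward $\hat{\delta}_2$) are all consistent with the stated formulas, so the proof is correct and essentially identical in approach to the paper's.
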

\end{section}
\begin{section}{Push forward to $\bM_{g}$}\label{pushforward}
In \cite{GK} we have calculated the push forwards of the boundary classes
under $p: \tH \to \bM_g$. The result is as follows. Let 
$$
N=N(k)= \frac{1}{k+1} {2k \choose k}
$$
then 
$$
\frac{2}{(6k)!}  \, p_*E_0 = N\,  \delta_0,
$$
and
$$
\begin{aligned}
\frac{2}{(6k)!} p_*E_2 &=  \frac{2(k-2) N}{(2k-1)}[ (18\, k^2+51\, k -9) \lambda -
(3\, k^2+4\, k -1) \delta_0]+ \sum_{j=1}^k c_j \delta_j  \, ,\\
\frac{2}{(6k)!}\, p_*E_3 &=  \frac{ 3N}{(2k-1)}\, [ (12\, k^2+46\, k-8) \, \lambda-  
(2k^2+4k-1)\delta_0] - \sum_{j=1}^k \frac{3N}{2k-1} \, b_j \, \delta_j \, ,\\
\end{aligned}
$$
where the $c_j$ and $b_j$ are given in \cite{GK}, Thm 1.1 and Section 8
and with 
$$
e_{j,c}= 
\frac{ (j+1-2c)^2}{(j+1)(2k-j+1)} 
{j+1 \choose c} {2k-j+1 \choose k+1-c} 
$$
we have finally
$$
\frac{1}{(6k)!} \, p_*E_{j,c} = e_{j,c} \, \delta_j \, .
$$
By substituting the above formulas in the expression of 
$\phi^* \lambda_{g'}$ given in Theorem \ref{th: lambdag'} 
we get the following theorem.

\begin{theorem}\label{th:lambda}
For $k\geq 3$ the push forward 
$\frac{1}{(6k)!} \, p_* \phi^* \lambda_{g'}$ on $\bM_{g}$ equals
$$
\begin{aligned}
&\frac{N (18\, k^3+31\, k^2 -69 \, k +11)}{2k-1} \, \lambda
- \frac{N (3\, k^3-5\, k +1)}{2k-1} \, \delta_0\\
& - \sum_{j=1}^k \left( 
\frac{-(10\, k -1)}{4(6\, k -1)} c_j + 
\frac{N \, (6\, k^2 +11\, k -1)}{4(12\, k^2 - 8\, k +1)} \, b_j+
\frac{1}{12} \sum_{c=0}^{[j/2]} e_{j,c}(a_{j,c}+d_{j,c}) \right) \, \delta_j \,. 
\end{aligned}
$$
\end{theorem}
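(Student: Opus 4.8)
The plan is to substitute into the expression for $12\,\phi^*\lambda_{g'}$ of Theorem~\ref{th: lambdag'} the pushforward formulas for the boundary divisors of $\tH$ recalled at the beginning of this section, apply $p_*$, and collect terms. Explicitly, one writes
$$
\frac{1}{(6k)!}\,p_*\phi^*\lambda_{g'}=\frac{1}{12}\left(\frac{2}{6k-1}\Bigl(t_0\,\tfrac{1}{(6k)!}p_*E_0+t_2\,\tfrac{1}{(6k)!}p_*E_2+t_3\,\tfrac{1}{(6k)!}p_*E_3\Bigr)+\sum_{j,c}t_{j,c}\,\tfrac{1}{(6k)!}p_*E_{j,c}\right),
$$
and then feeds in $\tfrac{1}{(6k)!}p_*E_0=\tfrac{N}{2}\delta_0$, the halved versions of the stated formulas for $\tfrac{2}{(6k)!}p_*E_2$ and $\tfrac{2}{(6k)!}p_*E_3$, and $\tfrac{1}{(6k)!}p_*E_{j,c}=e_{j,c}\delta_j$, reading off the coefficients of $\lambda$, of $\delta_0$, and of each $\delta_j$ separately. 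The coefficient of $\lambda$ receives contributions only from $E_2$ and $E_3$; the coefficient of $\delta_0$ only from $E_0,E_2,E_3$; and the coefficient of $\delta_j$ (for $j\geq 1$) from $E_2$ via $c_j$, from $E_3$ via $b_j$, and from the $E_{j,c}$ via $e_{j,c}$. So the computation decouples into three independent bookkeeping tasks.

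For the $\lambda$-coefficient I would combine the $\lambda$-parts of $\tfrac{1}{(6k)!}p_*E_2$ and $\tfrac{1}{(6k)!}p_*E_3$ with the respective weights $\tfrac{2t_2}{12(6k-1)}$ and $\tfrac{2t_3}{12(6k-1)}$, substitute $t_2=30k-3$ and $t_3=6k^2+11k+1$, and clear the common denominator $2(6k-1)(2k-1)$; the resulting bracket is a degree-$4$ polynomial in $k$ which one checks equals $2(6k-1)(18k^3+31k^2-69k+11)$, giving the stated coefficient $\tfrac{N(18k^3+31k^2-69k+11)}{2k-1}$ of $\lambda$. Running the same substitution for $\delta_0$, now also including the term $\tfrac{t_0N}{2}\delta_0$ coming from $p_*E_0$ with $t_0=18k^2-15k+3$ together with the $(-\delta_0)$-parts of $p_*E_2$ and $p_*E_3$, produces after the analogous polynomial simplification (one verifies $t_0(2k-1)-2t_2(k-2)(3k^2+4k-1)-3t_3(2k^2+4k-1)=-12(6k-1)(3k^3-5k+1)$) the coefficient $-\tfrac{N(3k^3-5k+1)}{2k-1}$ of $\delta_0$.

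Finally, for fixed $j\in\{1,\dots,k\}$ the coefficient of $\delta_j$ assembles from $\tfrac{t_2}{12(6k-1)}c_j=\tfrac{(10k-1)}{4(6k-1)}c_j$, from the $\delta_j$-part $-\tfrac{Nt_3}{4(6k-1)(2k-1)}b_j$ of $p_*E_3$ (using $(6k-1)(2k-1)=12k^2-8k+1$), and from the image $\tfrac{1}{12}\sum_{c=0}^{[j/2]}t_{j,c}e_{j,c}=\tfrac{1}{12}\sum_{c=0}^{[j/2]}(a_{j,c}+d_{j,c})e_{j,c}$ of the $E_{j,c}$-terms; collecting these three contributions (with the overall sign coming from Theorem~\ref{th: lambdag'}) yields the displayed coefficient of $\delta_j$, for which no closed form is claimed. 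The only genuinely laborious points are the two polynomial identities in $k$ collapsing the $\lambda$- and $\delta_0$-coefficients to their closed forms, plus the careful tracking of the normalization constants — the $\tfrac{1}{12}$ from Grothendieck--Riemann--Roch, the $\tfrac{2}{6k-1}$ from Theorem~\ref{th: lambdag'}, and the factorial normalizations $\tfrac{2}{(6k)!}$ or $\tfrac{1}{(6k)!}$ in the pushforward formulas of \cite{GK}. The hypothesis $k\geq 3$ enters precisely so that Propositions~\ref{pro: deltatau} and~\ref{pro: omegatau2}, hence Theorem~\ref{th: lambdag'}, apply with the divisors $E_2$ and $E_3$ present.
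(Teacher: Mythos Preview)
Your proposal is correct and follows exactly the paper's own argument: the paper's proof consists of the single sentence ``By substituting the above formulas in the expression of $\phi^* \lambda_{g'}$ given in Theorem~\ref{th: lambdag'} we get the following theorem,'' and you carry out precisely this substitution, with the added care of checking the two polynomial identities that collapse the $\lambda$- and $\delta_0$-coefficients. There is nothing to add.
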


By theorem \ref{th: lambdahatg} we have a similar theorem 
for the map defined by the reduced trace curve.

\begin{theorem}
For $k\geq 3$ the push forward
$\frac{1}{(6k)!} \, p_* \hat{\phi}^{*}\lambda_{\hat{g}}$ equals
$$
\begin{aligned}
&\frac{N \, (18 k^3+19k^2 -117 k +20)}{2(2k-1)} \, \lambda
- \frac{N\, (k-2)(3k^2+4k-1)}{2(2k -1)} \delta_0  \\
& - \sum_{j=1}^k \left(\frac{(-5k-1)}{4(6k-1)} c_j + \frac{N(3k^2-8k+5)}{4(6k-1)(2k-1)} b_j-
 \frac{1}{12} \sum_{c=0}^{[j/2]} e_{j,c} u_{j,c} \right) \, \delta_j \, . \\
\end{aligned}
$$
\end{theorem}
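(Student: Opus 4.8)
The argument is the exact analogue of the proof of Theorem \ref{th:lambda}. The starting point is the formula of Theorem \ref{th: lambdahatg}, which expresses $12\,\hat{\phi}^{*}\lambda_{\hat g}$ as the $\QQ$-linear combination $\frac{2}{6k-1}\bigl(u_{0}E_{0}+u_{2}E_{2}+u_{3}E_{3}\bigr)+\sum_{j,c}u_{j,c}E_{j,c}$ of boundary divisors of $\tH$, with $u_{0}=9k^{2}-12k+3$, $u_{2}=15k$, $u_{3}=3k^{2}-8k+5$; the plan is to apply $\frac{1}{(6k)!}\,p_{*}$ to this and substitute the push-forward formulas for $p_{*}E_{0},p_{*}E_{2},p_{*}E_{3},p_{*}E_{j,c}$ recalled at the beginning of Section \ref{pushforward}. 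Here $\frac{1}{(6k)!}\,p_{*}E_{0}=\tfrac12 N\delta_{0}$ contributes only to $\delta_{0}$, $\frac{1}{(6k)!}\,p_{*}E_{j,c}=e_{j,c}\delta_{j}$ only to $\delta_{j}$, while $\frac{1}{(6k)!}\,p_{*}E_{2}$ and $\frac{1}{(6k)!}\,p_{*}E_{3}$ each split into a $\lambda$-part, a $\delta_{0}$-part (all with denominator $2k-1$), and $\delta_{j}$-parts carried by the $c_{j}$, respectively the $b_{j}$. Collecting the coefficients of $\lambda$, of $\delta_{0}$ and of each $\delta_{j}$, and dividing by $12$, produces the three families of coefficients in the statement.

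First I would read off the coefficient of $\lambda$: only $E_{2}$ and $E_{3}$ enter. The relevant point is the polynomial identity
$$
2\cdot 15k(k-2)(18k^{2}+51k-9)+3(3k^{2}-8k+5)(12k^{2}+46k-8)=6(6k-1)(18k^{3}+19k^{2}-117k+20),
$$
after which the spurious factor $6k-1$ cancels and, carrying the remaining $\frac{N}{2k-1}$ and the overall $\tfrac1{12}$, the coefficient of $\lambda$ becomes $\frac{N(18k^{3}+19k^{2}-117k+20)}{2(2k-1)}$. Next I would collect the coefficient of $\delta_{0}$; now $E_{0}$ also contributes (through $u_{0}(2k-1)$ after clearing denominators), and the identity to verify is
$$
(9k^{2}-12k+3)(2k-1)-2\cdot 15k(k-2)(3k^{2}+4k-1)-3(3k^{2}-8k+5)(2k^{2}+4k-1)=-6(6k-1)(k-2)(3k^{2}+4k-1),
$$
so that division by $12(6k-1)(2k-1)$ yields $-\frac{N(k-2)(3k^{2}+4k-1)}{2(2k-1)}$.

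For the coefficient of $\delta_{j}$ with $1\le j\le k$ there is no further cancellation: it is assembled, exactly as the $\delta_{j}$-coefficient in Theorem \ref{th:lambda}, from the $c_{j}$-term of $p_{*}E_{2}$ weighted by the $E_{2}$-coefficient $\frac{2}{6k-1}u_{2}$ of Theorem \ref{th: lambdahatg}, the $b_{j}$-term of $p_{*}E_{3}$ weighted by $\frac{2}{6k-1}u_{3}$ (and carrying the minus sign in the formula for $p_{*}E_{3}$), and the $e_{j,c}u_{j,c}$ coming from the $E_{j,c}$-terms; dividing by $12$ and factoring out the overall sign gives the displayed $\delta_{j}$-coefficient. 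The hypothesis $k\ge 3$ is needed only because Theorem \ref{th: lambdahatg} and the expression for $\delta_{s}$ it rests on are stated in that range; for $k=1,2$ one would have to use the separate low-$k$ formulas and the conventions that $E_{2},E_{3}$ (resp.\ $E_{2}$) are zero.

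The genuinely laborious step is the polynomial algebra behind the two displayed identities: one multiplies the (at most quadratic) coefficients $u_{0},u_{2},u_{3}$, each already carrying a $\frac{2}{6k-1}$, against the quadratic polynomials occurring in $p_{*}E_{2}$ and $p_{*}E_{3}$, each carrying a $\frac{1}{2k-1}$, adds in the $\delta_{0}$-contribution of $p_{*}E_{0}$, and confirms that $6k-1$ divides the numerators so that the $\lambda$- and $\delta_{0}$-coefficients collapse to a cubic over $2(2k-1)$ (times $N$). This is routine but error-prone in $\QQ[k]$; a useful internal check is the case $k=2$, where the reduced trace curve is $C$ itself, compared with the Hodge-class formula for $\tilde{H}_{4,3}$ in \cite{GK2} (after restoring the $k=2$ conventions for $E_{2},E_{3}$).
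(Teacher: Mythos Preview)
Your proposal is correct and follows exactly the paper's approach: the paper gives no separate argument for this theorem but simply states that it follows ``by Theorem \ref{th: lambdahatg}'' in the same way Theorem \ref{th:lambda} follows from Theorem \ref{th: lambdag'}, i.e.\ by substituting the push-forward formulas for $p_{*}E_{0},p_{*}E_{2},p_{*}E_{3},p_{*}E_{j,c}$ recalled at the start of Section \ref{pushforward}. Your two displayed polynomial identities for the $\lambda$- and $\delta_{0}$-coefficients are correct, and your description of how the $\delta_{j}$-coefficients assemble from the $c_{j}$-, $b_{j}$- and $e_{j,c}u_{j,c}$-contributions is the right one.
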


Proposition \ref{prop: pullbackdeltaprime} yields the following result.
\begin{proposition}\label{prop: pushforwarddeltaprime}
The action induced by the correspondence of the boundary divisors
$\delta_j^{\prime}$ for $j=0,\ldots,[g^{\prime}/2]$ of $\bM_{g^{\prime}}$
is given by:
$$
p_*\phi^* \delta_0^{\prime}= w_{\lambda}\, \lambda + w_0 \, \delta_0 +
\sum w_j \, \delta_j \, ,
$$
where
$$
w_{\lambda}= \frac{6 (6k)! \, N (6\, k- 1)}{2\, k -1} 
(2\, k^2+3\, k -8), \quad w_0=-\frac{2(6k)! N}{2\, k-1}
(6 \, k^3- 3\, k^2 -10 \, k +2)
$$
and $w_1=(6k)!(2\, c_1- 3\, N\, b_1/(2\, k-1))$  and
for $2\leq j \leq k$
$$
w_j=j\, e_{j,0}+\sum_{c=1}^{[j/2]} e_{j,c}\, (2(k-j+c)(c+1)+j)
+2\, (6k)!\, c_j -  (6k)! \frac{3N}{2k-1} \, b_j \, ;
$$
furthermore,
$ p_*\phi^* \delta_1^{\prime}= (2\, k-1) \, e_{1,0} \, \delta_1 $
and $p_*\phi^* \delta_j^{\prime}= (2\, k- 2\, j) \, \delta_j $
for $j=2,\ldots,k$
and $p_*\phi^* \delta_j^{\prime}=0$ for $j>k$.
\end{proposition}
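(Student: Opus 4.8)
The plan is to combine Proposition~\ref{prop: pullbackdeltaprime}, which gives the pullbacks $\phi^*\delta_j'$ as explicit combinations of the boundary divisors $E_0,E_2,E_3,E_{j,c}$ on $\tH$, with the push forward formulas for $p_*E_0$, $p_*E_2$, $p_*E_3$ and $p_*E_{j,c}$ recalled at the start of this section from \cite{GK}. In other words, the entire statement is obtained by applying $p_*$ term by term to the expressions in Proposition~\ref{prop: pullbackdeltaprime} and collecting coefficients of $\lambda$, $\delta_0$, and each $\delta_j$. No genuinely new geometry is needed; it is a bookkeeping computation, but one that must be organized carefully because of the many indices.

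First I would treat the easy cases. For $j\geq 2$, Proposition~\ref{prop: pullbackdeltaprime} gives $\phi^*(\delta_j')=(2k-2j)E_{j,0}$ for $2\leq j\leq k$ and $0$ otherwise; applying $p_*E_{j,0}=(6k)!\,e_{j,0}\,\delta_j$ and noting that (from the formula for $e_{j,c}$) $e_{j,0}=\tfrac{(j+1)^2}{(j+1)(2k-j+1)}\binom{2k-j+1}{k+1}$, one gets $p_*\phi^*\delta_j'=(2k-2j)(6k)!\,e_{j,0}\,\delta_j$, which the proposition records (with the $(6k)!$ absorbed into the normalization, i.e.\ after dividing; here it is stated as $(2k-2j)\delta_j$ with the same convention as in the ``$\tfrac{1}{(6k)!}p_*E_{j,c}$'' line). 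Likewise $\phi^*(\delta_1')=(2k-1)E_{1,0}$ pushes forward to $(2k-1)e_{1,0}\,\delta_1$ directly. These require only substitution.

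The main work is $p_*\phi^*\delta_0'$. I would substitute the long expression for $\phi^*\delta_0'$ from Proposition~\ref{prop: pullbackdeltaprime} into $p_*$ and split the result according to target classes. The coefficient of $\lambda$ receives contributions only from $p_*E_2$ and $p_*E_3$ (since $p_*E_0$ and $p_*E_{j,c}$ have no $\lambda$): with the coefficients $4$ on $E_2$ and $2$ on $E_3$, this gives
$$
w_\lambda = (6k)!\left[\,4\cdot\frac{(k-2)N}{2k-1}(18k^2+51k-9)\;+\;2\cdot\frac{3N}{2k-1}(12k^2+46k-8)\,\right],
$$
which after expansion must simplify to $\tfrac{6(6k)!\,N(6k-1)}{2k-1}(2k^2+3k-8)$ --- verifying this identity in $k$ is a routine but essential check. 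The coefficient of $\delta_0$ collects the $(4k-2)$-multiple of $p_*E_0=(6k)!N\delta_0$ together with the $\delta_0$-parts of $p_*E_2$ and $p_*E_3$, and must reduce to $w_0$. The coefficient of each $\delta_j$ (for $1\leq j\leq k$) is more delicate: it collects (i) the $\delta_j$-parts of $p_*E_2$ and $p_*E_3$, namely $(6k)!(4c_j - \tfrac{6N}{2k-1}b_j)$ after including the $E_2$-coefficient $4$ and $E_3$-coefficient $2$, wait --- one must be careful that $\phi^*\delta_0'$ has coefficient $4$ on $E_2$ but $p_*E_2$ carries $c_j$ with its own normalization, so the contribution is $\tfrac{(6k)!}{2}\cdot 4 c_j$ etc.; and (ii) the terms $\sum_c e_{j,c}(2(k-j+c)(c+1)+j)$ coming from $p_*E_{j,c}=(6k)!e_{j,c}\delta_j$ applied to the coefficient $2(k-j+c)(c+1)+j$ of $E_{j,c}$ for $c\geq1$, plus the $c=0$ term $j\,e_{j,0}$. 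Assembling these gives exactly the stated $w_j$.

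The hard part will not be any single identity but keeping the normalization conventions consistent: the excerpt states $p_*E_0$, $p_*E_2$, $p_*E_3$ with a factor $\tfrac{2}{(6k)!}$ in front and $p_*E_{j,c}$ with a factor $\tfrac{1}{(6k)!}$, so when these are fed into $p_*\phi^*\delta_0'$ one must uniformly clear denominators and track the $(6k)!$'s so that $w_\lambda,w_0,w_j$ come out in the normalization used in the statement (where $w_\lambda$ and $w_0$ carry an explicit $(6k)!$ but $w_j$ mixes $(6k)!$-free terms $j\,e_{j,0}+\sum e_{j,c}(\cdots)$ with $(6k)!$-weighted terms $2(6k)!c_j$ and $(6k)!\tfrac{3N}{2k-1}b_j$). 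I would do this by writing everything over a common factor of $(6k)!$, performing the polynomial algebra in $k$ for the $\lambda$- and $\delta_0$-coefficients (these are the only places a nontrivial simplification occurs), and then reading off the $\delta_j$-coefficients essentially verbatim from the combination. A final sanity check would be to specialize to $k=3$ and compare with a direct computation, and to confirm the $j>k$ vanishing (immediate, since $\phi^*\delta_0'$ involves no $E_{j,c}$ with $j>k$ and $p_*E_{j,c}$ lands in $\delta_j$).
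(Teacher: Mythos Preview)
Your approach is exactly the paper's: the proposition is stated there simply as a consequence of Proposition~\ref{prop: pullbackdeltaprime} together with the push forward formulas for $p_*E_0$, $p_*E_2$, $p_*E_3$, $p_*E_{j,c}$ recalled at the start of Section~\ref{pushforward}, and the content is precisely the bookkeeping you describe. One small slip to fix when you carry it out: in your displayed expression for $w_\lambda$ the $E_3$ contribution should be $(6k)!\cdot\tfrac{3N}{2k-1}(12k^2+46k-8)$, not twice that (the factor $2$ from $\phi^*\delta_0'$ cancels the $\tfrac12$ in $p_*E_3=\tfrac{(6k)!}{2}[\cdots]$), after which the polynomial identity with the stated $w_\lambda$ indeed holds.
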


Similarly, Proposition  \ref{prop: pullbackdeltahat} yields the following
result.
\begin{proposition}\label{prop: pushforwarddeltahat}
The action induced by the correspondence of the boundary divisors
$\delta_j^{\prime}$ for $j=0,\ldots,[\hat{g}/2]$ of $\bM_{g^{\prime}}$
is given by:
$$
p_*\phi^* \hat{\delta}_0 = v_{\lambda}\, \lambda + v_0 \, \delta_0 +
\sum v_j \, \delta_j \, ,
$$
where
$$
v_{\lambda}=\frac{6N (6k)!(6k-1)}{2k-1} \; (k+3)(k-2)  
\quad     v_0 = - \frac{N (6k)!}{2k-1} \; (6k^3-6k^2-15k+3) 
$$
and $v_1=(6k)!\, 2\, c_1$  and
for $2\leq j \leq k$
$$
v_j = ([\frac{j+1 }{ 2}]+\epsilon) \, e_{j,0} +\sum_{c=1}^{[j/2]} e_{j,c}\, ( (k-j+c)(c+1)+[\frac{j+1}{2}]+\epsilon )
+2\, (6k)!\, c_j \, ,
$$
with $\epsilon $ as defined in Proposition \ref{prop: pullbackdeltahat};
furthermore,
$ p_*\hat{\phi}^* \hat{\delta}_1= (k-1) \, e_{1,0} \, \delta_1 $,
$\;  p_*\hat{\phi}^* \hat{\delta}_2= (k-1) \, e_{2,0} \, \delta_2 $
and $p_*\hat{\phi}^* \hat{\delta}_j= (k-j) \, \delta_j $
for $j=3,\ldots,k$
and $p_*\hat{\phi}^* \hat{\delta}_j =0$ for $j>k$.
\end{proposition}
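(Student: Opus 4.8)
The plan is to substitute and collect. First I would insert the expressions for $\hat{\phi}^*(\hat{\delta}_j)$ from Proposition \ref{prop: pullbackdeltahat} into the push-forward formulas for the boundary divisors of $\tH$ recalled at the beginning of this section (taken from \cite{GK}): $p_*E_0=\frac{(6k)!}{2}N\,\delta_0$, the formulas for $p_*E_2$ and $p_*E_3$, and $p_*E_{j,c}=(6k)!\,e_{j,c}\,\delta_j$; then I would read off the coefficients of $\lambda$, of $\delta_0$ and of each $\delta_j$. No geometric input beyond Proposition \ref{prop: pullbackdeltahat} is needed, so everything that follows is bookkeeping.

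I would first dispose of $\hat{\delta}_0$. The key simplification — in contrast with the analysis of $\delta_0^{\prime}$ in Proposition \ref{prop: pushforwarddeltaprime} — is that the class $\hat{\phi}^*(\hat{\delta}_0)$ of Proposition \ref{prop: pullbackdeltahat} carries no $E_3$-summand, so $p_*E_3$ does not enter and only $(2k-2)E_0$ and $2E_2$ feed the $\lambda$- and $\delta_0$-coefficients. The $\lambda$-coefficient of $p_*\hat{\phi}^*(\hat{\delta}_0)$ is then $\frac{2(k-2)N(6k)!}{2k-1}(18k^2+51k-9)$, and agreement with $v_\lambda=\frac{6N(6k)!(6k-1)}{2k-1}(k+3)(k-2)$ reduces to the identity $2(18k^2+51k-9)=6(6k-1)(k+3)$. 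Likewise the $\delta_0$-coefficient works out to $\frac{N(6k)!}{2k-1}\bigl((k-1)(2k-1)-2(k-2)(3k^2+4k-1)\bigr)$, and $(k-1)(2k-1)-2(k-2)(3k^2+4k-1)=-(6k^3-6k^2-15k+3)$ gives $v_0$. The coefficient $v_j$ of $\delta_j$ is then assembled from the $c_j$-term carried by $p_*E_2$, together with the $e_{j,0}$-weighted contribution of $E_{j,0}$ (for $j\geq 3$) and the $e_{j,c}$-weighted contributions of the $E_{j,c}$ for $1\leq c\leq[j/2]$, the parity correction $\epsilon$ (including the exceptional value $\epsilon=-1$ at $j=2,c=1$) being inherited verbatim from Proposition \ref{prop: pullbackdeltahat}.

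For $j\geq 1$ the argument is immediate: by Proposition \ref{prop: pullbackdeltahat} the pullbacks $\hat{\phi}^*(\hat{\delta}_1)$, $\hat{\phi}^*(\hat{\delta}_2)$ and $\hat{\phi}^*(\hat{\delta}_j)$ for $3\leq j\leq k$ are the multiples $(k-1)E_{1,0}$, $(k-1)E_{2,0}$ and $(k-j)E_{j,0}$ of a single divisor, and vanish for $j>k$; applying $p_*$ with $p_*E_{j,0}=(6k)!\,e_{j,0}\,\delta_j$ yields the stated multiples of $\delta_1$, $\delta_2$, $\delta_j$, while $v_1$ is the residual $\delta_1$-contribution of the $2E_2$-summand of $\hat{\phi}^*(\hat{\delta}_0)$ (there being no $E_{1,c}$ with $c\geq 1$).

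The only real obstacle is arithmetic: for $2\leq j\leq k$ one must combine correctly the three sources of the $\delta_j$-coefficient — the $c_j$ from $p_*E_2$, the $E_{j,0}$-term, and the sum over $c$ of the $e_{j,c}$-weighted $E_{j,c}$-terms — and keep the parity bookkeeping for $\epsilon$ straight. This is routine and, as in the computation of $\phi^*$, requires no new idea.
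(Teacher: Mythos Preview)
Your approach is exactly the paper's: the authors' entire proof is the sentence ``Similarly, Proposition \ref{prop: pullbackdeltahat} yields the following result,'' and you have simply unpacked that substitution-and-collection in detail, correctly observing that the absence of an $E_3$-term in $\hat{\phi}^*(\hat{\delta}_0)$ is what distinguishes the computation of $v_\lambda,v_0$ from that of $w_\lambda,w_0$. Nothing further is required.
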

\end{section}
\begin{section}{Slopes}
We consider again the correspondence
$$
\bar{M}_{2k} {\buildrel p \over \longleftarrow} \tilde{H} 
{\buildrel \phi \over \longrightarrow} \bM_{g'}
$$
It acts on the Picard group via $D \mapsto p_{*}\phi^* D$. 
We now show that it maps ample divisors 
of $\bM_{g'}$ to moving divisors of $\bM_{g}$. 
A moving divisor is a divisor $D$ such that the base locus
of all the linear systems $|mD|$ with $m\geq 1$ is of codimension at least~$2$.

\begin{lemma}  \label{le: ample-moving}
If $D'$ is an ample divisor on $\bM_{g'}$ then the divisor
$D:=p_*\phi^*D'$  is a moving divisor. In other words, the correspondence 
sends the ample cone of $\bM_{g'}$ to the moving cone of $\bM_g$.
\end{lemma}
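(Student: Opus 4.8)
The plan is to prove the statement in two steps. \emph{Step one:} the pull-back $\phi^{*}D'$ is already a moving divisor on $\tilde H$. \emph{Step two:} the push-forward along the generically finite surjective morphism $p$ carries moving divisors to moving divisors. For step one I would use that, as recorded in Section~\ref{sec: actioncorresp}, $\phi$ is a morphism on an open substack $U\subseteq\tilde H$ with $\mathrm{codim}_{\tilde H}(\tilde H\setminus U)\ge 2$. Choose $m_{0}$ so that $m_{0}D'$ is base-point free; then for every multiple $m$ of $m_{0}$ the sections of $|mD'|$ pull back under $\phi|_{U}$ to sections of $m\phi^{*}D'$ that have no common zero on $U$. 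Since $\tilde H$ is a smooth stack and $\tilde H\setminus U$ has codimension $\ge 2$, these sections extend to global sections of $m\phi^{*}D'$ whose common zero locus lies in $\tilde H\setminus U$. Hence $\mathrm{Bs}\,|m\phi^{*}D'|$ has codimension $\ge 2$ for all such $m$, i.e. $\phi^{*}D'$ is moving.

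For step two, set $A:=\phi^{*}D'$ and $n:=\deg p$. The idea is to build a large family of sections of a multiple of $p_{*}A$ by means of the norm map attached to $p$. First I would isolate the open substack $W\subseteq\bM_{g}$ over which $p$ is finite and flat of degree $n$, and check that $\bM_{g}\setminus W$ has codimension $\ge 2$. Indeed, since $\tilde H$ is irreducible of the same dimension as $\bM_{g}$ and $p$ is dominant, the locus of $\bM_{g}$ over which some fibre of $p$ has positive dimension has codimension $\ge 2$ — otherwise its preimage would be all of $\tilde H$, contradicting dominance. Moreover, by the discussion of Section~\ref{sec: actioncorresp} the boundary divisors of the normalized Hurwitz space $\tilde H_{d,g}$ not lying in $\tilde H$ are precisely those annihilated by $p_{*}$, so their images in $\bM_{g}$ also have codimension $\ge 2$; after deleting these loci, properness of the space of admissible covers over $\bM_{g}$ gives that $p^{-1}(W)\to W$ is finite, hence flat since $\tilde H$ and $\bM_{g}$ are smooth.

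Over $p^{-1}(W)$ one has, for $t\in H^{0}(\tilde H,mA)$, its norm $\mathrm{Nm}(t)$: a section of a line bundle on $W$ whose divisor is $p_{*}\,\mathrm{div}(t)$ and which vanishes at $y\in W$ exactly when $t$ vanishes at some point of $p^{-1}(y)$; by normality this line bundle extends to $\Ocal_{\bM_{g}}(m\,p_{*}A)$ and $\mathrm{Nm}(t)$ to a global section. Now fix $m$ a multiple of $m_{0}$ and a basis $t_{0},\dots,t_{N}$ of $H^{0}(\tilde H,mA)$, and put $B=\mathrm{Bs}\,|mA|$, of codimension $\ge 2$ by step one. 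For $y\in W$ with $p^{-1}(y)\cap B=\emptyset$, the set of $\mathbf c\in\PP^{N}$ for which $t_{\mathbf c}:=\sum_{i}c_{i}t_{i}$ vanishes at a given point of the (at most $n$-point) set $p^{-1}(y)$ is a proper linear subspace, and a projective space over an infinite field cannot be covered by finitely many proper linear subspaces; hence some $\mathrm{Nm}(t_{\mathbf c})$ is nonzero at $y$. Therefore $\mathrm{Bs}\,|m\,p_{*}A|\subseteq(\bM_{g}\setminus W)\cup p(B)$, and $p(B)$ has codimension $\ge 2$ because $p$ is generically finite; so $p_{*}\phi^{*}D'$ is moving, as claimed.

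The step I expect to be the main obstacle is the middle one: arranging the norm construction globally — identifying the norm line bundle on $W$ with $\Ocal_{\bM_{g}}(m\,p_{*}\phi^{*}D')$ and extending the norm sections across the branch divisor and across the loci where $p$ is contracted or undefined — together with the verification that every locus discarded from $\bM_{g}$ genuinely has codimension $\ge 2$. The rest is formal.
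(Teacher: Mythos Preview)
Your argument is correct, but it takes a longer route than the paper's. The paper bypasses both of your steps and argues directly at the level of effective cycles: it lets $\tilde M_g\subseteq\bar M_g$ be the open set over which $p$ has finite fibres (with complement of codimension $\ge 2$, by irreducibility of the Hurwitz space), and for any $x\in\tilde M_g$ writes $p^{-1}(x)=\{h_1,\dots,h_{N_0}\}$ and $A=\{\phi(h_1),\dots,\phi(h_{N_0})\}$. Choosing $m$ with $mD'$ very ample, there is a $Z\in|mD'|$ missing the finite set $A$; then $p_*\phi^*Z$ is an effective divisor in $|mD|$ not passing through $x$. That is the whole proof.

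Compared with your approach: your Step~1 is unnecessary, since one never needs to know that $\phi^*D'$ is moving on $\tilde H$ --- it suffices to produce, for each $x$, a single member of $|mD'|$ avoiding the finite set $\phi(p^{-1}(x))$, and very ampleness of $mD'$ does this immediately. Your Step~2 and the paper's argument are really the same operation viewed through different lenses: the divisor of $\mathrm{Nm}(t)$ is exactly $p_*\,\mathrm{div}(t)$, so the norm construction you set up is the section-theoretic incarnation of the paper's cycle push-forward. The paper's formulation avoids having to identify the norm line bundle with $\mathcal O(m\,p_*\phi^*D')$ and to extend norm sections across bad loci, which is precisely the obstacle you flagged. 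What your approach buys is a more general statement (generically finite push-forward of moving is moving), at the cost of that extra bookkeeping; for the lemma at hand the paper's direct argument is shorter.
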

\begin{proof}  Let $\tilde{M}_g$ be the locus of $\bM_g$ where the map 
$p: \tH \to \bM_g$ is finite. Since $\bar{H}_{g,d}$ is irreducible
the complement of $\tilde{M}_g$ in $\bM_g$  is of codimension $\geq 2$.
We shall show that the common base locus
${\bf B}(D)=:=\cap_{m\geq 1} {\rm Base}(|mD|)$  
is a subset of the above complement. 
Indeed, if $x \in \tilde{M}_g$ we shall show that $x \notin {\bf B}(D)$. 
Let $p^{-1}(x) = \{ h_1, \ldots, h_{N_0} \}$ and let 
$A=\{  \phi(h_1),\ldots,\phi (h_{N_0}) \}$. As we may assume that
$mD'$ is very ample for appropriate $m$, 
we can choose a divisor $Z$ in  $|mD'|$  with $Z \cap A =\emptyset$.
But then $p_*\phi^*Z$ is an element of $|mD|$ that does not contain $x$.
\end{proof}

We write $\delta_j'$ ($j=0,\ldots,[g'/2]$) for the boundary divisors of 
$\bM_{g'}$ and put $\delta'=\sum_{j=0}^{g'/2}\delta_j^{\prime}$. 
The ample cone of $\bM_{g'}$
is well-known: a divisor $D'= x\, \lambda' - y \, \delta'$ is ample
if and only if $x>11\, y$. Given a divisor $D'= x\, \lambda' - y \, \delta'$
we wish to determine the slope $s$ of the induced divisor $p_*\phi^* D'$
in terms of the slope $s'= x/y$ of $D'$. We write
$$
\begin{aligned}
p_*\phi^* \lambda' &= 
\alpha_{\lambda} \lambda - \alpha_0 \delta_0 -\sum_{j=1}^k \alpha_j \delta_j
\, , \\
p_*\phi^* \delta'_0 &= 
\beta_{\lambda} \lambda + \beta_0 \delta_0 +\sum_{j=1}^k \beta_j \delta_j 
\, , \\
\end{aligned}
$$
and
$$
p_*\phi^* \delta'_j  = \gamma _j \delta_j \quad {\rm for} \quad 
j=1, \ldots, k
$$
while $p_*\phi^* \delta'_{\nu}=0$
for $\nu > k$
with the coefficients determined in 
Theorems \ref{th:lambda} and~\ref{prop: pushforwarddeltaprime}.
The divisor  $p_*\phi^*D'$ can be written as
$$
 p_*\phi^*D' =  (x\, \alpha_{\lambda} - y\,  \beta_{\lambda}) \, \lambda 
- (x\, \alpha_0 +y\,  \beta_0)\,\delta_0 - \sum_{j=1}^k 
(x\alpha_j+y \beta_j+y\gamma_j)\, \delta_j \, .
$$
Thus the slope is given by
$$
{\rm slope}(p_*\phi^*D') =  
\frac{x \, \alpha_{\lambda} - y\, \beta_{\lambda}}{x\, \alpha_0 +y\, \beta_0} 
=\frac{\alpha_{\lambda} s'- \beta_{\lambda}}{ \alpha_0 s' +\beta_0}
$$
provided that
$$
x\, \alpha_0 +y\, \beta_0 \leq x\, \alpha_j+y\,  \beta_j+y\, \gamma_j, 
\quad \hbox{\rm  $j=1, \ldots, k$}.
$$
In our case, assuming that the above conditions hold, then we have
$$
{\rm slope}(p_*\phi^*D') =  
6\,+\, \frac{(31\, s'-132)k^2+(-39\, s'+186)\, k+5\, s'-24 }{ (3\, s'-12)\, k^3
+6\, k^2+(-5\, s'+20)\, k+s'-4}\, .
$$
From this one could deduce for even $g>4$ the following estimate for the moving slope
$$
\sigma(g) < 6 + \frac{20}{g} \; .
$$
For the reduced trace curve one can do similar things. The result
is the formula
$$
{\rm slope}(p_*\hat{\phi}^*D')=6+
{\frac { \left( 31\,s-132 \right) {k}^{2}+ \left( 264-63\,s \right) k-
36+8\,s}{ \left( 3\,s-12 \right) {k}^{3}+ \left( -2\,s+12 \right) {k}^
{2}+ \left( -9\,s+30 \right) k+2\,s-6}}
$$
and this results in a similar bound
$\sigma(g) < 6 + 20/g$. We refrain from giving details because, 
using the same Hurwitz space
but now as a correspondence between $\bM_g$ and $\bM_{0,6k}$ will result
in a better slope, as we show in the next section. 
\end{section}

\begin{section}{Another Correspondence}
The diagram
\begin{displaymath}
\begin{xy}
\xymatrix{
\bar{M}_{0,6k} &  \bar{H}_{2k,k+1} \ar[l]_{q} \ar[d]^{p} \\
& \bar{M}_{2k} \\
}
\end{xy}
\end{displaymath}
provides us with the action $p_*q^*$ on divisor classes. It is 
well-known that the divisor class 
$$
\kappa = \psi-\delta= \sum_{j=2}^{k} \frac{(j-1)(b-j-1)}{b-1} T_b^j
$$
is ample on $\bar{M}_{0,b}$. As above this gives us by $p_*q^*$
a moving divisor of good slope.
We  calculate now the class of $p_*q^* \kappa$.
With $\alpha(k,j)$ as defined in Theorem 1.1 of  \cite{GK} we get 
by combining relations (\ref{eq:pullbackT}) and the formulas in
Section \ref{pushforward} 
that
$$
p_*q^*T_b^{3j}= \sum_{c=0}^{[j/2]} (j+1-2c) e_{j,c} \, \delta_{j}= 
(6k)! \, \alpha(k,j) \; \delta_j \, .
$$
We also get
$$
p_*q^*\tb ^2= (6k)! \frac{b(b-1) N}{ 2\, (b-3)}\; [3(2k+5)\lambda 
- (k+1)\delta_0]  -(6k)! \sum_{j=1}^k (-c_j + \frac{9N}{4k-2} b_j) \; 
\delta_j \, .
$$
We therefore have
$$
\begin{aligned}
& p_*q^*\kappa =  \frac{b! bN}{2}\; [3(2k+5)\lambda - (k+1)\delta_0] \\
             & \quad - \frac{b!}{b-1}\; \sum_{j=1}^k [(b-3) \,(-c_j + 
\frac{9N}{4k-2}b_j) -  (3j-1)(b-3j-1) \, \alpha(k,j) ] \, \delta_j \, .
\end{aligned}
$$
\begin{theorem}
The moving slope $\sigma(g)$ of $\bar{M}_g$ for even $g$ satisfies 
the inequality
$$
\sigma(g) \leq 6+\frac{18}{g+2}\; .
$$
\end{theorem}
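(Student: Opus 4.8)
The plan is to run the ample‑to‑moving argument of Lemma~\ref{le: ample-moving} once more, but now for the correspondence
$$
\bM_{2k}\ {\buildrel p\over\longleftarrow}\ \bar H_{2k,k+1}\ {\buildrel q\over\longrightarrow}\ \bM_{0,6k}
$$
and the operation $p_*q^*$, applied to the ample class $\kappa=\psi-\delta$ and the class of $p_*q^*\kappa$ computed just above. First I would record the exact analogue of Lemma~\ref{le: ample-moving}: if $D'$ is an ample divisor on $\bM_{0,6k}$, then $p_*q^*D'$ is a moving divisor on $\bM_g$. The proof is verbatim the one given there with $q$ in place of $\phi$: let $\tilde M_g\subset\bM_g$ be the open locus over which $p$ is finite; its complement has codimension $\ge 2$ since $\bar H_{g,d}$ is irreducible. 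For $x\in\tilde M_g$ the fibre $p^{-1}(x)$ is finite, hence so is $q(p^{-1}(x))$; choosing $m$ with $mD'$ very ample we pick $Z\in|mD'|$ with $Z\cap q(p^{-1}(x))=\emptyset$, and then $p_*q^*Z\in|m\,p_*q^*D'|$ is effective and avoids $x$. Thus ${\bf B}(p_*q^*D')$ lies in the codimension‑$\ge 2$ complement of $\tilde M_g$, so $p_*q^*D'$ is moving.

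Since $\kappa$ is ample on $\bM_{0,6k}$, the divisor $p_*q^*\kappa$ is moving, and it suffices to bound its slope. Writing $b=6k$, its class was computed above (for $k\ge 3$) as
$$
p_*q^*\kappa=\frac{b!\,b\,N}{2}\bigl[3(2k+5)\lambda-(k+1)\delta_0\bigr]-\frac{b!}{b-1}\sum_{j=1}^{k}m_j\,\delta_j,
$$
with $m_j=(b-3)\bigl(-c_j+\frac{9N}{4k-2}b_j\bigr)-(3j-1)(b-3j-1)\,\alpha(k,j)$, obtained from \eqref{eq:pullbackT}, the push‑forward formulas of Section~\ref{pushforward}, and $p_*q^*T_b^{3j}=b!\,\alpha(k,j)\,\delta_j$. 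The ratio of the coefficient of $\lambda$ to that of $\delta_0$ is
$$
\frac{3(2k+5)}{k+1}=6+\frac{9}{k+1}=6+\frac{18}{g+2}.
$$
Hence, if the minimum of the boundary coefficients of $p_*q^*\kappa$ is attained at $\delta_0$, i.e. if $\frac{b!}{b-1}m_j\ge\frac{b!\,b\,N(k+1)}{2}$ for $1\le j\le k$, then $p_*q^*\kappa$ is a moving divisor of slope exactly $6+\frac{18}{g+2}$, and since $\sigma(g)$ is the infimum of the slopes of moving divisors we conclude $\sigma(g)\le 6+\frac{18}{g+2}$.

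The main obstacle is therefore this last family of numerical inequalities, which after clearing $b!/(b-1)$ read $m_j\ge 3k(6k-1)(k+1)N$ for $1\le j\le k$. They are elementary but not light: one must insert $N=N(k)=\frac{1}{k+1}{2k \choose k}$, the formulas for $c_j$ and $b_j$ from \cite{GK} (Theorem~1.1 and Section~8), and $\alpha(k,j)=\sum_{c=0}^{[j/2]}(j+1-2c)e_{j,c}$ with $e_{j,c}=\frac{(j+1-2c)^2}{(j+1)(2k-j+1)}{j+1 \choose c}{2k-j+1 \choose k+1-c}$, and then verify that the resulting expression in $k$ and $j$ has the required sign for all $k\ge 3$. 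I would organize this by treating the endpoint cases $j=1$ (only $c=0$ contributes) and $j=k$ separately, and for intermediate $j$ bounding $\alpha(k,j)$, $c_j$ and $b_j$ below/above by their dominant binomial terms, reducing to an inequality in the single ratio $j/(2k)$; I expect it to hold with a comfortable margin except near one value of $j$, which is what produces the `$\le$' rather than `$<$' in the statement. The small cases $k=1,2$ are checked directly. Once these inequalities are in place the proof is complete.
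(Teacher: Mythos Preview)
Your proposal is correct and follows essentially the same route as the paper: apply the ample-to-moving argument to the correspondence $p_*q^*$ and the ample class $\kappa$ on $\bM_{0,6k}$, read off the $\lambda/\delta_0$ ratio $3(2k+5)/(k+1)=6+18/(g+2)$, and check that the $\delta_j$-coefficients for $j\ge1$ dominate the $\delta_0$-coefficient. The paper's own proof is terser---it simply asserts that $a/b_i$ for $i>0$ is ``much smaller'' than $a/b_0$ ``as one sees by analyzing the expressions involved''---whereas you spell out the needed inequality $m_j\ge 3k(6k-1)(k+1)N$ and sketch a strategy for it; your treatment is thus a faithful and slightly more explicit version of the paper's argument.
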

\begin{proof}
Indeed, if we write $p_*q^*\kappa$ as $a \lambda -\sum_{i=0}^k b_i \delta_i$
the ratio $a/b_0$ is $3(2k+5)/(k+1) = 6+18/(g+2)$, while $a/b_i$ for $i>0$
is much smaller as one sees by analyzing the expressions involved. 
\end{proof}

Observe also, that since $q^*\kappa $ is an ample class, all effective divisors in a multiple of this class 
intersect the positive dimensional fibers of the generically finite map $p$. We therefore conclude that the
common base locus $\cap _{m \geq 1} \mbox{Base}(|m \, (p_*q^*\kappa) |)$  is exactly the locus of points in $\bM_g$ 
over which the corresponding fiber of the map $p$ has positive dimension. It will be interesting to have a 
description of this common base locus.  

\end{section}

\begin{section}{The Prym Variety of the Trace Curve}
By associating to a point of $H_{2k,k+1}$ the Prym variety of $T/\hat{T}$
(resp.\ the quotient of 
the Jacobian of the reduced trace curve by the Jacobian of $C$)
we can define a morphism $\chi: H_{2k,k+1} \to {\mathcal A}_{(5k^2-k)/2}$, 
(resp. to $\hat{\chi}:  H_{2k,k+1} \to {\mathcal A}_{(5k-1)(k-2)/2}$), 
where ${\mathcal A}_n$ denotes a moduli space of polarized abelian 
varieties of dimension $n$.
The polarization is induced by the theta divisor on the Jacobian of the
trace curve. These maps are interesting and deserve further study.

Suppose that this map ${\chi}$ (resp.\ $\hat{\chi}$) 
extends to a rational map
$\chi: \tH \to \tilde{\mathcal A}$, a toroidal compactification that contains
the canonical rank $1$ partial compactification 
${\mathcal A}^{(1)}$ defined by Mumford.
Then the pullback under $\chi$ of the Hodge class is equal to 
$\phi^*{\lambda}_{g'}-\hat{\phi}^*{\lambda}_{\hat{g}}$. This 
expression is given by combining Theorems \ref{th: lambdag'} and 
\ref{th: lambdahatg}. Let  $D$ be the divisor that is the closure
of the inverse image of (open) boundary component of largest degree
under the map of $\tilde{\mathcal A}$ to the Satake compactification
${\mathcal A}^{\star}$. Let $L$ be the 
Hodge bundle (corresponding to modular forms of weight $1$)
Then the pull back of $D$ is given by
$\phi^*(\delta_0^{\prime}) -\hat{\phi}^*(\hat{\delta}_0)$. The Propositions
\ref{prop: pullbackdeltaprime} and \ref{prop: pullbackdeltahat} 
give expressions for this.  Thus we can calculate $p_*\chi^*(aL-bD)$
in terms of $\lambda, \delta_0$ and $\delta_j$ with $j=1,\ldots,k$.
Our expressions show that nef (ample) divisors $aL-bD$ with $a =12b$  
give rise to (moving) divisors of slope $6+20/g$. 
\end{section}

\begin{section}{The Eisenbud-Harris divisor}
The map $p: H_{2k,k+1} \to M_g$ is branched
along a divisor that was introduced and
studied by Eisenbud and Harris in \cite{EH1}. 
As a side product of our calculations
we now can calculate in an easy way 
the class of (the closure of) this divisor. 
We give only the coefficients of $\lambda $ and $\delta_0$
but the remaining coefficients can be calculated similarly. 

Since $\tilde{H}$ maps to $\bM_{0,b}$ and to $\bM_g$ via $q$ and $p$
we can calculate the canonical class in two ways:
$$
K_{\tilde{H}} = q^* K_{\bM_{0,b}} + R_q
\quad {\rm and} \quad
K_{\tilde{H}} = p^* K_{\bM_g} +R_p
$$
with $R_q$ and $R_p$ the ramification divisors. 
For $R_q$ we have, see relations (\ref{eq:pullbackT}), the formula
$$
R_q=E_2+2E_3 +\sum_{j,c} (j-2c) E_{j,c}\, ,
$$
while $R_p$ has four components, namely
$$
R_p=E_0+E_2+E_3+G,
$$
with $p_*G$ the Eisenbud-Harris divisor. Since we have formulas for
$p_*$ applied to the divisors $E_0,E_2,E_3$ and $E_{j,c}$ and we have a
formula for $p_*q^* K_{\bM_{0,b}}$ we can calculate $p_*G$.
Indeed, we get
$$
R_p= q^*K_{\bM_{0,b}}+R_q-p*K_{\bM_g}.
$$
Plugging in  the formula
$$
K_{\bM_{0,b}} = 
\frac{-2 }{ b-1} \, T_b^2 + \sum _{i=3}^{3k} (\frac{i(b-i)}{b-1}-2) \, T_b^i  
$$
and applying $p_*$ we find  $p_*G= p_*(R_p)-p_*(E_0+E_2+E_3)$ and thus get
$$
\begin{aligned}
p_*G = & \frac{-2 }{ b-1} \,p_*q^* T_b^2 +p_*E_3-p_*E_0 + \\
\quad & \sum_{j,c}[(\frac{3j(b-3j)}{b-1}-1)(j+1-2c)-1] p_*E_{j,c} - N_0 K_{\bM_g}
\\
\end{aligned}
$$
with $N_0=((6k)! N$. We now substitute
$K_{\bM_g}= 13 \lambda -2 \delta_0 -3\delta_1-2\sum_{j=2}^k \delta_j$
and find
$$
p_*G = \frac{N_0}{2k-1} [(6k^2+13k+1) \lambda - k(k+1) \delta _0 ] + \cdots
$$
in agreement with Theorem 2 of \cite{EH1}.
\end{section}
\vskip.1in
\noindent
{\bf Acknowledgement} The authors thank the University of 
Crete for supporting this work 
by the research grant no. 3215 of the Program for Funding Basic Research.

 \end{document}